\newtheorem*{Thm}{Theorem}
\newtheorem{thm}[subsection]{Theorem}
\newtheorem{defn}[subsection]{Definition}
\newtheorem*{Defi}{Definition}
\newtheorem*{Remark}{Remark}
\newtheorem{claim}[subsection]{Claim}
\newtheorem{corr}[subsection]{Corollary}
\newtheorem{lemma}[subsection]{Lemma}
\newtheorem{remark}[subsection]{Remark}
\newtheorem{Conv}[subsection]{Convention}
\theoremstyle{definition}
\newtheorem{example}[subsection]{Example}
\newcommand{\cat}{\mathcal}
\newcommand{\lra}{\longrightarrow}
\newcommand{\llra}[1]{\stackrel{#1}{\lra}}
\newcommand{\R}{\mathbb R}
\newcommand{\Z}{\mathbb Z}
\newcommand{\F}{\mathbb F}
\newcommand{\C}{\mathbb C}
\newcommand{\I}{{\cat I}}
\newcommand{\Jj}{{\cat J}}
\newcommand{\sBC}{s\mathscr{B}}
\newcommand{\sBCU}{s\mathscr{B}_\infty}
\newcommand{\sNBC}{s\mathscr{B}}
\newcommand{\BC}{\mathscr{B}}
\newcommand{\sBS}{s\cat B\cat Sh}
\newcommand{\BS}{\cat B\cat Sh}
\newcommand{\BSa}{\cat Bt\cat S}
\newcommand{\sBSa}{s\cat Bt\cat S}
\newcommand{\Sh}{\cat Sh}
\DeclareMathOperator{\Map}{Map}
\DeclareMathOperator{\Br}{Br}
\DeclareMathOperator{\GL}{GL}
\DeclareMathOperator{\Gr}{Gr}
\DeclareMathOperator{\E}{E}
\DeclareMathOperator{\Ho}{\mbox{H}}
\DeclareMathOperator{\SU}{U}
\newfont{\german}{eufm10}
  \DeclareMathOperator{\hocolim}{hocolim}
\newcommand\qu{/\kern-.7ex/}
\begin{document}
\pagestyle{plain}

\title
{Symmetry Breaking and Link homologies I }
\author{Nitu Kitchloo}
\address{Department of Mathematics, Johns Hopkins University, Baltimore, USA}
\email{nitu@math.jhu.edu}
\thanks{The author acknowledges support by the Simons Foundation and the Max Planck Institute for Math. during the development of these ideas.}
\maketitle

\begin{abstract}
Given a compact connected Lie group $G$ endowed with root datum, and an element $w$ in the corresponding Artin braid group for $G$, we describe a filtered $G$-equivariant stable homotopy type, up to a notion of quasi-equivalence. We call this homotopy type {\em Strict Broken Symmetries}, $\sBC(w)$. As the name suggests, $\sBC(w)$ is constructed from the stack of principal $G$-connections on a circle, whose symmetry is broken between consecutive arcs in a manner prescribed by a  presentation of $w$. We show that $\sBC(w)$ is independent of the choice of presentation of $w$, and also satisfies Markov type properties. Specializing to the case of the unitary group $G=\SU(r)$, these properties imply that $\sBC(w)$ is an invariant of the link $L$ obtained by closing the $r$-stranded braid $w$. As such, we denote it by $\sBC(L)$. In the follow up to this article \cite{Ki2}, we will show that the construction of strict broken symmetries allows us to incorporate twistings. Under suitable conditions, $\SU(r)$-equivariant (twisted) cohomology theories $\E_{\SU(r)}$ applied to $\sBC(L)$ give rise to a spectral sequence of link invariants converging to $\E_{\SU(r)}^\ast(\sBCU(L))$, where $\sBCU(L)$ is the direct limit of the filtration of $\sBC(L)$. For instance, in \cite{Ki2} we describe Triply-graded link homology as the $E_2$-term in the spectral sequence one obtains on applying Borel-equivariant singular cohomology $\Ho_{\SU(r)}$ to $\sBC(L)$. We also study a universal twist of $\Ho_{\SU(r)}$. Here one recovers $sl(n)$-link homologies for any value of $n$ (depending on the choice of specialization of the universal twist).  
\end{abstract}

\tableofcontents

\section{Introduction}

\noindent
The main result of this article is the construction of a filtered $\SU(r)$-equivariant stable homotopy type $\sBC(L)$ for links $L$ that can be described as the closure of $r$-stranded braids, namely, elements of the braid group $\Br(r)$. We call this spectrum the spectrum of {\em strict broken symmetries} because it is built from the stack of principal $\SU(r)$-connections on a circle endowed with reductions of the structure group to the maximal torus at various points on the circle, and prescribed holonomy subgroups along the interpolating arcs. We will elaborate on this point presently. Even though we have invoked the category of equivariant spectra, for links $L$ that can be expressed as the closure of a positive braid, our spectrum $\sBC(L)$ can be described entirely by the geometry of an underlying $\SU(r)$-equivariant {\em space} of strict broken symmetries. 

\medskip
\noindent
For the convenience of non-experts, all the results in the introduction will be formulated for links given by the closure of a positive braid, with the general result for arbitrary braids described in later sections. We also point out that several results in this article will be shown to hold for arbitrary compact connected Lie groups $G$. We have chosen to highlight the case $G = \SU(r)$ in the introduction because of the natural interpretation of our results in terms of link homology. Broadly speaking, the interpretation of our homotopy type in terms of Khovanov-Rozansky homology as constructed in \cite{J2} can be described as follows. Details can be found in \cite{Ki2}. 

\medskip
\noindent
Given a link $L$ that is expressed by the closure of a braid diagram, Khovanov-Rozansky homology of $L$ is traditionally described by first constructing a triply-graded complex $\mathscr{C}(L)$ with three commuting differentials $d_+, d_{-}, d_v$. The two differentials $d_+$ and $d_-$ are known as the ``Matrix Factorization" differentials, and the remaining differential $d_v$ is known as the ``cubical" or "resolution of crossings" differential. With the above setup, the Khovanov-Rozansky triply graded homology of the link $L$ is defined as the successive homology $\Ho(\Ho(\mathscr{C}(L), d_+), d_v)$. Similarly, the Khovanov-Rozansky $sl(n)$-link homology of $L$ is defined as the successive homology $\Ho(\Ho(\mathscr{C}(L), d_++d_{-}), d_v)$, where the definition of the differential $d_-$ depends on $n$. 

\medskip
\noindent
Ignoring gradings for now, we may recover the above algebraic framework using our homotopy type $\sBC(L)$ as follows. The filtration on $\sBC(L)$ gives rise to an associated graded equivariant homotopy type $\Gr(\sBC(L))$ (see section \ref{Filteredalgebra}). In \cite{Ki2} we prove that the equivariant singular cohomology of $\Gr(\sBC(L))$ is isomorphic to $\Ho(\mathscr{C}(L), d_+)$. Furthermore, by virtue of being an associated graded object, $\Gr(\sBC(L))$ admits a differential $\partial$ in the homotopy category. The induced differential $\partial^\ast$ on $\Ho^\ast_{\SU(r)}(\Gr(\sBC(L))$ can be identified with $d_v$. Hence we recover the Khovanov-Rozansky triply graded link homology of $L$ entirely in terms of the equivariant singular cohomology of the filtered object $\sBC(L)$. 

\medskip
\noindent
A final piece of topological structure (see below) that we describe in detail in \cite{Ki2} is a ``derived local system" on $\sBC(L)$ which allows us to define a version of \textit{twisted} equivariant singular cohomology. This twisting gives rise to a differential on the equivariant cohomology of $\sBC(L)$ which we conjecture in \cite{Ki2} to be the differential $d_-$ in terms of the above identification. This conjecture has recently been proven by T. Mej\'{i}a Gomez \cite{Go}, thereby showing how our homotopy type $\sBC(L)$ can also be used to recover Khovanov-Rozansky $sl(n)$-link homology in terms of the twisted equivariant cohomology of $\sBC(L)$. 

\medskip
\noindent
The above discussion hopefully motivates why one is justified in thinking of $\sBC(L)$ as an equivariant stable homotopy type for links. Even though we have described the evaluation of $\sBC(L)$ on equivariant singular cohomology, $\sBC(L)$ can be evaluated under arbitrary equivariant cohomology theories potentially giving rise to other interesting link homologies. In addition, the derived local system alluded to above is well know to twist many interesting equivariant cohomology theories (for instance equivariant K-theory). This is discussed further in \cite{Ki2} and remains an active direction of our research.

\medskip
\noindent
Before we proceed, let us say a few words about the category of $G$-spectra that will be used in this article. The main results of this article can be understood with very little background on equivariant spectra. It is helpful to bear in mind that $G$-spectra may be seen as a natural localization of the category of $G$-spaces where one is allowed to desuspend by arbitrary finite dimensional $G$-representations. As with $G$-spaces, one may evaluate $G$-spectra on $G$-equivariant cohomology theories. Given a subgroup $H < G$, one has restriction and induction functors defined respectively by considering a $G$-spectrum as an $H$-spectrum, or by inducing up an $H$-spectrum $X$ to the $G$-spectrum ${G_+} \wedge_H X$. As one would expect, the induction from $H$-spectra to $G$-spectra is left adjoint to restriction. For those somewhat familiar with the language, by an equivariant $G$-spectrum we mean an equivariant spectrum indexed on a complete $G$-universe \cite{LMS}. 

\medskip
\noindent
The spectra we study in this article are filtered by a finite increasing filtration $F_t X$. The associated graded object $\Gr_t(X)$ of such a spectrum has a natural structure of a chain complex in the homotopy category of $G$-spectra. In particular, one may define an {\em acyclic filtered $G$-spectrum} $X$ so that the associated graded object $\Gr_r(X)$ admits stable null homotopies. The notion of acyclicity allows us to define a notion of {\em quasi-equivalence} on our category of filtered $G$-spectra by demanding that two filtered $G$-spectra are equivalent if they are connected by a zig-zag of maps each of whose fiber (or cofiber) is acyclic. 

\medskip
\noindent
Returning to the main application of this article, we show that a braid $w$ on $r$-strands gives rise to a filtered equivariant $\SU(r)$-spectrum of strict broken symmetries, denoted by $\sBC(w)$, which is well defined up to quasi-equivalence. Before we get to the definition of strict broken symmetries, let us first offer a geometric description of the $\SU(r)$-spectrum of broken symmetries. Consider a braid element $w \in \Br(r)$, where $\Br(r)$ stands for the braid group on $r$-strands. For the sake of exposition, consider the case of a positive braid, i.e. one that can be expressed in terms of positive exponents of the elementary braids $\sigma_i$ for $i < r$. 
Let $I = \{ i_1, i_2, \ldots, i_k \}$ denote an indexing sequence with $i_j < r$, so that a positive braid $w$ admits a presentation in terms of the fundamental generators of the braid group $\Br(r)$, $w = w_I := \sigma_{i_1} \sigma_{i_1} \ldots \sigma_{i_k}$. Let $T$, or $T^r$ (if we need to specify rank), be the standard maximal torus, and let $G_i$ denote the unitary (block-diagonal) form in the reductive Levi subgroup having roots $\pm \alpha_i$. We consider $G_i$ as a two-sided $T$-space under the left (resp.\,right) multiplication. 

\medskip
\noindent
The equivariant $\SU(r)$-spectrum of broken symmetries is defined as the (suspension) spectrum corresponding to the $\SU(r)$-space $\BC(w_I)$ that is induced up from a $T$-space $\BC_T(w_I)$
\[ \BC(w_I) := \SU(r) \times_T ({G_{i_1}} \times_T {G_{i_2}} \times_T \cdots \times_T {G_{i_k}}) = \SU(r) \times_T \BC_T(w_I) , \]
with the $T$-action on $\BC_T(w_I) := (G_{i_1} \times_T G_{i_2} \times_T \cdots \times_T G_{i_k})$ given by endpoint conjugation
\[ t \, [(g_1, g_2, \cdots, g_{k-1}, g_k)] := [(tg_1, g_2, \cdots, g_{k-1}, g_kt^{-1})]. \]

\noindent
As mentioned above, the $\SU(r)$-stack $\SU(r) \times_T (G_{i_1} \times_T G_{i_2} \times_T \cdots \times_T G_{i_k})$ is equivalent to the stack of $\SU(r)$-connections on a circle with $k$ marked points, with the structure group being reduced to $T$ at the points, and the holonomy in $G_i$ along the $i$-th arc. 

\medskip
\noindent
Let us now describe the derived local system that allows for a twisting of our framework. Notice that each space of broken symmetries $\BC(w_I)$ admits a canonical map (given by composing the holonomies along the arcs) to the stack of principal connections on a circle, which is equivalent to the adjoint action of $\SU(r)$-action on itself
\[ \rho_I : \BC(w_I) \longrightarrow \SU(r), \quad \quad [(g, g_{i_1}, \ldots, g_{i_k})] \longmapsto g (g_{i_1} \ldots g_{i_k}) g^{-1}. \]
These maps $\rho_I$ are clearly compatible under inclusions $J \subseteq I$. In particular, the spectra $\sBC(L)$ can be endowed with a $\SU(r)$-equivariant derived local system by pulling back any $\SU(r)$-equivariant derived local systems on $\SU(r)$. We will use this structure in \cite{Ki2} to twist the equivariant cohomology theories $\E_{\SU(r)}$ considered above and relate it to $sl(n)$-link homology. 

\medskip
\begin{Defi} (Strict broken symmetries and their normalization) 

\noindent
Let $L$ denote a link described by the closure of a positive braid $w \in \Br(r)$ with $r$-strands, and let $w_I$ be a presentation of $w$ as $w = \sigma_{i_1} \ldots \sigma_{i_k}$. We first define the limiting $\SU(r)$-spectrum $\sBCU(w_{I})$ of strict broken symmetries as the space that fits into a cofiber sequence of $\SU(r)$-spaces: 
\[ \hocolim_{J \in \I} \BC(w_{J}) \longrightarrow \BC(w_I) \longrightarrow \sBCU(w_{I}). \]
where $\I$ is the category of all proper subsets of $I = \{ i_1, i_2, \ldots, i_k \}$. 

\medskip
\noindent
The spectrum $\sBCU(w_I)$ admits a natural increasing filtration by spaces $F_t \, \sBC(w_I)$ defined as the cofiber on restricting the above homotopy colimit to the full subcategories $\I^t \subseteq \I$ generated by subsets of cardinality at least $(k-t)$, so that the lowest filtration is given by $F_0 \sBC(w_I) = \BC(w_I)$. 

\medskip
\noindent
Define the spectrum of strict broken symmetries $\sBC(w_I)$ to be the filtered spectrum $F_t \, \sBC(w_I)$ above. The normalized spectrum of strict broken symmetries of the link $L$ is defined as 
\[ \sNBC(L) := \Sigma^{-2k} \sBC(w_I). \]
\end{Defi}

\medskip
\noindent
In order for the normalized definition to make sense, one would require proving that the construction of $\sBC(L)$ is independent (up to quasi-equivalence) of the braid presentation $w_I$ used to describe $L$. This comes down to checking the braid group relations, and the first and second Markov property. The first Markov property and the braid group relations are established in sections \ref{Markov 1} and \ref{Braid}-\ref{Inverse} resp. Results of these sections in fact admit a generalization to any compact connected Lie group $G$, and we work with that generality in the first six sections. However, as mentioned earlier, we have chosen to highlight the case $G = \SU(r)$ in this introduction. 

\medskip
\noindent
 The second Markov property imposes a stability condition on the construction, requiring that it be invariant under the augmentation of $w$ by the elementary braid $\sigma_r$ (or its inverse) so as to be seen as a braid in $\Br(r+1)$. This is equivalent to the observation that the link $L$ is unchanged on adding an extra strand that is braided with the previous one. In proving invariance under the second Markov property, we encounter a subtle point. Notice that $\sBC(L)$ is induced up from a $T^r$-spectrum we shall denote by $\sBC_{T^r}(L)$. Proving invariance under the second Markov property would therefore require showing that the $\SU(r+1)$-spectrum obtained by considering $L$ as the closure of $w\sigma_r^{\pm}$ is induced from $\sBC_{T^r}(L)$ along the standard inclusion $T^r  < \SU(r+1)$. This requirement is {\em almost true} but for a small subtlety. We show in section \ref{Markov2} that when $L$ is seen as the closure of the $(r+1)$-stranded braid $w\sigma_r^{\pm}$, the corresponding $\SU(r+1)$-spectrum, $\sBC(L)$ is induced up from $\sBC_{T^r}(L)$ along a {\em different inclusion} $\Delta_r : T^r \longrightarrow \SU(r+1)$. This inclusion differs from the standard inclusion in the last entry. This observation is resolved in section \ref{Markov2} by observing that $\sBC(L)$ can be canonically enhanced to a {\em framed} link invariant up to a notion of quasi-equivalence\footnote{we will keep the reference to framing implicit in this article for the sake of simplicity}. 
 
\medskip
\begin{Thm} 
As a function of (framed) links $L$, the filtered $\SU(r)$-spectrum of strict broken symmetries $\sBC(L)$ is well-defined up to quasi-equivalence. In particular, the limiting equivariant stable homotopy type $\sBCU(L)$ is a well-defined (framed) link invariant in $\SU(r)$-equivariant spectra (see remark \ref{limiting}). We discuss $\sBCU(L)$ below. 
\end{Thm}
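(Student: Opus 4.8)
The plan is to deduce the statement from Markov's theorem together with the invariance results of the preceding sections. Recall that Markov's theorem asserts that two braids $w\in\Br(r)$ and $w'\in\Br(r')$ have isotopic closures in $\R^3$ if and only if $w$ and $w'$ are related by a finite sequence of: (i) replacing a braid word by a word representing the same element of $\Br(r)$ (i.e.\ applying the defining relations of the braid group), (ii) conjugation $w\mapsto\beta w\beta^{-1}$ inside a fixed $\Br(r)$ (the first Markov move), and (iii) stabilization $w\mapsto w\sigma_r^{\pm1}$, viewing $w\in\Br(r)$ inside $\Br(r+1)$ (the second Markov move). Accordingly, it suffices to produce, for each such move, a quasi-equivalence between the two associated filtered spectra of strict broken symmetries --- that is, a zig-zag of maps of filtered $\SU(r)$-spectra each of whose fiber or cofiber is acyclic. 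None of this is proved here from scratch; the work has been distributed over the earlier sections, and the present theorem is their assembly.

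First I would dispatch the dependence on the presentation. For a word $w_I=\sigma_{i_1}\cdots\sigma_{i_k}$ the space $\BC_T(w_I)=G_{i_1}\times_T\cdots\times_T G_{i_k}$, and hence the entire filtered diagram defining $\sBC(w_I)$, depends a priori on $I$ and not merely on the element $w\in\Br(r)$. One therefore needs quasi-equivalences relating $\sBC(w_I)$ and $\sBC(w_{I'})$ whenever $w_I$ and $w_{I'}$ differ by a single braid relation --- far-commutativity $\sigma_i\sigma_j=\sigma_j\sigma_i$ for $|i-j|\ge2$, the braid relation $\sigma_i\sigma_{i+1}\sigma_i=\sigma_{i+1}\sigma_i\sigma_{i+1}$, and in the general (non-positive) case the relations involving $\sigma_i^{-1}$. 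These are exactly the contents of sections \ref{Braid}--\ref{Inverse}, which I would invoke. Invariance under conjugation in $\Br(r)$ --- the first Markov move --- is established in section \ref{Markov 1}, and I would cite it directly. Together with Markov's theorem these already show that $\sBC(w_I)$, up to quasi-equivalence, depends only on the conjugacy class of $w$ in $\Br(r)$.

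The substantive remaining point, and the one I expect to be the main obstacle, is invariance under stabilization. The difficulty, anticipated in the introduction, is that $\sBC(L)$ is induced from a $T^r$-spectrum $\sBC_{T^r}(L)$, but when $L$ is presented as the closure of $w\sigma_r^{\pm1}\in\Br(r+1)$ the resulting $\SU(r+1)$-spectrum is induced from $\sBC_{T^r}(L)$ not along the standard inclusion $T^r<T^{r+1}<\SU(r+1)$ but along a modified inclusion $\Delta_r:T^r\to\SU(r+1)$ that differs only in the last coordinate. The resolution, carried out in section \ref{Markov2}, is to induce both $\SU(r+1)$-spectra further up to a common larger group, along which $\Delta_r$ and the standard inclusion become conjugate, so that the two induced spectra --- and in particular their limiting homotopy types --- agree up to quasi-equivalence; one must also check that the extra generator $\sigma_r^{\pm1}$ shifts the filtration exactly by the amount absorbed into the normalization $\Sigma^{-2k}$, so that $\sNBC(L)$ is genuinely unchanged. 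Granting these section-by-section results, the association $L\mapsto\sBC(L)$ is well defined on isotopy classes of links up to quasi-equivalence, and passing to the direct limit of the filtration (remark \ref{limiting}) produces the honest $\SU(r)$-equivariant link invariant $\sBCU(L)$.
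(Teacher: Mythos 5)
Your proposal is correct in outline and assembles exactly the same ingredients the paper does: Markov's theorem, invariance under braid relations and inversion (sections \ref{Braid}--\ref{Inverse}, theorem \ref{main1}), cyclic permutation (section \ref{Markov 1}), and stabilization (section \ref{Markov2}, theorems \ref{M2a}--\ref{M2b}), with the normalization of definition \ref{stableLI} absorbing the filtration and suspension shifts.

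One correction to the mechanism you describe for the stabilization step. The resolution of the $\Delta_r$ issue is not that $\Delta_r$ and the standard inclusion of $T^r$ ``become conjugate'' after inducing up to a larger group --- indeed they are not conjugate in $\SU(r+1)$, since one subtorus has a repeated pair of eigenvalues and the other has a fixed coordinate, and these are not Weyl-equivalent. What the paper actually does (claim \ref{redef}, corollary \ref{redef2}, convention \ref{conv}) is to replace $\sBC(w_I)$ by a lift $\sBC(w_I,e_r)$ induced along a modified embedding $e_r : T^r \hookrightarrow \T \times \SU(r)$, chosen precisely so that $e_{r+1}\circ\Delta_r$ \emph{factors through} the standard inclusion $\T\times\SU(r)\subset\T\times\SU(r+1)$ as $e_r$. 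With this lift, Markov 2 becomes honest induction along a standard inclusion, and the original $\sBC(w_I)$ is recovered by passing to orbits under the kernel of the multiplication map $m : \T\times\SU(r)\to\SU(r)$. So the key idea is a factorization of inclusions through a carefully chosen lift, not a conjugacy.
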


\smallskip
\noindent
In section \ref{one} we show that the construction of the link invariant $\sBC(L)$ admits an internal symmetry given by the Galois action by complex conjugation on all Levi subgroups. 

\smallskip
\noindent
An obvious way to obtain (group valued) link invariants from the filtered homotopy type $\sBC(L)$ is to apply an equivariant cohomology theory and invoke the filtration to set up a spectral sequence. Let $\E_G$ denote a family of equivariant cohomology theories indexed by the collection $G = \SU(r)$, with $r \geq 1$, endowed with natural compatiblity under restrictions 
\[ \Delta_j : \E_{\SU(r_1)} \wedge \E_{\SU(r_k)} \ldots \wedge \E_{\SU(r_m)} \llra{\cong} j^\ast \E_{\SU(m)}, \, \, \mbox{where} \, \, \,  j : \SU(r_1) \times \SU(r_2) \times \ldots \times \SU(r_k) \hookrightarrow \SU(m), \]
is an arbitrary injection of Lie groups. Given a family of equivariant cohomology theories $\E_{\SU(r)}$ as above that satisfies additional conditions described in definition \ref{condM2}, the filtration of $\sBC(L)$ described above does indeed give rise to a spectral sequence that converges to $\E^*_{\SU(r)}(\sBCU(L))$. The $E_2$-term of this spectral sequence is itself a link invariant, and is given by the cohomology of the associated graded complex for the filtration of $\sBC(L)$. We have 

\medskip
\begin{Thm} 
Assume that $\E_{\SU(r)}$ is a family of $\SU(r)$-equivariant cohomology theories as above that satisfy the conditions of definition \ref{condM2}. Then, given a link $L$ described as a closure of a positive braid presentation $w_I$ on $r$-strands, one has a spectral sequence converging to $\E^\ast_{\SU(r)}(\sBCU(L))$ and with $E_1$-term given by 
\[ E_1^{t,s} = \bigoplus_{J \in \I^t/\I^{t-1}} \E_{\SU(r)}^s(\BC(w_J)) \, \, \Rightarrow \, \, \E_{\SU(r)}^{s+t-2k}(\sBCU(L)). \]
The differential $d_1$ is the canonical simplicial differential induced by the functor described in definition \ref{poset}. In addition, the terms $E_q(L)$ are invariants of the link $L$ for all $q \geq 2$, upto an indeterminacy given by an overall shift in bi-degree. 
\end{Thm}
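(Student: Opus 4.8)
The plan is to apply $\E_{\SU(r)}$ to the finite filtration $F_0\sBC(w_I)\subseteq F_1\sBC(w_I)\subseteq\cdots\subseteq F_k\sBC(w_I)=\sBCU(w_I)$ and run the associated spectral sequence, establishing the four assertions (existence, identification of the $E_1$-page and $d_1$, convergence, and invariance of the higher pages) in turn. The cofiber sequences $F_{t-1}\sBC(w_I)\to F_t\sBC(w_I)\to\Gr_t\sBC(w_I)$ produce, on applying $\E^\ast_{\SU(r)}$, the long exact sequences of an exact couple whose spectral sequence has $E_1^{t,s}=\E^s_{\SU(r)}(\Gr_t\sBC(w_I))$; since the filtration is bounded of length $k$ the spectral sequence is bounded and converges strongly, with abutment $\E^\ast_{\SU(r)}(\sBCU(L))$ in the internal degree $s+t-2k$ recorded in the statement, the $-2k$ being the shift in $\sNBC(L)=\Sigma^{-2k}\sBC(w_I)$. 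To identify the $E_1$-page I would use the octahedral axiom to realize $\Gr_t\sBC(w_I)$ as a suspension of the cofiber of $\hocolim_{\I^{t-1}}\BC(w_J)\to\hocolim_{\I^t}\BC(w_J)$, and then the skeletal (Bousfield--Kan) filtration of a homotopy colimit over a poset, together with the fact that $\I^t$ is obtained from $\I^{t-1}$ by adjoining exactly the subsets $J$ of cardinality $k-t$, to identify this cofiber with a suspension of a finite wedge of copies of $\BC(w_J)$, one per $J\in\I^t/\I^{t-1}$, the overall suspension being absorbed into the grading convention. Additivity of any cohomology theory over finite wedges then gives $E_1^{t,s}=\bigoplus_{J\in\I^t/\I^{t-1}}\E^s_{\SU(r)}(\BC(w_J))$, while the connecting homomorphisms, being the alternating sums of the cube face maps, assemble into the simplicial differential attached to the functor of Definition \ref{poset}; this is $d_1$. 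In particular $E_1$ records $k$, so invariance can only be expected from $E_2$ onward.

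For the invariance of $E_q(L)$ with $q\geq2$, recall from the preceding theorem that $\sBC(w_I)$ is, as a function of $L$, well defined up to quasi-equivalence: any two braid presentations of $L$ are connected by a zig-zag of maps of filtered $\SU(r)$-spectra (with changes of rank allowed through the second Markov move, reconciled below) each of whose fiber, equivalently cofiber, is acyclic. The key lemma is that a map $f\colon X\to Y$ of filtered $G$-spectra with acyclic (co)fiber $A$ induces an isomorphism on $E_q$ for all $q\geq2$. Indeed, by definition acyclicity of $A$ means the associated graded chain complex $\Gr_\bullet(A)$ is null-homotopic in the homotopy category of $G$-spectra, so $\E^\ast_{\SU(r)}$ carries it to a contracting cochain homotopy on $\E^\ast_{\SU(r)}(\Gr_\bullet(A))$; but that cochain complex is precisely $(E_1(A),d_1)$, so $E_2(A)=0$ and hence $E_q(A)=0$ for all $q\geq2$, and the same holds for $\Sigma^{\pm1}A$. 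Feeding the cofiber sequence $A\to X\to Y$ into the long exact sequence of spectral sequences associated to a cofiber sequence of boundedly filtered spectra then forces $E_q(X)\iso E_q(Y)$ for $q\geq2$; composing isomorphisms along the zig-zag shows that $E_q(\sBC(w_I))$ depends only on $L$.

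The one point requiring extra care is the second Markov move, which changes the ambient group from $\SU(r)$ to $\SU(r+1)$. Here I would invoke the description in section \ref{Markov2} of the $\SU(r+1)$-spectrum $\sBC(w_I\sigma_r^{\pm})$ as induced from $\sBC_{T^r}(L)$ along the modified inclusion $\Delta_r$, together with the compatibility isomorphisms $\E_{\SU(r)}\iso\iota^\ast\E_{\SU(r+1)}$ and the remaining hypotheses of Definition \ref{condM2} and the induction--restriction adjunction, so as to obtain an isomorphism of the two spectral sequences that is compatible with filtrations and with $d_1$, hence an isomorphism of $E_q$-pages for $q\geq2$. Combining the braid relations, the first Markov move, and this, Markov's theorem yields that $E_q(L)$ for $q\geq2$ is independent of the chosen braid presentation of $L$.

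I expect the main obstacle to be exactly the step converting homotopical acyclicity into vanishing of the higher pages: one must make precise that a chain null-homotopy of $\Gr_\bullet(A)$ in the non-abelian homotopy category of genuine $\SU(r)$-spectra is sent by $\E^\ast_{\SU(r)}$ to an honest contracting cochain homotopy, and that the long exact sequence of spectral sequences for a cofiber sequence of filtered spectra is valid page by page in this equivariant stable setting. A secondary difficulty is organizational — checking that the rank-changing comparison underlying the second Markov move respects the filtrations and the simplicial differentials, so that one truly obtains an isomorphism of spectral sequences and not merely of $E_q$-groups. The remaining ingredients (the exact couple, the cube combinatorics identifying $E_1$ and $d_1$, and strong convergence from finiteness of the filtration) are routine given the constructions already in place.
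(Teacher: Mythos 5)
Your setup (the exact couple from the finite filtration, the identification of $E_1$ and $d_1$ via Example \ref{FiltBC}, and convergence from boundedness) is fine. The gap is in the second paragraph, in what you call the ``key lemma.'' The statement that a map $f\colon X\to Y$ of filtered $G$-spectra with acyclic (co)fiber induces an isomorphism on $E_q$ for all $q\geq 2$ is \emph{false}, and the paper itself exhibits a counterexample immediately after Claim \ref{q-isom}: the shift $s\colon X\to X[1]$, given levelwise by $F_tX\to F_{t+1}X$, is an elementary quasi-equivalence whose fiber is acyclic, yet $s$ induces the zero map on associated graded, hence the zero map on $E_1$ and on every page $E_q$. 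So $s$ cannot induce an isomorphism on $E_2$ unless $E_2=0$. The ``long exact sequence of spectral sequences'' you appeal to does not exist in the form needed: a cofiber sequence of filtered spectra produces a long exact sequence of $E_1$ terms at each bidegree, but cohomology of a long exact sequence of cochain complexes is not a long exact sequence, so vanishing of $E_2(A)$ does not by itself propagate to $E_2(X)\cong E_2(Y)$ along the map.

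This is precisely what the ISN hypothesis of Definition \ref{condM2} is for, and you do not invoke it where it is load-bearing. The paper's argument is that, under the ISN condition, each elementary quasi-equivalence in the zig-zag connecting two braid presentations of $L$ induces on the $E_1$ complex a map which is injective, surjective, or null. In the injective or surjective case the long exact sequence of $E_1$ terms actually does split into short exact sequences of cochain complexes, and Claim \ref{q-isom} concludes the map is a quasi-isomorphism; in the null case Claim \ref{q-isom2} builds the pushout $P_\rho$ with quasi-equivalences $Y\to P_\rho$ and $X[1]\to P_\rho$ that are surjective on $E_1$, reducing to Claim \ref{q-isom} and identifying $E_2(Y)$ with $E_2(X[1])$, which is $E_2(X)$ up to the reindexing of Remark \ref{regrading}. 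You do mention Definition \ref{condM2} in passing for the rank-changing Markov 2 comparison, but the dichotomy it provides is the backbone of the \emph{entire} invariance statement for $E_q$, $q\geq 2$, not a side condition; without it, acyclicity of the fiber alone gives you nothing at the $E_2$ level.
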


\smallskip
\noindent
The Galois symmetry mentioned above descends to an induced Galois symmetry of the link invariants $E_q(L)$ for $q \geq 2$. 

\medskip
\noindent
In \cite{Ki2}, we will relate special cases of the above spectral sequence to various well-known link homology theories. 
The limiting spectrum $\sBCU(L)$ can actually be described explicitly, and so we know exactly what the above spectral sequence converges to. More precisely, in section \ref{one} we will prove a generalization of the following theorem for arbitrary compact connected Lie groups $G$, and for braid words that are not necessarily positive.

\medskip
\begin{Thm}
Let $I = \{i_1, \ldots, i_k \}$ be an indexing set so that $w_I = \sigma_{i_1} \sigma_{i_2} \ldots \sigma_{i_k}$ is a braid word that closes to the link $L$. Let $V_I$ denote the representation of $T$ given by a sum of root spaces
\[ V_I = \sum_{j \leq k} w_{I_{j-1}}(\alpha_{i_j}) , \quad \mbox{where} \quad w_{I_{j-1}} = \sigma_{i_1} \ldots \sigma_{i_{j-1}}, \quad w_{I_0} = id, \]
and where $w_{I_{j-1}} (\alpha_{i_j})$ denotes the root space for the root given by the $w_{I_{j-1}}$-translate of the simple root $\alpha_{i_j}$. Then the $\SU(r)$-equivariant homotopy type of $\sBCU(L)$ is given by the equivariant Thom space (suitably desuspended)
\[ \sBCU(L) = \Sigma^{-2k} \SU(r)_+ \wedge_T (S^{V_I} \wedge T(w)_+), \]
where $S^{V_I}$ denotes the one-point compactification of the $T$-representation $V_I$, and $T(w)$ denotes the twisted conjugation action of $T$ on itself
\[ t (\lambda) := w^{-1} t w t^{-1} \,  \lambda \quad \mbox{where} \quad w = \sigma_{i_1} \ldots \sigma_{i_k}, \quad t \in T, \quad \lambda \in T(w). \]
\end{Thm}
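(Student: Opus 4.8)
The plan is to identify the limiting spectrum first at the level of $T$-equivariant spectra and only induce up to $\SU(r)$ at the end, the normalizing desuspension $\Sigma^{-2k}$ being purely formal. Write $\sBCUT(w_I)$ for the cofiber of $\hocolim_{J\in\I}\BC_T(w_J)\to \BC_T(w_I)$, so that $\sBCU(w_I)=\SU(r)_+\wedge_T\sBCUT(w_I)$, since $\SU(r)_+\wedge_T(-)$ preserves cofiber sequences. The first observation is that $\sBCUT(w_I)$ is the \emph{total cofiber} of the $k$-dimensional cube $J\mapsto \BC_T(w_J)$, $J\subseteq\{1,\dots,k\}$, and that this cube is the external product, in the category of two-sided $T$-spaces under $\times_T$, of the $k$ single-edge diagrams $[\,T\hookrightarrow G_{i_j}\,]$. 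Because left and right translation make each $G_{i_j}$ a \emph{free} $T$-space, the functors $(-)\times_T G_{i_j}$ and $G_{i_j}\times_T(-)$ are homotopy-exact, and so the total cofiber of this external product is the associated iterated $\wedge_T$ of total cofibers. Concretely I would run this as an induction on $k$: splitting the cube along the last coordinate gives
\[ \sBCUT(w_I)\ \simeq\ \sBCUT(w_{I'})\,\wedge_T\,(G_{i_k}/T),\qquad I'=\{i_1,\dots,i_{k-1}\}, \]
where $G_{i_k}/T$ is the based two-sided $T$-space obtained by collapsing the subgroup $T\subset G_{i_k}$, glued to $\sBCUT(w_{I'})$ along the ``right'' $T$ of the former and the ``left'' $T$ of the latter, the outermost $T$'s being finally identified along the diagonal to produce the conjugation action. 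The base case $k=0$ is the empty word: the empty $\times_T$-product is the unit $T$, whose total cofiber is $T_+$, matching $S^{V_\emptyset}\wedge T(\mathrm{id})_+=S^0\wedge T_+$.

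The heart of the matter is the one-letter computation: identifying $G_i/T$, with its two-sided $T$-action, as an equivariant Thom space. Here I would use that $G_i\to G_i/B_i\cong \C\mathrm{P}^1$ is a principal $B_i$-bundle over the flag variety of the Levi, that the collapsed subgroup $T\subset G_i$ is a deformation retract of the fiber $B_i$ over the base-point coset, and the Bruhat decomposition of $G_i$; together these exhibit $G_i/T$, up to homotopy, as the Thom space of the root line $\mathfrak{g}_{\alpha_i}$ (weight $\alpha_i$) twisted over the torus direction by the clutching datum $\alpha_i^\vee\colon S^1\to T$. The outcome I expect is an equivalence of based two-sided $T$-spectra
\[ G_i/T\ \simeq\ S^{\alpha_i}\wedge T(\sigma_i)_+ , \]
where $T(\sigma_i)$ is $T$ equipped with the residual two-sided action which, after diagonalization, is the twisted conjugation $t\cdot\lambda=\sigma_i^{-1}t\sigma_i t^{-1}\lambda$. (For $G=\SU(2)$ this is the familiar fact that $\SU(2)$ with its diagonal circle collapsed is $\Sigma^{2}$ of a circle on which conjugation acts through its squaring map.)

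Feeding the one-letter identity into the inductive step and chasing the two-sided actions produces the formula. The ``right'' $T$-translation on the torus factor $T(w_{I_{k-1}})_+$ left over from the first $k-1$ letters re-coordinatizes the next sphere $S^{\alpha_{i_k}}$ by the partial product $w_{I_{k-1}}=\sigma_{i_1}\cdots\sigma_{i_{k-1}}$, turning it into $S^{\,w_{I_{k-1}}(\alpha_{i_k})}$; inductively the sphere factors multiply to $S^{V_I}$ because $V_I=V_{I'}+w_{I_{k-1}}(\alpha_{i_k})$. Simultaneously the torus factors $T(\sigma_{i_1}),\dots,T(\sigma_{i_k})$ compose, via a telescoping identity among partial products, to the single space $T(w)$, $w=\sigma_{i_1}\cdots\sigma_{i_k}$, with its twisted conjugation. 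Inducing up along $T<\SU(r)$ then identifies the limiting spectrum with $\SU(r)_+\wedge_T(S^{V_I}\wedge T(w)_+)$, and the stated normalized form follows by the desuspension $\Sigma^{-2k}$, which is consistent since $V_I$, a sum of $k$ root spaces, is $2k$-dimensional.

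The main obstacle is the one-letter computation carried out \emph{two-sided-equivariantly}: it does not suffice to know the homotopy type of $G_i/T$; one must pin down its two $T$-actions, in particular the direction of the twist by $\sigma_i$ and the identification of the weight as the positive simple root $\alpha_i$ (rather than $-\alpha_i$ or $s_i(\alpha_i)$), because these control the re-coordinatizations in the assembly, and an error there would produce the wrong roots in $V_I$. A secondary, essentially combinatorial point is the telescoping identity assembling the $T(\sigma_{i_j})$ into $T(w)$, i.e.\ that the residual conjugation actions at the successive stages compose correctly. Finally, for the promised generalization to an arbitrary compact connected $G$ and to non-positive braid words, one replaces $G_i$ by the relevant two-sided $T$-space for a positive letter $\sigma_i$, and for a negative letter $\sigma_i^{-1}$ by the analogue built from the opposite Bruhat cell, whose associated cofiber is a \emph{desuspension} $S^{-\beta}$ rather than a Thom space --- this is where the stable category is genuinely needed --- and the same assembly goes through.
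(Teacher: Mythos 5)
Your proposal takes essentially the same route as the paper: you decompose the total cofiber of the $k$-cube into the iterated $\wedge_T$-smash of one-letter cofibers (the paper phrases this via the product of pushout categories $P^I$, you phrase it inductively by splitting off the last letter), then identify each one-letter cofiber $G_i/T$ with $S^{\alpha_i}\wedge\sigma_iT_+$ as a two-sided $T$-spectrum, and reassemble by letting the accumulated $\sigma$'s re-coordinatize the sphere factors to produce $V_I$ and the twisted conjugation torus $T(w)$. The paper obtains the one-letter identification from its Pontrjagin--Thom construction (claim 2.4) rather than from a Bruhat/clutching argument, but this is a presentational difference and the underlying geometric fact is the same; your emphasis on the telescoping re-coordinatization that yields the roots $w_{I_{j-1}}(\alpha_{i_j})$ actually spells out a step the paper compresses to ``collecting all the terms $\sigma_iT_+$ is easily seen to yield the result.''
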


\smallskip
\begin{Remark}
Note that the structure group $T$ of the above Thom spectrum can be reduced to the sub torus $T^w \subseteq T$ that is fixed by the Weyl element $w$ that underlies $w_I$. The torus $T^w$ is isomorphic to a product of rank one tori indexed by the components of $L$. More precisely, the factor corresponding to a particular component of $L$ is the diagonal in the standard sub torus of $T^r$ indexed by the strands belonging to that particular compoment. Since the cohomology of $\sBCU(L)$ (assuming Thom isomorphism) depends only on the number of components of $L$, we may think of $\sBCU(L)$ as a stable lift of the invariant $E_\infty(-1)$ (see Theorem 3 in \cite{J2}). Studying the rich internal structure of the spectrum $\sBCU(L)$ is work in progress. If $L$ has a presentation as the closure of a positive braid, then $\sBCU(L)$ appears to be closely related to the Braid Varieties studied in \cite{CGGS}. 
\end{Remark}

\medskip
\noindent
The entire topological construction presented in this article can be made in the context of a semiclassical topological $G$-gauge theory (see \cite{Ki3} for a definition). The space $\BC(w_I)$ of broken symmetries may be identified with gauge fields that decorate domain walls on a 2d-torus. These domain walls correspond to the breaking of gauge symmetry along the longitudinal circle from a compact minimal Levi subgroup $G_i \subseteq G$, to the maximal torus $T \subset G_i$. These walls are placed at points prescribed by the sequence $I$ along the longitudinal circle of the torus, and correspond to those $G$-connections along the meridian circle, whose holonomy-stabilizer subgroup in $G$ jumps from some conjugate of $T$ to a conjugate of $G_i$ around those points for $i \in I$. 

\medskip
\noindent
The moduli space $\BC(w_I)$ of gauge fields that decorate domain walls admit the loci $\BC(w_J)$ for $J \subsetneq I$. This loci corresponds to the trivial or the so called ``transparent defect" along some longitudinal arc in the sequence. Therefore, to strictly recover the defects prescribed by $I$, one needs to factor out the fields $\BC(w_J)$ for $J \subsetneq I$. This factoring out process, systematically implemented by the homotopy colimit, gives rise to a filtered equivariant homotopy type of gauge fields along domain walls: the \textit{strict} broken symmetries $\sBC(w_I)$. 

\medskip
\noindent
Now, picking an invertible boundary condition (with a framing) for our semiclassical topological $G$-gauge theory compactified on a circle, and tracing it over the space of strict broken symmetries, gives rise to a cosimplicial higher categorical object which we call the ``Khovanov-Rozansky" object. Under suitable conditions, this object is an invariant of the set of conjugacy classes in the Artin Braid group of $G$. In particular, when $G = \SU(n)$ one obtains link invariants which are categorical versions of Khovanov-Rozansky homology. We will outline this physical picture in the followup to this article \cite{Ki3}.

\medskip
\noindent
We are grateful to Mikhail Khovanov and Lev Rozansky for their constant interest. We thank Tudor Dimofte, David Gepner, Eugene Gorsky, Matt Hogancamp, David Jordan, Jack Morava, Dale  Rolfsen, Jacob Lurie and Paul Wedrich for helpful discussions. We also thank Vitaly Lorman, Tom\'as Mej\'{i}a Gomez, Apurva Nakade and Valentin Zakharevich for a thorough reading, especially of section \ref{Braid}. Finally, we thank Aaron Mazel-Gee for bringing to our attention an error in an earlier version of the proof of theorem \ref{braid rel}. 

\medskip
\noindent
Before we begin, we would like to mention a word about how we have organized the various definitions in this article, and those in its sequel \cite{Ki2}. All definitions have been labeled by the content they seek to define. The body of the definition also contains some basic observations about the definition that are straightforward to see, and don't rise to the level of claims. We hope that this helps the reader navigate the host of unavoidable definitions in this article.

\medskip
\section{(Strict) broken symmetries and the $G$-equivariant homotopy type} \label{two}

\medskip
\noindent
Let $G$ be a compact connected Lie group of rank $r$, and semisimple rank $r_s \leq r$.\footnote{All results in the first six sections of this paper hold for $G$ being the unitary form of an arbitrary  symmetrizable Kac-Moody group.} 
 Let us fix a root datum, and let $T \subset G$ denote the maximal torus acted upon by the Weyl group $W$. Let $\sigma_i \in W$ for $i \leq r_s$, be the generators corresponding to the simple roots $\alpha_i$, where $r_s \leq r$ is the semisimple rank of $G$. The Artin braid group for this root datum is a lift of the Coxeter presentation of $W$, and is defined by
\[ \Br(G) = \langle \, \sigma_i, \, 1 \leq i \leq r_s, \quad \sigma_i \sigma_j \sigma_i \ldots m_{ij} \, \mbox{terms} = \sigma_j \sigma_i \sigma_j \ldots m_{ij} \, \mbox{terms}  \rangle, \]
where the integers $m_{ij}$ are determined by the entries of the Cartan matrix corresponding to the root datum. In the special case of $G = \SU(r)$, the semisimple rank $r_s$ is $r-1$ and the braid group is the classical braid group $\Br(r)$ of braids with $r$ strands.

\medskip
\noindent
In this section we will construct a $G$-equivariant filtered stable homotopy type called the {\em strict broken symmetries}. All our $G$-spectra are genuine, by which we mean they are indexed on the complete $G$-universe. We first begin by defining the equivariant $G$-spectrum of broken symmetries given a presentation of an element $w \in \Br(G)$. Broken symmetries will then be assembled to construct strict broken symmetries. We begin with the definitions for positive braids. 

\medskip
\begin{defn} \label{BC+} (Broken symmetries: Positive braids)

\noindent
Let $I = \{ i_1, i_2, \ldots, i_k \}$ denote an indexing sequence with $i_j \leq r_s$, so that a positive braid $w$ admits a presentation in terms of the fundamental generators of $\Br(G)$, $w = w_I := \sigma_{i_1} \sigma_{i_1} \ldots \sigma_{i_k}$. Let $G_i$ denote the unitary form in the reductive Levi subgroup having roots $\pm \alpha_i$. We consider $G_i$ as a two-sided $T$-space under the canonical left(resp. right) multiplication. 

\smallskip
\noindent
Define the equivariant $G$-spectrum of broken symmetries to be the suspension spectrum of 
\[ \BC(w_I) := G \times_T ({G_{i_1}} \times_T {G_{i_2}} \times_T \cdots \times_T {G_{i_k}}) = G \times_T \BC_T(w_I) , \]
where the $T$-action on $\BC_T(w_I) := (G_{i_1} \times_T G_{i_2} \times_T \cdots \times_T G_{i_k})$ is given by endpoint conjugation
\[ t \,  [(g_1, g_2, \cdots, g_{k-1}, g_k)] := [(tg_1, g_2, \cdots, g_{k-1}, g_kt^{-1})]. \]
\end{defn}

\medskip
\begin{remark}
As was already mentioned in the introduction, let us point out again that the $G$-stack $G \times_T (G_{i_1} \times_T G_{i_2} \times_T \cdots \times_T G_{i_k})$ is equivalent to the stack of principal $G$-connections on the trivial $G$-bundle over $S^1$, endowed with a reduction of the structure group to $T$ at $k$ distinct marked points, and with the property that the holonomy along the $i$-th arc belongs to the subgroup $G_i$ in terms of this reduction. The stack of strict broken symmetries, which we will encounter later, should be interpreted as the stack obtained from broken symmetries by factoring out those connections whose holonomy preserves the $T$-structure in some arc. 
\end{remark}

\medskip
\noindent
Let us now address the matter of braid elements $w$ that admit a presentation with negative exponents. Let $\zeta_i$ denote the virtual $G_i$ representation $(\mathfrak{g}_i - r\R)$, where $\mathfrak{g}_i$ is the adjoint representation of $G_i$, and $r\R$ is the trivial representation of dimension $r$ (rank of $G$). 
Notice that the restriction of $\zeta_i$ to $T$ is isomorphic to the root space representation $\alpha_i$ (as a real representation). 

\medskip
\begin{defn} (The dual Adjoint sphere spectrum) \label{desusp} 

\noindent
Let $S^{-\zeta_i}$ denote the sphere spectrum for the virtual real $G_i$ representation $-\zeta_i$. In what follows, we may pick any model for this sphere. For instance, one may choose to define $S^{-\zeta_i}$ to be the dual of $\Sigma^{-r} S^{\mathfrak{g}_i}$, and denote the left $G_i$ action on it by $Ad(g)_\ast$ 
\[ S^{-\zeta_i} := \Map(S^{\mathfrak{g}_i}, S^r), \quad Ad(g)_\ast \, \varphi := g \circ \varphi \circ Ad(g^{-1}), \quad \varphi \in \Map(S^{\mathfrak{g}_i}, S^r). \]
\end{defn}

\medskip
\begin{defn} \label{BC} (Broken symmetries: Arbitrary braids)

\noindent
Consider the more general indexing sequence expressed as $I := \{ \epsilon_{i_1} i_1, \cdots, \epsilon_{i_k} i_k \}$, where $i_j \leq r_s$ as before, and $\epsilon_j = \pm 1$. Assume that $w = w_I := \sigma_{i_1}^{\epsilon_{i_1}} \cdots \sigma_{i_k}^{\epsilon_{i_k}}$. We define the equivariant $G$-spectrum  
\[  \BC(w_{I}) := G_+ \wedge_T \BC_T(w_I),  \, \, \mbox{where} \, \, \]
\[  \BC_T(w_{I}) := H_{i_1} \wedge_T \ldots  \wedge_T H_{i_k},  \, \, \mbox{and} \, \, H_i = S^{-\zeta_i} \wedge \, {G_i}_+,  \, \, \mbox{if} \, \, \epsilon_i = -1, \, \,  H_i = {G_i}_+ \, \,  \mbox{else}. \]
The $T \times T$-action on $H_i$ is defined by demanding that an element $(t_1, t_2) \in T \times T$ acts on $S^{-\zeta_i} \wedge \, {G_i}_+$ by smashing the action $Ad(t_1)_\ast$ on $S^{-\zeta_i}$ with the standard $T \times T$ action on ${G_i}_+$ given by left (resp. right) multiplication. As before the $T$-action on ${H_{i_1}}_+ \wedge_T {H_{i_2}}_+ \wedge_T \ldots  \wedge_T H_{i_k}$ is by conjugation on the first and last factor. Notice that $\BC(w_{I})$ is an equivariant Thom spectrum over the stack of broken symmetries $G \times_T(G_{i_1} \times_T \cdots \times_T G_{i_k})$. 
\end{defn}

\medskip
\noindent
Our eventual goal is to study the naturality properties of the construction $\BC(w_{I})$ in terms of subwords. In order to study this, we will require to make certain constructions known as Pontrjagin-Thom constructions that require studying tubular neighborhoods. In order to do so, let us fix a $G$-binvariant metric on $G$.  

\medskip
\begin{claim} \label{PT1}
The Pontrjagin-Thom construction along $T \subset G_i$ induces a canonical map of equivariant $T \times T$-spectra
\[ \pi_i : S^{-\zeta_i} \wedge \, {G_i}_+ \longrightarrow T_+, \]
where the $T \times T$-action on $T$ is induced by left (rep. right) group multiplication. 
\end{claim}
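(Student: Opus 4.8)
The plan is to produce $\pi_i$ as the Pontrjagin–Thom collapse map for the embedding $T \hookrightarrow G_i$, using the fixed $G$-biinvariant metric on $G$ (which restricts to a $G_i$-biinvariant metric on $G_i$). The first step is to identify the normal bundle of $T$ inside $G_i$. Since $T$ is a subgroup, the tangent bundle of $G_i$ restricted to $T$ splits $T$-equivariantly (for the two-sided action) as $T \times (\mathfrak{t} \oplus \mathfrak{g}_i/\mathfrak{t})$, and the biinvariant metric makes this orthogonal, so the normal bundle of $T$ in $G_i$ is the trivial bundle $T \times (\mathfrak{g}_i/\mathfrak{t})$. As a real $T$-representation, $\mathfrak{g}_i/\mathfrak{t}$ is the root-space representation attached to $\pm\alpha_i$, i.e. exactly $\alpha_i$ in the notation of the paper, which is also the restriction of $\zeta_i = \mathfrak{g}_i - r\R$ to $T$ as noted just before Definition~\ref{desusp}. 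The tubular neighborhood theorem, applied $G_i$-equivariantly (hence in particular $T\times T$-equivariantly, since $T\times T \subset G_i \times G_i$ acts on $G_i$), gives an open $T\times T$-stable tubular neighborhood of $T$ in $G_i$ diffeomorphic to the total space of this normal bundle, namely $T \times \mathfrak{g}_i/\mathfrak{t}$.

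Next I would assemble the collapse map. Choosing a $G_i$-invariant model for $G_i$ as a based space — or rather working with ${G_i}_+$ — the Pontrjagin–Thom construction collapses the complement of the open tubular neighborhood to the basepoint, producing a based $T\times T$-map
\[ {G_i}_+ \longrightarrow T_+ \wedge S^{\mathfrak{g}_i/\mathfrak{t}}, \]
where $S^{\mathfrak{g}_i/\mathfrak{t}}$ is the one-point compactification of the normal representation. Smashing with $S^{-\zeta_i}$ and using that the $T$-equivariant (indeed, via $Ad$, the relevant) sphere $S^{-\zeta_i}$ smashed with $S^{\mathfrak{g}_i/\mathfrak{t}}$ is canonically equivalent to $S^{0}$ — because $\zeta_i$ restricts to $\mathfrak{g}_i/\mathfrak{t}$ over $T$, so $-\zeta_i + (\mathfrak{g}_i/\mathfrak{t})$ is stably trivial as a virtual $T$-representation, with a preferred nullhomotopy coming from the chosen model $S^{-\zeta_i} = \Map(S^{\mathfrak{g}_i}, S^r)$ — yields the desired map
\[ \pi_i : S^{-\zeta_i} \wedge {G_i}_+ \longrightarrow S^{-\zeta_i} \wedge T_+ \wedge S^{\mathfrak{g}_i/\mathfrak{t}} \xrightarrow{\ \simeq\ } T_+. \]
One must check that this composite is $T\times T$-equivariant: the left $T$-factor acts on $S^{-\zeta_i}$ via $Ad(t_1)_\ast$ and on the normal fiber compatibly (left translation in $G_i$ induces $Ad(t_1)$ on $\mathfrak{g}_i/\mathfrak{t}$ after the identification along $T$), so the two $\mathfrak{g}_i$-actions cancel; the right $T$-factor acts trivially on the sphere factors and by right multiplication on $T_+$, matching the stated action.

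The main obstacle I expect is not the existence of the collapse map but its \emph{canonicity} and equivariance bookkeeping: the identification of the normal representation over $T$ with $\mathfrak{g}_i/\mathfrak{t}$ must be done $T\times T$-equivariantly and compatibly with the specific model $S^{-\zeta_i} = \Map(S^{\mathfrak{g}_i}, S^r)$ with its $Ad$-action, so that the cancellation $S^{-\zeta_i} \wedge S^{\mathfrak{g}_i/\mathfrak{t}} \simeq S^0$ is natural rather than merely existing up to unspecified homotopy — this is what makes $\pi_i$ itself canonical, and it is the point that will later feed into the naturality-in-subwords arguments. A secondary subtlety is the choice of global tubular neighborhood: because $G_i$ has rank-one semisimple part, $T$ need not be the full fixed point set of anything convenient, but the exponential map of the biinvariant metric gives an explicit equivariant tube (geodesics normal to $T$), so there is a preferred choice and no real ambiguity. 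Everything else — the collapse construction, smashing, and reading off the action — is routine once these identifications are pinned down.
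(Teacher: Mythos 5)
Your proposal is correct and follows essentially the same route as the paper: identify the normal bundle of $T\subset G_i$ with the trivial bundle $T\times\zeta_i$ via right translation, do the collapse to get $S^{-\zeta_i}\wedge {G_i}_+\to S^{-\zeta_i}\wedge S^{\zeta_i}\wedge T_+$, and cancel using that the diagonal adjoint action on $S^{-\zeta_i}\wedge S^{\zeta_i}$ is trivial. Your extra remarks on the exponential map and the canonicity of the cancellation are consistent with the paper's setup (the biinvariant metric and fixed $\epsilon$ are chosen just before the claim for precisely this reason), so there is no substantive divergence.
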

\begin{proof}
Let us describe the construction of the Pontrjagin-Thom construction along $T \subset G_i$ in some detail. First notice that the normal bundle of $T$ in $G_i$ is canonically trivial (using the right $T$-translation of the complement of the Lie algebra of $T$, which we have denoted by $\zeta_i$. The conjugation action of $T$ on this normal bundle can therefore be identified with the standard root-space action of $T$ on $\zeta_i$. Performing the Pontrjagin-Thom construction amounts to collapsing a complement of a epsilon neighborhood of $T$ (for some small epsilon fixed throughout). Doing so gives rise to the map 
\[ \pi_i : S^{-\zeta_i} \wedge \, {G_i}_+ \longrightarrow S^{-\zeta_i} \wedge S^{\zeta_i} \wedge T_+. \]
We would like to identify $S^{-\zeta_i} \wedge S^{\zeta_i}$ with $S^0$ so as to identify the codomain with $T_+$. This is clearly the case non-equivariantly but we must verify the required equivariance. Recall the action of $(t_1, t_2) \in T \times T$-action on $S^{-\zeta_i} \wedge \, {G_i}_+$ defined by smashing the action $Ad(t_1)_\ast$ on $S^{-\zeta_i}$ with the standard $T \times T$ action on ${G_i}_+$ given by left (resp. right) multiplication. Notice that $t_1 g_i t_2 = t_1 g_i t_1^{-1} t_1 t_2 = (Ad(t_1) g_i) \, t_1t_2$. In particular, performing the Pontrjagin-Thom construction on the inclusion $T \subset G_i$ turns the $T \times T$-action on ${G_i}_+$ into the expected action on $S^{\zeta_i} \wedge T_+$. It follows that the $T \times T$-action on the product $S^{-\zeta_i} \wedge S^{\zeta_i}$ is trivial, and amounts to group multiplication on the factor $T_+$ as we require. 
\end{proof}

\medskip
\noindent
\begin{defn} (The functor $\BC(w_J)$) \label{poset}

\noindent
Given a braid word $w_{I}$, for $I = \{ \epsilon_{i_1} i_1, \cdots, \epsilon_{i_k} i_k \}$, let $2^{I}$ denote the set of all the subsets of $I$. Let us define a poset structure on $2^{I}$ generated by demanding that nontrivial indecomposable morphisms $J \rightarrow K$ have the form where either $J$ is obtained from $K$ by dropping an entry $i_j \in K$ (i.e. an entry for which $\epsilon_{i_j} = 1$), or that $K$ is obtained from $J$ by dropping an entry $-i_j$ (i.e an entry for which $\epsilon_{i_j} = -1$). 

\medskip
\noindent
The construction $\BC(w_{J})$ induces a functor from the category $2^{I}$ to $G$-spectra. More precisely, given a nontrivial indecomposable morphism $J \rightarrow K$ obtained by dropping $-i_j$ from $J$, the induced map $\BC(w_{J}) \rightarrow \BC(w_{K})$ is obtained by applying the map $\pi_{i_j}$ of claim \ref{PT1} in the corresponding factor. Likewise, if $J$ is obtained from $K$ by dropping the factor $i_j$, then the map $\BC(w_{J}) \rightarrow \BC(w_{K})$ is defined as the canonical inclusion induced by the map $T_+ \rightarrow {G_{i_j}}_+$ in the corresponding factor. 
\end{defn}

\medskip
\noindent
For the functor $\BC(w_J)$ defined above, we will require the construction of a derived notion of a colimit over subcategories of $2^{I}$. This construction is known as the {\em homotopy colimit} (definition \ref{hocolim}), which we take as a black-box construction for now, and review the Appendix. We now define the equivariant $G$-spectrum of {\em{strict broken symmetries}}:

\bigskip
\begin{defn} \label{SBC} (Limiting strict broken symmetries) 

\noindent
let $I^+ \subseteq I$ denote the terminal object of $2^{I}$ given by dropping all terms $-i_j$ from $I$ (i.e. terms for which $\epsilon_{i_j} = -1$). Define the poset category $\I$ to the subcategory of $2^{I}$ given by removing the terminal object $I^+$. 
\[ \I = \{ J \in 2^{I}, \,  J \neq I^+ \} \]

\noindent
The equivariant $G$-spectrum $\sBCU(w_{I})$ is defined to be the cofiber of the canonical map: 
\[  \pi : \hocolim_{J \in \I} \BC(w_{J}) \longrightarrow \BC(w_{I^+}). \]
In other words, one has a cofiber sequence of equivariant $G$-spectra
\[ \hocolim_{J \in \I} \BC(w_{J}) \longrightarrow \BC(w_{I^+}) \longrightarrow \sBCU(w_{I}). \]
Note that the above cofiber sequence can be induced from a cofiber sequence of $T \times T$-spectra. 
\end{defn}

\medskip
\begin{defn} \label{Filt} (Filtration of strict broken symmetries via sub-posets $\I^t \subseteq \I$)

\noindent
We endow $\sBCU(w_{I})$ with a natural filtration as $G$-spectra defined as follows. The lowest filtration is defined as
\[ F_0 \, \sBC(w_{I}) = \BC(w_{I^+}), \quad \mbox{and} \quad F_k \, \sBC(w_{I}) = \ast, \quad \mbox{for} \quad k < 0. \]
Higher filtrations $F_t$ for $t>0$ are defined as the cone on the restriction of $\pi$ to the subcategory $\I^t \subseteq \I$ consisting of objects no more than $t$ nontrivial composable morphisms away from $I^+$. 

\smallskip
\noindent
In other words, the filtered spectrum of strict broken symmetries $F_t \, \sBC(w_{I})$ is defined via the cofiber sequence
\[ \hocolim_{J \in \I^t} \BC(w_{J}) \longrightarrow \BC(w_{I^+}) \longrightarrow F_t \, \sBC(w_{I}). \]
As before, note that $F_t \, \sBC(w_I) = G_+ \wedge_T F_t \, \sBC_T(w_I)$, with the filtered $T \times T$-spectrum $F_t \, \sBC_T(w_I)$ defined just as above. Since $\I^t \subset \I^{t+1}$, we obtain a canonical filtration of length $k = |I|$
\[  \BC(w_{I^+}) = F_0 \, \sBC(w_{I}) \rightarrow F_1 \, \sBC(w_{I}) \rightarrow F_2 \,  \sBC(w_{I}) \cdots \rightarrow F_k \,  \sBC(w_{I}) =  \sBCU(w_{I}). \]
\end{defn}

\medskip
\begin{remark}
The poset $\I$ is an iterated join of the trivial one-element poset (see remark \ref{join}). It follows that $\overline{\mathcal{I}}$ is a finite CW poset. It is now straightforward to see from theorem \ref{CWP} that the associated graded of the filtration described above is given by the cofiber sequence
\[ F_{t-1} \,  \sBC(w_{I}) \longrightarrow F_t \,  \sBC(w_{I}) \longrightarrow \bigvee_{{J} \in \I^t/\I^{t-1} }\Sigma^t \BC(w_{J}). \]
\end{remark}

\medskip
\noindent
For the purposes of the rest of the article, it is helpful to normalize definition \ref{SBC}. 

\medskip
\begin{defn} \label{stableLI} (Normalized strict broken symmetries)

\noindent
Given an element $w \in \Br(G)$, we define the normalized $G$-spectrum of strict broken symmetries
\[ \sNBC(w) := \Sigma^{l(w_I)} \sBC(w_I)[\varrho_I], \]
where $w_I$ is any presentation of $w$, and $l(w_I)$ stands for the integer $l_-(w_I) - 2l_+(w_I)$ with $l_+(w_I)$ denoting the number of positive exponents and $l_-(w_I)$ denoting the number of negative exponents in the presentation $w_I$ for $w$ in terms of the generators $\sigma_i$. Here $\sBC(w_I)[\varrho_I]$ denotes the shift in indexing given by $F_t \, \sBC(w_I)[\varrho_I] := F_{t+\varrho_I} \, \sBC(w_I)$ where the integer $\varrho_I$ that induces the shift is given by one-half the difference between the cardinality of the set $I$, $|I|$, and the mimimal word length $|w|$, of $w \in \Br(G)$
\[ \varrho_I = \frac{1}{2} (|I| - |w|). \]
\end{defn}

\medskip
\noindent
The notation for the normalization given above depends only on $w \in \Br(G)$, but makes no reference to the presentation of $w$. For this definition to make sense, one needs to verify that the filtered spectrum $\sNBC(w)$ is independent of the presentation. That is in fact the case, but the proof of that fact will take up the next several sections

\medskip
\begin{thm} \label{main1}
Given a braid element $w \in \Br(G)$, with two presentations $w = w_I = w_{I'}$, then the filtered $G$-spectra $\Sigma^{l(w_I)} \sBC(w_I)[\varrho_I]$ and $\Sigma^{l(w_{I'})} \sBC(w_{I'})[\varrho_{I'}]$ are quasi-equivalent, where quasi-equivalence is defined in definition \ref{quasi-equiv}.
\end{thm}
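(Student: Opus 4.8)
\noindent
The plan is to reduce the statement to a finite list of local modifications of a braid word, and to check that the normalized filtered spectrum is unchanged — up to quasi-equivalence — under each of them, while tracking the normalization constants. The first step is group-theoretic. Since $\Br(G)$ is presented by the generators $\sigma_i$ subject only to the braid relations, a standard fact about group presentations gives that two words $w_I$ and $w_{I'}$ in the letters $\sigma_i^{\pm 1}$ represent the same element of $\Br(G)$ if and only if one can be obtained from the other by a finite sequence of \emph{elementary moves}, each carried out on a subword while the rest of the word (its ``context'') is held fixed: (i) replacing a subword $\sigma_i\sigma_j\sigma_i\cdots$ of length $m_{ij}$, which is one side of a braid relation, by the other side $\sigma_j\sigma_i\sigma_j\cdots$, or the reverse; and (ii) inserting or deleting a canceling pair $\sigma_i^{\epsilon}\sigma_i^{-\epsilon}$. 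Because quasi-equivalence of filtered $G$-spectra (definition \ref{quasi-equiv}) is an equivalence relation — a zig-zag of maps with acyclic fibres or cofibres can be reversed, and two such can be concatenated — it is enough to show that $\Sigma^{l(w_I)}\sBC(w_I)[\varrho_I]$ is preserved up to quasi-equivalence under a single move of type (i) or (ii) applied inside an arbitrary context.

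\noindent
The second step is to unwind what each move demands of the underlying (unnormalized) spectra. The assignment $\sigma_i\mapsto 1$ defines a homomorphism $\Br(G)\to\Z$ whose value on $w$ equals $l_+(w_I)-l_-(w_I)$ for every presentation, so this integer depends only on $w$; hence $|I|-|w|$ is always a nonnegative \emph{even} integer and $\varrho_I=\frac{1}{2}(|I|-|w|)$ is a well-defined nonnegative integer. A move of type (i) changes neither $|I|$ nor the numbers $l_{\pm}$ of positive and negative exponents, so $\Sigma^{l(w_I)}$ and the reindexing $[\varrho_I]$ are untouched and one must produce a quasi-equivalence of \emph{filtered} $G$-spectra $\sBC(w_I)\simeq\sBC(w_{I'})$ respecting the filtration. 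A move of type (ii) lowers $|I|$ by $2$ and lowers $l_+$ and $l_-$ by $1$ each, hence raises $l(w_I)$ by $1$ and lowers $\varrho_I$ by $1$; thus one must produce a quasi-equivalence $\sBC(w_I)[\varrho_I]\simeq\Sigma^{1}\sBC(w_{I'})[\varrho_I-1]$. In both cases it is worth keeping in mind that quasi-equivalence only records the filtration through its associated graded chain complex up to chain contractibility, so the real task is to supply a filtered map (or zig-zag) between the two sides whose fibre is acyclic in the sense of the excerpt.

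\noindent
These two invariance statements are exactly the content of sections \ref{Braid}--\ref{Inverse}. For a move of type (i) the comparison is constructed by matching up the cube-shaped diagrams $J\mapsto\BC(w_J)$ over the subword posets $2^{I}$ and $2^{I'}$ of definition \ref{poset}: away from the $m_{ij}$ letters being permuted the two diagrams coincide, and over the local faces spanned by those letters one builds an equivalence of diagrams from the geometry of the rank-two Levi subgroup generated by $G_i$ and $G_j$, with the Pontrjagin--Thom maps $\pi_i,\pi_j$ of claim \ref{PT1} appearing on the negative letters; passing to homotopy colimits and then to cofibres yields the filtered quasi-equivalence. For a move of type (ii) the key input is that the two directions of the cube coming from the canceling pair $\sigma_i^{\epsilon}\sigma_i^{-\epsilon}$ produce, after passing to strict broken symmetries, a zig-zag with acyclic fibres relating $\sBC(w_I)$ to the shift $\Sigma^{1}\sBC(w_{I'})[\varrho_I-1]$, the maps involved being the inclusion $T_+\hookrightarrow {G_i}_+$ and the Pontrjagin--Thom map $\pi_i$ of claim \ref{PT1}; the factoring-out of $T$-holonomy sectors that defines the strict construction is what converts this into the single suspension and one-step filtration shift identified in the second step.

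\noindent
The main obstacle is not any individual comparison but the requirement that they all be produced \emph{naturally in the context word}: a move taking a subword $u$ of $aub$ to $v$ must induce a quasi-equivalence $\BC(w_{aub})\simeq\BC(w_{avb})$, and must do so compatibly with the structure maps of definition \ref{poset} that assemble these spectra — in particular with the differential $d_1$. Concretely this forces each comparison to be promoted to an equivalence of diagrams over the relevant sub-posets of $2^{I}$, compatible with the $\wedge_T$-operation along the fixed context factors and with the two-sided $T$-action. Organizing this coherence — rather than verifying any braid relation on the bare top-level spectrum — is the heart of the proof, and it is precisely the coherence issue acknowledged in connection with section \ref{Braid}.
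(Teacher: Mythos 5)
Your reduction of the theorem to two local moves — braid relations and insertion/deletion of canceling pairs — and your bookkeeping of how $l(w_I)$ and $\varrho_I$ transform under each move are both correct, and your outline of the cancelation move (ii) matches the paper's Theorem \ref{inv rel}: the inclusion $T_+ \hookrightarrow {G_i}_+$ and the Pontrjagin--Thom retraction $\pi_i$ supply the splittings that make the cofiber acyclic, producing the quasi-equivalence $\sBC(w_{I_\pm}) \simeq \Sigma\, \sBC(w_{I_{red}})[-1]$ that absorbs the normalization shift.

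There is a genuine gap, however, in your account of move (i), which is the technical heart of the argument. You claim that over the faces spanned by the $m_{ij}$ permuted letters ``one builds an equivalence of diagrams from the geometry of the rank-two Levi subgroup,'' and that the two diagrams $J\mapsto\BC(w_J)$ over $2^{\mathcal{O}^{(i,j)}}$ and $2^{\mathcal{O}^{(j,i)}}$ ``coincide'' away from those letters. But the two local diagrams are not equivalent, even abstractly: already for $m_{ij}=3$ the vertices of the cube over $\{i,j,i\}$ involve two copies of $G_i$ and one of $G_j$, while those over $\{j,i,j\}$ involve two copies of $G_j$ and one of $G_i$, so there is no objectwise identification to promote to an equivalence of functors. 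The paper's proof (Theorem \ref{braid rel}) resolves this by constructing a chain of $m_{ij}$ intermediate filtered spectra — the ``strict broken Schubert spectra'' $\sBS^{(i,j,m)}(w_I)$ indexed over posets $\Jj^{(i,j)}_m = 2^{I^{(i,j,m)}}\times\mathcal{O}^{(i,j,m)}_{red}$ that mix subsets of the context with \emph{reduced} subsequences of the braid word — and then linking consecutive terms by a zig-zag $\pi_m,\iota_m$ of elementary quasi-equivalences. The geometric input that makes those fibers acyclic is Lemma \ref{elem q-equiv}, a pushout statement about the Schubert varieties $\mathcal{X}_K$ and their $T\times T$-lifts $\Sh_K$ proved via the Bruhat cell decomposition. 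None of this interpolation machinery appears in your proposal, and without it there is no mechanism to compare the two sides of the braid relation. So while the reduction, the normalization bookkeeping, the treatment of move (ii), and the correctly flagged coherence concern (naturality in the context word) are all in order, the argument as written does not prove invariance under the braid relations.
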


\medskip
\noindent
Before we move ahead to the next section that describes the notion of equivalence we work with, let us actually study the underlying homotopy type of the top filtration given by $\Sigma^{l(w_I)} \sBCU(w_I)$. In fact, this homotopy type has a nice description in terms of a Thom spectrum. We have

\medskip
\begin{thm} \label{main1a}
Given an indexing set $I = \{\epsilon_1 i_1, \ldots, \epsilon_k i_k \}$, Let $V_I$ denote the vitrual representation of $T$ given by a vitual sum of root spaces
\[ V_I = \sum_{j \leq k} \epsilon_j w_{I_{j-1}}(\alpha_{i_j}) , \quad \mbox{where} \quad w_{I_{j-1}} = \sigma_{i_1} \ldots \sigma_{i_{j-1}}, \quad w_{I_0} = id, \]
and where $w_{I_{j-1}} (\alpha_{i_j})$ denotes the root space for the root given by the $w_{I_{j-1}}$-translate of the simple root $\alpha_{i_j}$. Let $|V_I|$ denote the virtual dimension of $V_I$. Then the $G$-equivariant homotopy type of $\Sigma^{l(w_I)} \sBCU(w_I)$ is given by the equivariant Thom spectrum 
\[ \Sigma^{l(w_I)} \sBCU(w_I) = \Sigma^{-|V_I|} G_+ \wedge_T (S^{V_I} \wedge T(w)_+), \]
where $S^{V_I}$ denotes the sphere spectrum for the virtual $T$-representation $V_I$, and $T(w)$ denotes the twisted conjugation action of $T$ on itself
\[ t (\lambda) := w^{-1} t w t^{-1} \,  \lambda \quad \mbox{where} \quad w = \sigma_{i_1} \ldots \sigma_{i_k}, \quad t \in T, \quad \lambda \in T(w). \]
\end{thm}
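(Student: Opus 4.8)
The plan is to compute the homotopy colimit in Definition~\ref{SBC} directly, using the explicit cellular structure of the poset $\I$ and the Pontrjagin--Thom maps $\pi_i$ of Claim~\ref{PT1}. The key observation is that the $T$-spectrum $\BC_T(w_{I^+})$ is an iterated balanced smash product $H_{i_1} \wedge_T \cdots \wedge_T H_{i_k}$, where each factor $H_{i_j}$ is either ${G_{i_j}}_+$ (positive exponent) or $S^{-\zeta_{i_j}} \wedge {G_{i_j}}_+$ (negative exponent); correspondingly $\I$ records, in each coordinate, either the forward inclusion $T_+ \to {G_{i_j}}_+$ (for positive entries one may \emph{drop} the letter) or the collapse $S^{-\zeta_{i_j}} \wedge {G_{i_j}}_+ \xrightarrow{\pi_{i_j}} T_+$ (for negative entries one may \emph{adjoin} the letter). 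In each coordinate separately, the cofiber of $T_+ \to {G_{i_j}}_+$ is the Thom space of the root-space bundle over $T$, i.e. $S^{\alpha_{i_j}} \wedge T_+$ with an appropriate two-sided $T\times T$-action; and $\pi_{i_j}$, being a Pontrjagin--Thom collapse onto $T \subset G_{i_j}$, has cofiber that is stably trivial after smashing with $S^{-\zeta_{i_j}}$, leaving just the $T_+$ summand (up to the shift $S^{-\zeta_{i_j}} \wedge S^{\zeta_{i_j}} \simeq S^0$ identified equivariantly in the proof of Claim~\ref{PT1}, which replaces the adjoint root space by $w_{I_{j-1}}(\alpha_{i_j})$ when the conjugation is transported across the preceding letters).

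The main computational step is therefore: \emph{identify the cofiber of $\pi$ one coordinate at a time.} I would argue by induction on $k$. Write $\BC_T(w_I) = \BC_T(w_{I'}) \wedge_T H_{i_k}$ where $I'$ drops the last letter, and observe that the full subcategory $\I$ decomposes into pieces indexed by whether the last coordinate is present/absent and whether the rest lies in $\I'$ or equals $I'^+$. A Fubini (iterated homotopy colimit) argument then expresses $\sBCU(w_I)$ as the cofiber, in the last coordinate, of the map built from $\pi_{i_k}$ (if $\epsilon_k = -1$) or from the inclusion $T_+ \hookrightarrow {G_{i_k}}_+$ (if $\epsilon_k = +1$), smashed over $T$ with $\sBCU(w_{I'})$ --- and the cofiber of each such elementary map is, as noted above, $S^{\epsilon_k \beta_k} \wedge T_+$ where $\beta_k$ is the root $\alpha_{i_k}$ conjugated by the preceding Weyl word, i.e. $\beta_k = w_{I_{k-1}}(\alpha_{i_k})$. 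Iterating, the $T$-spectrum $\sBCU(w_I)_T := \Sigma^{-2 l_+(w_I)} \cdots$ (tracking the normalizations) becomes $S^{V_I} \wedge T(w)_+$ once one checks that the residual copies of $T$ from each coordinate assemble, under the balanced smash products and the endpoint conjugation $T$-action, into precisely the twisted conjugation space $T(w)$ with $t\cdot\lambda = w^{-1}twt^{-1}\lambda$. Finally, inducing up along $T \subset G$ and collecting the suspension shifts --- $\Sigma^{l(w_I)}$ from the normalization against the $S^{-\zeta_{i_j}}$'s contributing $-|V_I|$ in total virtual dimension --- gives the stated formula $\Sigma^{l(w_I)}\sBCU(w_I) = \Sigma^{-|V_I|} G_+ \wedge_T (S^{V_I} \wedge T(w)_+)$.

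The step I expect to be the main obstacle is the \textbf{equivariant bookkeeping of the twisting}: verifying that the two-sided $T\times T$-actions on the individual factors ${G_{i_j}}_+$ and $S^{-\zeta_{i_j}}$, when glued via the balanced smash $\wedge_T$ and then collapsed by the $\pi_{i_j}$'s, conspire to produce the \emph{single} twisted conjugation action $t\cdot\lambda = w^{-1}twt^{-1}\lambda$ rather than some other $w$-twist. The identity $t_1 g_i t_2 = (\mathrm{Ad}(t_1)g_i)\,t_1 t_2$ from the proof of Claim~\ref{PT1} is the local model, but propagating it through $k$ composable balanced products requires carefully tracking how the Weyl-group translate $w_{I_{j-1}}$ enters the $j$-th root space --- this is exactly where the formula $V_I = \sum_j \epsilon_j w_{I_{j-1}}(\alpha_{i_j})$ comes from, and getting the order of the conjugations right (and matching it against the convention $w = \sigma_{i_1}\cdots\sigma_{i_k}$) is the delicate part. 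A secondary obstacle is confirming the homotopy colimit over $\I$ really is computed by the iterated one-variable cofibers, i.e. that the relevant subposets are cofinal/contractible where needed; this is a standard but not entirely trivial poset-cofinality check, which I would handle by exhibiting $\I$ as (the punctured cube of) a product of intervals and invoking the projection formula for $\hocolim$ against $\wedge_T$.
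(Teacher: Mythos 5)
Your proposal is essentially the same strategy as the paper's, and I believe it would go through. The paper also decomposes the punctured-cube colimit coordinatewise: it observes that the diagram that produces $\sBCU(w_I)$ (after adjoining the cone point) is a quotient of the $k$-fold product $P^{I}$ of the three-object pushout category $P = \{0 \leftarrow 1 \rightarrow \infty\}$, so that the colimit factors as the balanced smash $G_+ \wedge_T P(H_{i_1}) \wedge_T \cdots \wedge_T P(H_{i_k})$ of the elementary one-variable pushouts. Your iterated Fubini, peeling off the last letter and smashing with the one-variable cofiber, is the same decomposition performed one coordinate at a time; it is slightly less tidy because you must check poset cofinality at each stage, whereas the paper's ``suspension of $\I$ $=$ quotient of $P^I$'' identification handles all coordinates simultaneously and reduces the entire colimit to the universal property of pushouts.

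One local imprecision worth flagging: in your first paragraph you say the cofiber of $\pi_{i_j}$ is ``stably trivial\ldots leaving just the $T_+$ summand.'' That is not right as stated. Using the cofibration $\sigma_i T_+ \to {G_i}_+ \to S^{\zeta_i}\wedge T_+$ from Claim~\ref{PT1}, smashing with $S^{-\zeta_i}$ shows that the fiber of $\pi_i \colon S^{-\zeta_i}\wedge {G_i}_+ \to T_+$ is $S^{-\zeta_i}\wedge \sigma_i T_+$, so the relevant pushout is $\Sigma S^{-\zeta_i}\wedge \sigma_i T_+$ (and likewise $S^{\zeta_i}\wedge \sigma_i T_+$ in the $\epsilon_i = +1$ case). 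You then state the correct answer $S^{\epsilon_k\beta_k}\wedge T_+$ in your second paragraph, so this looks like a slip rather than a real gap, but note that the $\Sigma$'s from the negative-exponent cofibers are what produce the $\Sigma^{m}$ in the paper's last line and are exactly absorbed by the normalizing suspension $\Sigma^{l(w_I)}$ against $\Sigma^{-|V_I|}$; if you drop them the dimensions will not balance. Finally, the ``order of conjugations'' concern you raise is handled exactly as you anticipate: the paper keeps the untwisted $S^{\pm\zeta_{i_j}}\wedge \sigma_{i_j}T_+$ in each slot and only at the end commutes the $\sigma_{i_j}$'s to one side, which transports the root spaces to $w_{I_{j-1}}(\alpha_{i_j})$ and assembles the residual torus into $T(w)$; your version, twisting as you go, lands in the same place.
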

\begin{proof}
Recall the definition of $\sBCU(w_I)$ via the cofiber sequence
\[ \hocolim_{J \in \I} \BC(w_{J}) \longrightarrow \BC(w_{I^+}) \longrightarrow \sBCU(w_{I}), \]
where $I^+$ is the terminal element of the set $2^I$ as defined in \ref{poset}, and $\I$ denotes the subcategory of $2^I$ consisting of all objects besides $I^+$. We may describe $\sBCU(w_{I})$ as a pushout 
\[ \BC(w_{I^+})\longleftarrow  \hocolim_{J \in \I} \BC(w_{J}) \longrightarrow  \ast. \]
Alternatively, $\sBCU(w_{I})$ can be seen as a homotopy colimit of the canonical functor that extends $\BC(w_J)$ over the poset $\Sigma \I$ given by the suspension of $\I$ obtained by adjoining two new vertices $\{ 0, \infty \}$ that admit morphisms from each object of $\I$. The poset $\Sigma \I$ can be described as a quotient of the poset $P^I$, where $P$ is the pushout category with three objects $\{ 0, 1, \infty \}$, and $P^I$ is the $k$-fold product of $P$. We think of objects of $P^I$ as functions on $I$ with values in the object set of $P$ and morphisms $\varphi < \psi$ if $\varphi(i) \leq \psi(i)$ for all $i \in I$. One recovers $\Sigma \I$ by identifying all functions taking the value $\infty$ on any element of $I$ with a distinguished object (also called $\infty$). An explicit identification of this quotient with $\Sigma \I$ is given by defining the function $\varphi \in P^I$ corresponding to the subset $J = \{ \epsilon_{i_{j_1}} i_{j_1}, \ldots \epsilon_{i_{j_s}} i_{j_s} \} \subseteq I$ as $\varphi(\epsilon_s i_s) = 1$ if $\epsilon_s i_s \in J$ and $\epsilon_s < 0$ or $\epsilon_s i_s \notin J$ and $\epsilon_s > 0$. Define $\varphi(\epsilon_s i_s) = 0$ otherwise. In particular, the maximal subset $I^+$ represents the unique function taking the value $0$ on all elements of $I$. Now recall that the spectrum $\BC(w_J)$ 
\[  \BC(w_{J}) := G_+ \wedge_T \BC_T(w_J),  \, \, \mbox{where} \, \, \]
\[  \BC_T(w_{J}) := H_{i_{j_1}} \wedge_T \ldots  \wedge_T H_{i_{j_s}},  \, \, \mbox{and} \, \, H_i = S^{-\zeta_i} \wedge \, {G_i}_+,  \, \, \mbox{if} \, \, \epsilon_i = -1, \, \,  H_i = {G_i}_+ \, \,  \mbox{else}. \]
Since $P^I$ is a product of pushout categories, and the functor describing $\sBCU(w_I)$ decomposes as a smash product of functors, we may express the homotopy colimit as
\[ \sBCU(w_I) = G_+ \wedge_T P(H_{i_1}) \wedge_T \ldots  \wedge_T P(H_{i_k}), \]
where $P(H_i)$ are the pushout diagrams constructed from the elementary morphisms described in definition \ref{poset}. More precisely, if $\epsilon_i = -1$, then we have
\[
\xymatrix{
 S^{-\zeta_i} \wedge \, {G_i}_+ \ar[d] \ar[r] &  T_+ \ar[d] \\
\ast     \ar[r] & P(H_i).
}
\]
Since $S^{\zeta_i} \wedge T_+$ is obtained from $G_i$ by pinching out the $T \times T$-subspace given by the left coset $\sigma_i T$ (see proof of claim \ref{PT1}), the above pushout is equivalent to the spectrum $\Sigma S^{-\zeta_i} \wedge \sigma_i \, T_+$. On the other hand, if $\epsilon_i = 1$, then the pushout diagram is given by
\[
\xymatrix{
 T_+ \ar[d] \ar[r] &  {G_i}_+ \ar[d] \\
\ast     \ar[r] & P(H_i).
}
\]
which is equalent to the spectrum $S^{\zeta_i} \wedge \sigma_i T_+$, again using the proof of claim \ref{PT1}. Since the $T$-representation $\zeta_i$ is isomorphic to $\alpha_i$, we may smash them together to obtain
\[ \sBCU(w_I) = \Sigma^m G_+ \wedge_T (S^{\epsilon_1\zeta_{i_1}} \wedge \sigma_{i_1} T_+) \wedge_T \ldots  \wedge_T (S^{\epsilon_k\zeta_{i_k}} \wedge \sigma_{i_k} T_+), \]
where $m$ is the number of negative exponents. Now suspending by a sphere of dimension $l(w_I)$, and collecting all the terms $\sigma_i T_+$ is easily seen to yield the result we seek to prove. 
\end{proof}

\section{Some filtered algebra} \label{Filteredalgebra}

\medskip
\noindent
Before we address the the particular properties enjoyed by $\sBC(w_I)$, let us digress briefly into the theory of filtered $G$-spectra so as to define a lax notion of equivalence of filtered equivariant $G$-spectra that would be relevant for our purposes. 

\medskip
\noindent
By a bounded-below filtered $G$-spectrum $X := \{ F_t X \}$, we mean a filtered $G$-spectrum with the filtration being the trivial spectrum below some fixed integer $n$. We have a collection of cofiber sequences:
\[  \cdots F_t X \rightarrow F_{t+1} X \rightarrow F_{t+1} X / F_t X \rightarrow \Sigma F_t X \cdots \]
which assemble to a collection of maps 
\[ \partial_t : F_{t+1} X/ F_t X  \longrightarrow \Sigma F_t X \longrightarrow \Sigma (F_t X / F_{t-1} X). \]
Furthermore, $\partial_{t-1} \circ \partial_t$ is null homotopic for each $i$. In particular one obtains a bounded-below graded chain complex in the homotopy category of $G$-spectra associated to $X$.

\medskip
\begin{defn} (The associated graded and the shift functor for filtered spectra) \label{shift}

\noindent
The associated graded chain complex of a bounded-below filtered $G$-spectrum $X$ is defined as
\[ \{ \Gr_t (X) \} := \{ \Sigma^{-t} (F_t X/ F_{t-1} X), \partial_{t-1} \}. \]
Given a bounded below filtered $G$-spectrum $X$, we define the shifted spectrum $X[\varrho]$ as
\[ F_t X[\varrho] := F_{t+\varrho} X. \]
\end{defn}

\begin{remark} \label{regrading}
Notice that desuspension and shift together amount to a reindexing of the associated graded complex. In other words, we have $\{ \Gr_t (\Sigma^{-\varrho} X[\varrho]) \} = \{ \Gr_{t+\varrho} (X) \}$. 
\end{remark}

\medskip
\begin{example} \label{FiltBC}
The associated graded spectrum associated to $\sBC(w_I)$ is given by
\[ \Gr_t (\sBC(w_I)) = \bigvee_{J \in \I^t/\I^{t-1}} \BC(w_{J}), \]
with $\partial_t$ being induced by a signed sum along nontrivial indecomposable morphisms. 
\end{example}

\medskip
\begin{defn} \label{quasi-equiv} (Acyclicity and quasi-equivalence of filtered $G$-spectra) 

\noindent
A filtered $G$-spectrum $X$ is said to be acyclic if the associated graded complex $\Gr_t(X)$ admits a ``null chain homotopy" $h_t$ for all $t \geq 0$, i.e. one whose graded commutator with $\partial$ is an equivalence
\[ h_t : \Gr_t (X)  \longrightarrow \Gr_{t+1} (X), \quad \partial_t \circ h_t + h_{t-1} \circ \partial_{t-1} : \Gr_t (X) \llra{\simeq} \Gr_t (X). \]
A map of filtered $G$-spectra $\rho : X \longrightarrow Y$ is defined as a collection of maps $\rho_t : F_t X \longrightarrow F_t Y$, compatible with the filtration. The  map $\rho$ is said to be an elementary quasi-equivalence if the filtered spectrum defined by the fibers (or cofibers) of $\rho_t$, is acyclic. Two filtered $G$-spectra are said to be quasi-equivalent if they are connected by a zig-zag of elementary quasi-equivalences. We will refer to usual levelwise equivalences as honest equivalences. 
\end{defn}

\medskip 
\begin{remark} \label{limiting}
Notice that the definitions imply that if two spectra $X$ and $Y$ with finite filtrations are quasi-equivalent, then their limiting $G$-spectra $X_\infty := \hocolim_t F_t X$ and $Y_\infty := \hocolim_t F_t Y$ respectively are $G$-equivariantly homotopy equivalent. The converse need not be true.
\end{remark}

\medskip
\begin{claim} \label{q-isom}
Let $\rho : X \longrightarrow Y$ be an elementary quasi-equivalence of filtered $G$-spectra. Given a $G$-equivariant cohomology theory $\E_G$ so that the map of cochain complexes induced by $\rho$ 
\[ \rho^* : \E^*_G(\Gr_t(Y)) \longrightarrow \E^*_G(\Gr_t (X)), \]
is either injective for all $t$, or surjective for all $t$. Then $\rho^*$ is a quasi-isomorphism. 
\end{claim}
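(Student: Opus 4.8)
The plan is to reduce the statement to a short exact sequence of cochain complexes together with the elementary fact that acyclicity forces a cochain complex to have vanishing cohomology. Let $Z$ be the filtered $G$-spectrum with $F_t Z := \mathrm{fib}(\rho_t)$; by Definition \ref{quasi-equiv} we may work with the fiber rather than the cofiber (they differ by a suspension, and acyclicity of one is equivalent to that of the other), and $Z$ is acyclic. The first step is to check that the levelwise fiber sequences $F_t Z \to F_t X \to F_t Y$ descend, on associated graded pieces, to cofiber sequences of $G$-spectra
\[ \Gr_t(Z) \longrightarrow \Gr_t(X) \longrightarrow \Gr_t(Y), \]
compatibly, as $t$ varies, with the differentials $\partial$ of Definition \ref{shift}. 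The first assertion is an iterated-cofiber computation: writing $F_s Y = \mathrm{cofib}(F_s Z \to F_s X)$ for $s = t-1, t$ and rearranging the resulting square, $\mathrm{cofib}(F_{t-1}Y \to F_t Y) \simeq \mathrm{cofib}\big(F_t Z/F_{t-1}Z \to F_t X/F_{t-1}X\big)$, which after desuspending by $t$ is the asserted cofiber sequence; the compatibility with $\partial$ is just naturality of the connecting maps of cofiber sequences, applied to the strict maps of filtered spectra $\rho$ and $Z \to X$.

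Applying the (exact, additive) cohomology theory $\E^*_G$ turns each of these cofiber sequences into a long exact sequence, and by naturality these organize, for each fixed internal degree $s$, into a long exact sequence of cochain complexes with differential in the $t$-direction,
\[ \cdots \to \E^{s-1}_G(\Gr_\bullet Z) \xrightarrow{\delta} \E^s_G(\Gr_\bullet Y) \xrightarrow{\rho^*} \E^s_G(\Gr_\bullet X) \to \E^s_G(\Gr_\bullet Z) \xrightarrow{\delta} \E^{s+1}_G(\Gr_\bullet Y) \to \cdots. \]
If $\rho^*$ is injective on $\E^*_G(\Gr_t Y)$ for every $t$ (which, as a statement about graded groups, means in every internal degree), then all connecting maps $\delta\colon \E^{s-1}_G(\Gr_\bullet Z)\to\E^s_G(\Gr_\bullet Y)$ vanish and the sequence splits into short exact sequences $0 \to \E^*_G(\Gr_\bullet Y) \xrightarrow{\rho^*} \E^*_G(\Gr_\bullet X) \to \E^*_G(\Gr_\bullet Z) \to 0$ of cochain complexes; if instead $\rho^*$ is surjective for every $t$, the maps $\E^*_G(\Gr_\bullet X)\to\E^*_G(\Gr_\bullet Z)$ vanish and one gets short exact sequences $0 \to \E^{*-1}_G(\Gr_\bullet Z) \xrightarrow{\delta} \E^*_G(\Gr_\bullet Y) \xrightarrow{\rho^*} \E^*_G(\Gr_\bullet X) \to 0$.

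Next, acyclicity of $Z$ provides maps $h_t\colon \Gr_t(Z)\to\Gr_{t+1}(Z)$ whose graded commutator with $\partial$ is an equivalence $e_t$. Applying $\E^*_G$ gives a degree $-1$ operator $h^*$ on the cochain complex $(\E^*_G(\Gr_\bullet Z), d)$, $d = \partial^*$, with $h^*d + dh^* = e^*$ a degreewise automorphism; since $d^2 = 0$ we have $de^* = e^*d = dh^*d$, so $e^*$ commutes with $d$, and then for any cocycle $z$ one computes $z = e^*\big((e^*)^{-1}z\big) = d\,h^*\big((e^*)^{-1}z\big)$, whence $H^*(\E^*_G(\Gr_\bullet Z)) = 0$. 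Feeding this into the long exact cohomology sequence of the short exact sequence of cochain complexes from the previous paragraph then shows that $\rho^*$ induces an isomorphism on cohomology, i.e.\ $\rho^*$ is a quasi-isomorphism.

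The only non-formal ingredient, and hence the main obstacle, is the first step: verifying that passing to associated-graded objects carries the levelwise fiber sequence $Z \to X \to Y$ to a cofiber sequence of chain complexes in $\mathrm{Ho}(G\text{-spectra})$ compatibly with $\partial$, so that applying $\E^*_G$ yields an honest short exact sequence of cochain complexes rather than a diagram commuting only up to unspecified homotopies. Treating each degree $t$ separately and invoking naturality of the connecting maps $\partial$ throughout keeps this under control; everything downstream is routine homological algebra, the one mild wrinkle being that acyclicity supplies a contraction only up to the automorphism $e^*$ rather than up to the identity, which — as above — still annihilates cohomology.
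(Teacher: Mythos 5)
Your proof follows essentially the same route as the paper: form the levelwise fiber $Z$ of $\rho$, pass to a short exact sequence of associated-graded cochain complexes, note that $\E^*_G(\Gr_\bullet Z)$ has vanishing cohomology, and conclude from the long exact sequence. You are more careful than the paper on two points it glosses over — deriving short exactness from the long exact sequence of $\E^*_G$ using the injectivity/surjectivity hypothesis, and checking that the null chain homotopy, which only satisfies $h^*d+dh^*=e^*$ for an automorphism $e^*$ rather than the identity, still kills cohomology — but these are elaborations of the same argument, not a different approach.
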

\begin{proof}
The proof is straightforward. We prove it under the injectivity assumption, the surjective case is analogous. Assuming injectivity, we notice that there is a short exact sequence of cochain complexes
\[ 0 \rightarrow \E^*_G(\Gr_t(Y)) \llra{\rho^\ast} \E^*_G(\Gr_t(X)) \longrightarrow \E^*_G(\Gr_t(Z)) \rightarrow 0,  \]
where $Z := \{ F_t Z \}$ is the fiber of $\rho$. By definition of acyclicity, the cochain complex $\E^*_G(\Gr_t(Z))$ is acyclic. The long exact sequence in cohomology therefore implies that $\rho^*$ is a quasi-isomorphism. 
\end{proof}

\medskip
\noindent
It is easy to see why the requirement of injectivity or surjectivity in the above claim is necessary. For example, given a filtered spectrum $X$, consider the canonical map of filtered spectra given by the shift that maps each filtrate into the next
\[ s : X \longrightarrow X[1], \quad \quad F_t X \longrightarrow F_{t+1} X. \]
It is straightforward to see that $s$ is an elementary quasi-equivalence. However, the induced map $s^\ast$ is trivial on the associated graded complex in any cohomology theory $\E_G$. We show in the following claim that the above example is universal in a suitable sense.

\medskip
\begin{claim} \label{q-isom2}
Let $\rho : X \longrightarrow Y$ be an elementary quasi-equivalence of filtered $G$-spectra, then there exists a filtered $G$-spectrum $P_\rho$ endowed with elementary quasi-equivalences
\[ \iota_Y : Y \longrightarrow P_\rho, \quad \quad \iota_X : X[1] \longrightarrow P_\rho, \quad \quad \mbox{with} \quad \quad \iota_X \circ s \simeq \iota_Y \circ \rho, \]
where $s$ is the shift map defined above. In particular, $P_\rho$ furnishes a quasi-equivalence between $Y$ and $X[1]$. Furthermore, given a $G$-equivariant cohomology theory $\E_G$, if the map of the associated graded cochain complexes induced by $\rho$
\[ \rho^* : \E^*_G(\Gr_t(Y)) \longrightarrow \E^*_G(\Gr_t (X)), \]
 is trivial, then both maps $\iota_Y^\ast$ and $\iota_X^\ast$ are quasi-isomorphisms on the associated graded complex. 
\end{claim}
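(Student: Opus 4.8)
The plan is to construct $P_\rho$ as a filtered version of the reduced mapping cylinder of $\rho$, set up so that it receives elementary quasi-equivalences from both $Y$ and the shift $X[1]$. Concretely, I would define $F_t P_\rho$ as the double mapping cylinder (homotopy pushout) of the span
\[ F_t Y \xleftarrow{\ \rho_t\ } F_t X \xrightarrow{\ s_t\ } F_{t+1} X, \]
where $s_t : F_t X \to F_{t+1}X$ is the structure map of the filtration on $X$; equivalently $F_t P_\rho = F_t Y \cup_{F_t X} \bigl( F_t X \wedge [0,1]_+ \bigr) \cup_{F_t X} F_{t+1} X$. These assemble into a filtered $G$-spectrum because the span is natural in $t$, and there are the evident inclusions $\iota_Y : Y \to P_\rho$ (from the left leg) and $\iota_X : X[1] \to P_\rho$ (from the right leg, recalling $F_t X[1] = F_{t+1}X$). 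The relation $\iota_X \circ s \simeq \iota_Y \circ \rho$ holds by construction, the homotopy being provided by the cylinder coordinate. First I would verify that $\iota_Y$ and $\iota_X$ are elementary quasi-equivalences: the cofiber of $\iota_{Y,t}$ is (up to equivalence) the cofiber of $s_t : F_t X \to F_{t+1}X$, i.e. $F_{t+1}X/F_tX = \Sigma^{t+1}\Gr_{t+1}(X)$, and the cofiber of $\iota_{X,t}$ is the cofiber of $\rho_t$, i.e. $F_t Z$ where $Z$ is the (co)fiber filtered spectrum of $\rho$. Since $\rho$ is an elementary quasi-equivalence, $Z$ is acyclic by hypothesis, so $\iota_X$ is an elementary quasi-equivalence. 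For $\iota_Y$, the associated graded cofiber is $\{\Gr_{t+1}(X)\}$ with differential $\partial$; this is acyclic essentially because the identity null-homotopy $h_t = \mathrm{id}$ on the reindexed complex has graded commutator with $\partial$ equal to the identity — this is the observation, made explicit in the paragraph after Claim \ref{q-isom}, that the shift map $s$ is always an elementary quasi-equivalence. Combining, $P_\rho$ furnishes the desired quasi-equivalence between $Y$ and $X[1]$.

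For the last sentence, suppose $\rho^* : \E^*_G(\Gr_t(Y)) \to \E^*_G(\Gr_t(X))$ is zero. I would compute the effect of $\iota_Y$ and $\iota_X$ on associated graded $\E_G$-cohomology directly. On associated graded, $\Gr_t(P_\rho)$ sits in a cofiber sequence with $\Gr_t(Y)$ and $\Gr_t(X[1]) = \Gr_{t+1}(X)$ meeting along $\Gr_t(X)$; applying $\E_G^*$ and the hypothesis that $\rho^*$ vanishes, the long exact sequence (Mayer–Vietoris for the homotopy pushout defining each $F_t P_\rho$, then passing to the graded pieces) splits into short exact sequences
\[ 0 \to \E^*_G(\Gr_t(Y)) \xrightarrow{\iota_Y^*} \E^*_G(\Gr_t(P_\rho)) \to \E^*_G(\Gr_{t}(X)) \to 0 \]
wait — more carefully, with $\rho^*=0$ the contribution of $\Gr_t(X)$ to $\E^*_G(\Gr_t P_\rho)$ is absorbed so that $\E^*_G(\Gr_t P_\rho) \cong \E^*_G(\Gr_t Y) \oplus \E^*_G(\Gr_{t+1}X)[\text{shift}]$ as complexes, and both $\iota_Y^*$ (projection onto the first summand, split injective after identifying, hence one applies Claim \ref{q-isom}) and $\iota_X^*$ (projection onto the second, surjective) satisfy the injectivity/surjectivity hypothesis of Claim \ref{q-isom}. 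Hence both are quasi-isomorphisms by Claim \ref{q-isom}.

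The main obstacle I anticipate is bookkeeping the grading shifts and signs so that the Mayer–Vietoris/long-exact-sequence argument genuinely produces the clean short exact sequences needed to invoke Claim \ref{q-isom}: one must be careful that "$\rho^*$ trivial on associated graded" feeds into the boundary maps of the pushout long exact sequence in exactly the way that kills the cross terms, and that the residual map is honestly levelwise injective (resp. surjective) as a map of cochain complexes and not merely injective on cohomology. A secondary technical point is ensuring the cylinder construction is compatible with the $G_+\wedge_T(-)$ presentation, i.e. that $P_\rho$ can itself be realized as induced from a $T$-spectrum when $\rho$ is — but since homotopy pushouts commute with the induction functor $G_+\wedge_T(-)$, this is automatic and need only be remarked.
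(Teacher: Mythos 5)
Your construction is exactly the paper's: define $F_t P_\rho$ as the homotopy pushout of $F_t Y \xleftarrow{\rho} F_t X \xrightarrow{s} F_{t+1}X$, then observe that the fiber (or cofiber) of $\iota_X$ is that of $\rho$, hence acyclic, and that of $\iota_Y$ is that of $s$, also acyclic, so both legs are elementary quasi-equivalences. For the final part the paper argues even more lightly than you do: it simply compares the long exact sequences in $\E_G$-cohomology for the two rows (and separately for the two columns) of the pushout square and reads off that, when $\rho^\ast=0$ on associated graded, both $\iota_X^\ast$ and $\iota_Y^\ast$ are \emph{surjective}; it then appeals to Claim \ref{q-isom}.

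Two local slips in your last paragraph, both of which you partially flagged as worries. First, the identification $\E^\ast_G(\Gr_t P_\rho) \cong \E^\ast_G(\Gr_t Y)\oplus \E^\ast_G(\Gr_{t+1}X)$ is not what Mayer--Vietoris gives you: with $\rho^\ast=0$ the MV sequence yields short exact sequences
\[
0 \to \E^{n-1}_G(\Gr_t X) \to \E^n_G(\Gr_t P_\rho) \xrightarrow{(\iota_Y^\ast,\iota_X^\ast)} \E^n_G(\Gr_t Y)\oplus \E^n_G(\Gr_{t+1}X)\to 0,
\]
so the map onto the sum is a surjection, not an isomorphism, and there is a genuine $\Gr_t X$ contribution sitting in the kernel. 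Second, ``$\iota_Y^\ast$ \ldots split injective'' should be ``surjective'': since $(\iota_Y^\ast,\iota_X^\ast)$ is surjective onto the direct sum, so is each component. Neither slip affects the conclusion, because surjectivity is equally acceptable as input to Claim \ref{q-isom}, but as stated the intermediate claim is wrong. Your closing remark about compatibility with the $G_+\wedge_T(-)$ presentation is correct and indeed automatic; the paper does not dwell on it.
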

\begin{proof}
Let us define the filtered $G$-spectrum by defining $\{ F_t P_\rho \}$ as the homotopy pushout

\[
\xymatrix{
 F_t X \ar[d]^{\rho} \ar[r]^s & F_t X[1] \ar[d]^{\iota_X} \\
 F_t Y   \ar[r]^{\iota_Y} & F_t P_\rho.
}
\]
By construction, the fiber of $\iota_X$ is the fiber of $\rho$, which is acyclic. Similarly, the fiber of $\iota_Y$ is the fiber of $s$, which is also acyclic. Hence, both $\iota_X$ and $\iota_Y$ are elementary quasi-equivalences. Now assume that $\rho$ is trivial in $\E_G$-cohomology. Then it is easy to see from comparing the long-exact sequences in cohomology for the two rows (resp. columns), that $\iota_X^\ast$ (resp. $\iota_Y^\ast$) is surjective on the associated graded complex. By claim \ref{q-isom}, it follows that they are quasi-isomorphisms. 
\end{proof}

\medskip
\section{Properties of strict broken symmetries: The Markov 1 property} \label{Markov 1}

\medskip
\noindent
Beginning with this section we shall prove various helpful properties of the $G$-spectrum of strict broken symmetries. To begin with, we will prove a Markov 1 type result which essentially says that $\sBC(w_{I})$ is equivalent to the spectrum $\sBC(w_{I^{\tau}})$, where $I^{\tau}$ is the sequence obtained by cyclicly permuting $I$. More precisely
\begin{defn} (The permuted poset $I^{\tau}$)

\noindent
Given an indexing sequence $I = \{ \epsilon_1 i_1, \epsilon_2 i_2, \ldots, \epsilon_k i_k \}$, we define the sequence
\[ I^{\tau} =  \{ \epsilon^\tau_1 i^\tau_1, \epsilon^\tau_2 i^\tau_2, \ldots, \epsilon^\tau_k i^\tau_k\} := \{ \epsilon_2 i_2, \epsilon_3 i_3, \ldots, \epsilon_k i_k, \epsilon_1 i_1 \}. \]
Given a subset $J \in 2^{I}$, let the image of $J$ under this permutation be denoted by $J^\tau \in 2^{I^{\tau}}$. This gives rise to an isomorphism of the posets $\tau : 2^{I} \longrightarrow 2^{I^{\tau}}$ which restricts to an isomorphism $\tau : \I \longrightarrow \I^\tau$, where $\I^\tau$ is the poset of all non-terminal objects in $2^{I^{\tau}}$. 
\end{defn}

\medskip
\noindent
With the above definition in place, we prove the Markov 1 property: 

\medskip
\begin{thm}\label{M1} The functors $\BC(w_{J})$ and $\BC(w_{J^\tau}) \circ \tau$ are equivalent. In particular, $\tau$ induces a levelwise (honest) equivalence of filtered $G$-spectra
\[ \tau_t : F_t \, \sBC(w_{I}) \llra{\simeq} F_t \, \sBC(w_{I^\tau}), \quad t \geq 0. \]
\end{thm}
\begin{proof}
We require a natural equivalence between the $G$-spectra $\BC(w_{J})$ and $\BC(w_{J^\tau})$. Let $J = \{\epsilon_{i_{j_1}} i_{j_1}, \ldots, \epsilon_{i_{j_s}} i_{j_s} \}$ be an element in $2^{I}$, so that $\tau(J) : = J^\tau$ is defined as follows
\[ J^\tau = \{ \epsilon^\tau_{i_{j_1-1}} i^\tau_{j_1-1}, \epsilon^\tau_{i_{j_2-1}} i^\tau_{j_2-1}, \ldots, \epsilon^\tau_{i_{j_s-1}} i^\tau_{j_s-1} \} \subseteq I^\tau, \quad \mbox{if} \quad j_1 > 1, \quad \mbox{and} \]
 \[ J^\tau = \{ \epsilon^\tau_{i_{j_2-1}} i^\tau_{j_2-1}, \ldots, \epsilon^\tau_{i_{j_s-1}} i^\tau_{j_s-1}, \epsilon^\tau_{k} i^\tau_k \} \subseteq I^\tau, \quad \mbox{if} \quad j_1=1.\]
Recall from definition \ref{BC} that 
\[  \BC(w_{J}) : G_+ \wedge_T (H_{i_{j_1}} \wedge_T H_{i_{j_2}} \wedge_T \ldots  \wedge_T H_{i_{j_s}}),  \, \, \mbox{where} \, \, H_i = S^{-\zeta_i} \wedge \, {G_i}_+,  \, \, \mbox{if} \, \, \epsilon_i = -1, \, \,  H_i = {G_i}_+ \, \,  \mbox{else}. \]
Let us first consider the case of a positive braid $w_{I}$, so that $H_{i_j} = {G_{i_j}}_+$ for all $j$. In that case, we define $\tau$ on the underlying topological space by
\[ \tau [(g, g_{i_{j_1}}, \ldots, g_{i_{j_s}})] = [(g , g_{i_{j_1}}, \ldots, g_{i_{j_s}})], \quad \mbox{if} \quad j_1 > 1, \quad \mbox{and} \]
\[ \tau [(g, g_{i_{j_1}}, \ldots, g_{i_{j_s}})] = [(gg_{i_1} , g_{i_{j_2}}, \ldots, g_{i_{j_s}}, g_{i_1})], \quad \mbox{if} \quad j_1=1,\]
where the right hand side is indexed by the subset $J^\tau := \tau(J)$. 

\medskip
\noindent
It is easy to see that this map is well defined. The map defined above extends (by permuting the equivariant spheres) to the equivariant vector bundle obtained by smashing this space with the equivivariant spheres of the form $S^{-\zeta_i}$. In other words, one may replace ${G_{i_j}}$ by $H_{i_j}$ in the above description to obtain a map covering the space level map described above. This defines the natural equivalence of functors we seek
\[ \tau : \BC(w_{J}) \longrightarrow \BC(w_{J^\tau}). \]
\end{proof}

\section{Properties of strict broken symmetries: Braid invariance} \label{Braid}

\medskip
\noindent
We now move towards showing that $\sBC(w_{I})$ depends only on the braid element $w$ and not on the presentation $I$ used to express it. This property requires proving two results. The first result would require showing that $\sBC(w_{I})$ is invariant under the braid relations, and the second result would require us to show invariance under the inverse relation, namely that one may contract successive terms in $I$ of the form $\{ \ldots, -i, i, \ldots \}$ or $\{ \ldots, i, -i, \ldots \}$ without changing the equivariant homotopy type up to quasi-equivalence. 

\medskip
\noindent
We begin our goal by first proving the following theorem on braid invariance:

\medskip
\begin{thm} \label{braid rel} For a fixed pair of indices $(i,j)$, let $I^{(i,j)}$ be an arbitrary sequence that contains the braid sequence $\mathcal{O}^{(i,j)} := \{ i,j,i,j, \ldots \}$ with $m_{ij}$-terms as a subsequence of consecutive terms. Define $I^{(j,i)}$ to be the sequence obtained by replacing the braid subsequence with its counterpart $ \{ j,i,j,i \ldots \}$ with $m_{ij}$-terms. Then the filtered $G$-spectra $\sBC(w_{I^{(i,j)}})$ and $\sBC(w_{I^{(j,i)}})$ are connected by a sequence of zig-zags of elementary quasi-equivalences. In particular, they are quasi-equivalent. 
\end{thm}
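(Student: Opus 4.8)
The goal is to show that applying a single braid relation to the indexing sequence produces a quasi-equivalent filtered spectrum. By the Markov 1 property (Theorem~\ref{M1}), we may cyclically rotate $I$, so without loss of generality the braid subsequence $\mathcal{O}^{(i,j)} = \{i,j,i,\ldots\}$ ($m_{ij}$ terms) sits at the \emph{end} of $I^{(i,j)}$, and the remaining prefix $I_0$ (the common part) is the same in both sequences. Thus it suffices to understand how the poset diagram $J \mapsto \BC(w_J)$ changes when we swap the terminal block $\mathcal{O}^{(i,j)}$ for $\mathcal{O}^{(j,i)}$, and to produce a zig-zag of elementary quasi-equivalences compatible with the filtrations.

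The key structural observation is that $\BC_T(w_J)$ is a smash product over $T$ indexed by the entries of $J$, and the defining homotopy colimit for $\sBC$ decomposes (as in the proof of Theorem~\ref{main1a}) as a smash product of ``pushout gadgets'' $P(H_\bullet)$ along the entries of $I$. So the whole filtered $T$-spectrum $\sBC_T(w_I)$ factors as $\sBC_T(w_{I_0})$ relative to the prefix, smashed over $T$ with a gadget $B(\mathcal{O}^{(i,j)})$ built only from the terminal block together with its two boundary $T$-actions. The plan is therefore: (1) isolate this ``block spectrum'' $B(\mathcal{O}^{(i,j)})$ as a $T\times T$-spectrum (left action from the prefix, right action the conjugation tail), filtered by subsets of $\mathcal{O}^{(i,j)}$; (2) prove directly that $B(\mathcal{O}^{(i,j)})$ and $B(\mathcal{O}^{(j,i)})$ are $T\times T$-equivariantly quasi-equivalent as filtered spectra; (3) smash this quasi-equivalence over $T$ with the common prefix data $G_+\wedge_T \sBC_T(w_{I_0})\wedge_T(-)$ and observe that smashing with a fixed filtered spectrum preserves acyclicity (the associated graded and its null-homotopies smash through), hence preserves elementary quasi-equivalences; then the filtration indices match because $|\mathcal{O}^{(i,j)}| = |\mathcal{O}^{(j,i)}| = m_{ij}$.

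For step (2), the block spectrum is assembled from the gadgets $P(H_i), P(H_j)$ in the terminal $m_{ij}$ slots. Working in the Levi subgroup $G_{\{i,j\}}$ generated by $G_i$ and $G_j$, one uses the classical fact underlying the braid relation: the two ``reduced words'' $\mathcal{O}^{(i,j)}$ and $\mathcal{O}^{(j,i)}$ represent the same element of $\Br(G_{\{i,j\}})$, and the corresponding iterated fiber products $G_{i}\times_T G_j \times_T \cdots$ are $T\times T$-equivariantly diffeomorphic via the geometry of the rank-two group $G_{\{i,j\}}$ (this is where $m_{ij}\in\{2,3,4,6\}$ enters and one invokes Bott--Samelson / BSDH resolutions of the Schubert variety, which are independent of the reduced word). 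The filtered structure — the subposets $\I^t$ — then matches up under this diffeomorphism up to the homotopies recorded in the acyclicity condition of Definition~\ref{quasi-equiv}; the discrepancies between the two reduced-word filtrations are precisely acyclic pieces, so one gets a zig-zag of elementary quasi-equivalences rather than an honest equivalence.

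\medskip

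\textbf{Expected main obstacle.} The hard part is step (2): matching the \emph{filtrations} (not merely the top spectra) across the braid relation. The top filtration $\sBCU$ is word-independent by Theorem~\ref{main1a} on the nose, but the intermediate $F_t$ depend on counting ``distance from $I^+$'' in the poset, and a subset of $\mathcal{O}^{(i,j)}$ has no canonical counterpart among subsets of $\mathcal{O}^{(j,i)}$. Organizing the two simplicial/poset diagrams so that their difference is visibly built from the universal acyclic model $s\colon X\to X[1]$ (cf.\ the discussion after Claim~\ref{q-isom} and the mapping-cylinder construction of Claim~\ref{q-isom2}) — and checking the coherence of all the resulting homotopies, which is the delicate bookkeeping acknowledged in the introduction — is where the real work lies. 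The rest (cyclic reduction, the smash-product factorization, stability of acyclicity under smashing) is formal.
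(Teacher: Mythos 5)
Your structural observations are sound — the homotopy colimit does factor as a smash product of one-step pushout gadgets (this is precisely what is used in the proof of Theorem~\ref{main1a}), smashing with a fixed $T$-spectrum does preserve acyclicity of the associated graded, and the entire comparison does reduce to the rank-two block. But the heart of your step (2) rests on a claim that is false: the iterated fiber products $G_i\times_T G_j\times_T\cdots$ and $G_j\times_T G_i\times_T\cdots$ for the two reduced words of the longest element in the rank-two parabolic are \emph{not} $T\times T$-equivariantly diffeomorphic. Bott--Samelson (BSDH) varieties genuinely depend on the choice of reduced word; what is word-independent is only their common birational image, the Schubert variety. (Already in type $A_2$ the two towers $(G_1\times_T G_2\times_T G_1)/T$ and $(G_2\times_T G_1\times_T G_2)/T$ are $\CP^1$-towers over the distinct curves $G_1/T$ and $G_2/T$ inside $G/T$; any abstract identification uses the diagram automorphism and is incompatible with the given $T\times T$-structure, the poset maps, and the boundary $T$-actions you need to smash with the prefix.) So there is no ``direct'' block-level equivalence to propagate, and the ``discrepancies are acyclic'' assertion is exactly what needs a construction, not an observation.

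The idea you are missing — and the actual content of the paper's proof — is to interpolate between the two block diagrams through the Schubert varieties themselves. One introduces a chain of intermediate filtered $G$-spectra $\sBS^{(i,j,m)}(w_I)$, $1\le m\le m_{ij}$, built from functors $\BS^{(i,j,m)}(w_{(J,K)})$ that insert a Schubert lift $\Sh_K$ in place of the last $m$ factors of the block (Definitions~\ref{Schubert}--\ref{PORS}). At $m=1$ this recovers $\sBC(w_{I^{(i,j)}})$, and at $m=m_{ij}$ the two chains (for $(i,j)$ and $(j,i)$) literally coincide because $\Sh_K$ depends only on the reduced sequence $K$, which is symmetric in $i,j$. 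The consecutive comparisons $\sBS^{(i,j,m)}\leftrightarrow\sBS^{(i,j,m+1)}$ are a zig-zag through the functors $\pi_m,\iota_m$ of Definition~\ref{PORS}, and the null-homotopies witnessing acyclicity of the fibers/cofibers come from Lemma~\ref{elem q-equiv}: a Bruhat-decomposition argument showing that the relevant squares involving $G_{k_1}\times_T\Sh_K$ and $\Sh_{K''}$ are honest $T\times T$-equivariant pushouts. Your proposal has no substitute for this lemma, nor for the poset $\mathcal{O}^{(i,j)}_{red}$ of reduced sequences that indexes the interpolation; as written, the zig-zag it promises cannot be produced. (Minor point: your initial appeal to Markov 1 to move the block to the end is harmless but unnecessary — the $\Jj^{(i,j)}_m$ construction allows the block to sit anywhere in $I$.)
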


\medskip
\noindent
We will only consider the nontrivial case where $m_{i,j} > 2$. The case when $m_{i,j} = 2$ is straightforward since the (left/right) $T \times T$-spaces $G_i \times_T G_j$ and $G_j \times_T G_i$ can both be identified with the same space, namely the subgroup of $G$ generated by $G_i$ and $G_j$. The proof of theorem \ref{braid rel} for $m_{i,j} > 2$ is fairly technical, and packed with several constructions and corresponding definitions. The reader interested in the bigger picture may safely ignore the rest of the section. For those who choose the path of most resistance, it would be helpful to review the Appendix (section \ref{appendix}) briefly before proceeding. The general argument of the proof can be outlined as follows. 

\medskip
\noindent
We will introduce two sequences of filtered $G$-spectra $\sBS^{(i,j,m)}(w_I)$ and $\sBS^{(j,i,m)}(w_I)$ resp. for $1 \leq m \leq m_{ij}$ called {\em strict broken Schubert} spectra. The spectra in either sequence will be shown to belong to the same quasi-equivalence class by a sequence of zig-zags of elementary quasi-equivalences. Moreover, by construction, the sequences will begin with $\sBC(w_{I^{(i,j)}})$ and $\sBC(w_{I^{(j,i)}})$ respectively, and end with the exact same spectrum, allowing us to deduce the quasi-equilance between $\sBC(w_{I^{(i,j)}})$ and $\sBC(w_{I^{(j,i)}})$. In other words, the beginning and ending terms of the sequences are

\medskip
\[ \sBS^{(i,j,1)}(w_I) = \sBC(w_{I^{(i,j)}}), \quad \sBS^{(j,i,1)}(w_I) = \sBC(w_{I^{(j,i)}}), \]
\[ \sBS^{(i,j,m_{ij})}(w_I) = \sBS^{(j,i,m_{ij})}(w_I). \]

\medskip
\noindent
As with strict broken symmetries, the filtered spectra $\sBS^{(i,j,m)}(w_I)$ and $\sBS^{(j,i,m)}(w_I)$ will be defined via a homotopy colimit of two functors $\BS^{(i,j,m)}(w_J)$ and $\BS^{(j,i,m)}(w_J)$ (resp.) indexed over a poset category. Before we do that, we require

\medskip
\begin{defn} (Schubert varieties and their lifts) \label{Schubert}

\noindent
Let $\mathcal{O}^{(i,j)}$ denote the indexing sequence $\{ i, j, i, \ldots \}$ ($m_{ij}$-terms). Let $K \subseteq \mathcal{O}^{(i,j)}$ be any subset $K = \{ k_1, k_2, \ldots, k_q \}$. The Schubert variety $\mathcal{X}_K$ is defined as the image under group multiplication
\[ \mathcal{X}_K = \mbox{Image of} \, \, \, \,  G_{k_1} \times_T \cdots \times_T (G_{k_s}/T) \longrightarrow G/T. \]
Define $\Sh_K \subset G$ to be the $T \times T$-invariant subspace to be the pullback (where $T \times T$ acts on $G$ via left/right multiplication)
\[
\xymatrix{
\Sh_K   \ar[d] \ar[r] &  G \ar[d] \\
\mathcal{X}_K     \ar[r] & G/T.
}
\]
\end{defn}

\begin{remark} \label{reduced}
Notice that $\Sh_K$ depends only on the {\bf reduced sequence for $K$}, namely the sequence obtained by contracting all sequentially repeated elements. For instance $\Sh_K$ for the subsequence $K = \{ i,j,j,i \} \subset \mathcal{O}^{(i,j)} := \{i,j,i,j,i\}$ agrees with $\Sh_{K_{red}}$, where $K_{red} = \{i,j,i\}$.
\end{remark}

\medskip
\begin{defn} \label{PORS-2} (Poset of reduced sequences)

\noindent
Let $\mathcal{O}^{(i,j)}$ denote the indexing sequence $\{ i, j, i, \ldots \}$ ($m_{ij}$-terms). Let $\mathcal{O}_{red}^{(i,j)}$ denote the quotient of the poset of all subsets of ${\mathcal{O}^{(i,j)}}$ under the equivalence relation that identifies two indexing sequences if they have the same reduced sequence (see remark \ref{reduced} above).  

\medskip
\noindent
For any $0 < p < m_{ij}$, we have exactly two elements of $\mathcal{O}_{red}^{(i,j)}$ given by reduced sequences of length $p$, namely $\{ ijij\ldots \}$ or $\{ jiji\ldots \}$. There are two more additional sequences given by the empty sequence and the sequence $\{ ijij \ldots \}$ with $m_{ij}$-terms. There is a unique nontrivial morphism from one sequence into a strictly longer sequence. 

\medskip
\noindent
For $1 \leq m \leq m_{ij}$, let $\mathcal{O}^{(i,j,m)} \subseteq \mathcal{O}^{(i,j)}$ denote the indexing sequence containing the last $m$-terms, and let $\mathcal{O}_{red}^{(i,j,m)} \subseteq \mathcal{O}_{red}^{(i,j)}$ denote the sub-poset of sequences in the equivalence class of those subsets in $\mathcal{O}^{(i,j,m)}$. In particular, $\mathcal{O}_{red}^{(i,j,m)}$ has $2m$ elements. Recall the poset $\mathcal{P}_{(m-1)}$ that captures the standard regular CW decomposition of the $(m-1)$-disc (see example \ref{subex}). It is easy to see that $\mathcal{O}_{red}^{(i,j,m)}$ is equivalent to the poset $\mathcal{P}_{(m-1)} \cup \infty $ obtained by adding a terminal object to $\mathcal{P}_{(m-1)}$. 
\end{defn}

\noindent
Let us briefly explore sequences of categories that contain the above posets. 

\medskip
\begin{defn} \label{PORS} (Indexing sequences containing posets of reduces sequences)

\noindent
Consider an indexing sequence $I^{(i,j)}$ that contains the subsequence $\mathcal{O}^{(i,j)}$, so that we have
\[ I^{(i,j)} = \{ \epsilon_1 i_1, \ldots, \epsilon_l i_l, \mathcal{O}^{(i,j)}, \epsilon_{l+m_{ij}+1} i_{l+m_{ij}+1}, \ldots, \epsilon_k i_k \} = I^{(i,j,m_{ij})} \coprod \mathcal{O}^{(i,j)}, \]
where we define $I^{(i,j,m_{ij})} = \{ \epsilon_1 i_1, \ldots, \epsilon_l i_l,  \epsilon_{l+m_{ij}+1} i_{l+m_{ij}+1}, \ldots, \epsilon_k i_k \}$. Similarly, we define $I^{(i,j,m)}$ by the presentation $I^{(i,j,m)} = \{ \epsilon_1 i_1, \ldots, \epsilon_{l+m_{ij}-m} i_{l+m_{ij}-m}, \epsilon_{l+m_{ij}+1} i_{l+m_{ij}+1}, \ldots, \epsilon_k i_k \}$, so that $ I^{(i,j)} = I^{(i,j,m)} \coprod \mathcal{O}^{(i,j,m)}$. 

\smallskip
\noindent
Define the poset categories $\Jj^{(i,j)}_m = 2^{I^{(i,j,m)}} \times \mathcal{O}^{(i,j,m)}_{red}$, and $\I_m = \Jj^{(i,j)}_m/J^{(m,+)}$, where $J^{(m,+)}$ is the terminal object. It follows from defintion \ref{PORS-2} and \ref{join} that the poset $\I_m$ is an iterated join of $\mathcal{P}_{(m-1)}$ with several factors of the one element poset $\bullet$. Notice that for $m < m_{ij}$ there is a canonical projection map of posets $\pi_m : \I_m \longrightarrow \I_{m+1}$. Moreover, it satisfies the property that the preimage of any element is unique unless it is of the form $(J,K) \in 2^{I^{(i,j,m+1)}} \times \mathcal{O}^{(i,j,m+1)}_{red}$ for which the sequence $K$ begins with $i_{l+m_{ij}-m}$, and is non maximal (so that it can be augmented on the left). It this case, the preimage of $(J,K)$ consists of three elements: $(J,K)$, $(\tilde{J}, \tilde{K})$, and $(\tilde{J}, K)$ where $\tilde{J}$ augments $J$ by the element $i_{l+m_{ij}-m}$, and $\tilde{K}$ is obtained from $K$ by dropping the initial term. One checks that the third element is the unique element that lies above other two by virtue of an indecomposable inclusion. An easy exercise now shows that $\pi_m$ is a subdivision of posets as defined in \ref{refine} (also see example \ref{subex}).  
\end{defn}

\noindent
We finally get to the definition of the family of filtered $G$-spectra $\sBS^{(i,j,m)}(w_I)$. 

\medskip
\begin{defn} (Strict broken Schubert spectra)

\noindent
Define functors of broken Schubert spectra $\BS^{(i,j,m)}(w_{(J,K)})$ on the category $\Jj^{(i,j)}_m$ as follows: 
\[ \BS^{(i,j,m)}(w_{(J,K)}) = G_+ \wedge_T (H_{i_{j_1}} \wedge_T \cdots \wedge_T H_{i_{j_q}} \wedge_T {\Sh_{K}}_+ \wedge_T H_{i_{j_{q+1}}} \wedge_T \cdots \wedge_T H_{i_{j_s}}), \]
where $J \in 2^{I^{(i,j,m)}}$ and $K \in \mathcal{O}_{red}^{(i,j,m)}$. We also recall our convention that $H_i = S^{-\zeta_i} \wedge \, {G_i}_+$ if $\epsilon_i = -1$, and $H_i = {G_i}_+$ if $\epsilon_i = 1$. 
As in the case of strict broken symmetries, we define the strict broken Schubert spectra as the filtered equivariant $G$-spectra $F_t \, \sBS^{(i,j,m)}(w_{I})$ by the cofiber sequence
\[ \hocolim_{(J,K) \in \I_m^t} \BS^{(i,j,m)}(w_{(J,K)}) \longrightarrow \BS^{(i,j,m)}(w_{J^{(m,+)}}) \longrightarrow F_t \, \sBS^{(i,j,m)}(w_{I}), \]
where $\I_m^t$ we recall is the sub poset of elements in $\I_m$ that are no more than $t$ non-trivial decomposable morphisms away from the terminal object $J^{(m,+)}$. Notice that the filtered spectrum $\sBS^{(i,j,1)}(w_I)$  agrees with $\sBC(w_{I^{(i,j)}})$, and that we have $\sBS^{(i,j,m_{ij})}(w_{I}) = \sBS^{(j,i,m_{ij})}(w_{I})$ since the functor $\BS^{(i,j,m_{ij})}(w_{(J,K)})$ is the same in both cases. 
\end{defn}

\medskip
\noindent
As mentioned previously, we will presently show that all the filtered spectra of the type $ \sBS^{(i,j,m)}(w_{I})$ are in the same quasi-equivalence class. Of course, the same will be true for $(i,j)$ replaced by $(j,i)$. That would constitute the proof of theorem \ref{braid rel} as indicated above. However, we need a preliminary lemma that will help us compare pointwise fibers along a map of homotopy colimits. 

\medskip
\begin{lemma} \label{elem q-equiv}
Assume $K \subseteq \mathcal{O}^{(i,j)}$ is a subsequence so that $K = \{ k_1, k_2, \ldots, k_q \}$, with $k_{m+1} = \overline{k}_m$, where $\overline{k}_m$ is the counterpart of $k_m$. So for instance if $k_m = i$, then $\overline{k}_m = j$ and vice versa. Assume $X$ and $Y$ are $T \times T$-spaces so that $X$ is free as a right $T$-space. Then the following diagram is an honest pushout of equivariant $T \times T$-spaces
\[
\xymatrix{
X \times_T G_{k_1} \times_T \Sh_K \times_T Y  \ar[d] \ar[r] &  X \times_T G_{k_1} \times_T \Sh_{K'} \times_T Y \ar[d] \\
X \times_T \Sh_K \times_T Y    \ar[r] & X \times_T \Sh_{K''} \times_T Y, 
}
\]
where $K'$ is defined as the set $\{ \overline{k}_1, k_1, k_2, \ldots, k_q \}$, and $K'' = \{ k_1, \overline{k}_1, k_1, k_2, \ldots, k_q \}$. All maps in the above diagram are given by the canonical maps. The vertical maps being induced by multiplication in $G$, and the horizontal ones being the standard inclusion induced by $K \subset K' \subset K''$. 
\end{lemma}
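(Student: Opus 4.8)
\noindent
The plan is to strip off the outer factors, reduce to a pushout of Schubert subspaces of $G$, and verify that pushout by a point-set argument whose only real content is an injectivity statement for a multiplication map.

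First, the functors $X\times_T(-)$ and $(-)\times_T Y$ on $T$-spaces preserve colimits (each is a product followed by an orbit space), and, $T$ being compact (and $X$, $Y$ being $T$-free), they send closed $T$-equivariant embeddings to closed embeddings; moreover each of the four maps in the square of the statement is $X\times_T(-)\times_T Y$ applied to the corresponding map of $T\times T$-spaces. Hence it suffices to prove that
\[
\xymatrix{
G_{k_1}\times_T\Sh_K \ar[d]^{\mu_K} \ar[r] & G_{k_1}\times_T\Sh_{K'} \ar[d]^{\mu_{K'}} \\
\Sh_K \ar[r] & \Sh_{K''}
}
\]
is an honest pushout of $T\times T$-spaces, where $\mu_K[g,h]=gh=\mu_{K'}[g,h]$ and the horizontal maps are the closed inclusions induced by $\mathcal{X}_K\subseteq\mathcal{X}_{K'}$ (drop the leading $\overline{k}_1$ of $K'$) and $\mathcal{X}_K\subseteq\mathcal{X}_{K''}$. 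The map $\mu_K$ is well defined because $K$ begins with $k_1$, so $\mathcal{X}_K$, hence $\Sh_K$, is stable under left multiplication by $G_{k_1}$; in fact $[g,h]\mapsto(gh,gT)$ identifies $\mu_K$ with the projection $\Sh_K\times(G_{k_1}/T)\to\Sh_K$, so $\mu_K$ is surjective. And $\mu_{K'}$ is well defined and surjective since $K''=(k_1)\cdot K'$ gives $G_{k_1}\mathcal{X}_{K'}=\mathcal{X}_{K''}$. The spaces $\Sh_K$, $\Sh_{K'}$, $\Sh_{K''}$ are closed in $G$ (preimages of images of compact Bott--Samelson spaces in $G/T$), hence compact, so all four corners are compact Hausdorff, the horizontals are closed embeddings, and $\mu_{K'}$ is a closed quotient map. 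Consequently the pushout of the small square (a pushout of compact Hausdorff spaces along a closed embedding) is again compact Hausdorff, and it is enough to check that the canonical continuous map from that pushout to $\Sh_{K''}$ is a bijection: it is then automatically a $T\times T$-equivariant homeomorphism, identifying $\Sh_{K''}$ with the pushout.

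Two of the three parts of this bijectivity are formal. Because $\Sh_K$ is left $G_{k_1}$-stable, $gh\in\Sh_K$ forces $h\in\Sh_K$, whence $\mu_{K'}^{-1}(\Sh_K)=G_{k_1}\times_T\Sh_K$, so the preimage in the pushout of each point of $\Sh_K$ is the single glued point; and $\mu_K$ surjects onto $\Sh_K$. The remaining point — injectivity of $\mu_{K'}$ on the complement of $G_{k_1}\times_T\Sh_K$ — is exactly the claim
\[
(\dagger)\qquad h,\ gh\in\Sh_{K'},\quad h\notin\Sh_K,\quad g\in G_{k_1}\ \Longrightarrow\ g\in T,
\]
equivalently $\Sh_{K'}\cap g\Sh_{K'}\subseteq\Sh_K$ for every $g\in G_{k_1}\setminus T$. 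Granting $(\dagger)$, the canonical map from the pushout to $\Sh_{K''}$ is a bijection and the lemma follows.

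The substantive part is $(\dagger)$, which I would prove by induction on the length $q$ of $K$. Writing $\Sh_{K'}=G_{\overline{k}_1}\Sh_K$ and $\Sh_K=G_{k_1}\Sh_{\widetilde{K}}$ (with $\widetilde{K}$ obtained from $K$ by deleting the leading $k_1$), one expands the hypotheses as $h=au$, $gh=a'u'$ with $a,a'\in G_{\overline{k}_1}$, $u,u'\in\Sh_K$, peels off the outermost $G_{k_1}$-factor of $u$ and $u'$, and is reduced to an identity that lives in a product of at most $m_{ij}$ of the rank-one groups $G_i$, $G_j$. The combinatorial heart is then the dihedral Bruhat-order fact that, in $G/T$, the two Schubert varieties attached to the two reduced words of a common length $p<m_{ij}$ in $\langle\sigma_i,\sigma_j\rangle$ intersect exactly in the union of the two Schubert varieties of length $p-1$ — so in particular their intersection lies in $\mathcal{X}_K$ — together with $G_i\cap G_j=T$; the base case $q=1$ is the direct verification that $G_{\overline{k}_1}G_{k_1}\cap gG_{\overline{k}_1}G_{k_1}\subseteq G_{k_1}$ for $g\notin T$, which drops out of $G_{\overline{k}_1}G_{k_1}\cap G_{k_1}G_{\overline{k}_1}=G_{k_1}\cup G_{\overline{k}_1}$. (Alternatively $(\dagger)$ can be read off after complexifying, where it is the standard description of the fibres of the last stage of a Bott--Samelson resolution.) The main obstacle is precisely this step: organizing the bookkeeping of the iterated products so that the dihedral combinatorics applies cleanly, together with the mildly degenerate case $q+2=m_{ij}$, where $\mathcal{X}_{K''}=G/T$ but the same argument goes through unchanged.
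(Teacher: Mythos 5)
Your proposal is correct in outline and correctly identifies all the reduction steps, but it organizes the substantive step differently from the paper, so let me compare.

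The paper performs the same stripping‐off of $X$ and $Y$ (using that $\Sh_K \to \mathcal{X}_K$ is a right $T$-bundle with fiber $Y$ after $\times_T Y$, and that $X$ is right-$T$-free), reducing to the inner square with corners $G_{k_1}\times_T\mathcal{X}_K$, $G_{k_1}\times_T\mathcal{X}_{K'}$, $\mathcal{X}_K$, $\mathcal{X}_{K''}$. It then invokes the Bruhat $T$-CW decomposition of the Schubert varieties and argues by \emph{cell counting}: the cofiber of the top horizontal map is built from exactly four Bruhat cells (the cells $\C_{\overline{k}_1}\cdots$ of lengths $n$ and $n{+}1$, together with $\C_{k_1}$ smashed against those, of lengths $n{+}1$ and $n{+}2$), and these are precisely the four cells one attaches to $\mathcal{X}_K$ to produce $\mathcal{X}_{K''}$. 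The vertical (multiplication) map carries these cells homeomorphically onto their targets, identifying the two horizontal cofibers and hence exhibiting the square as a pushout. Your proof instead reduces, via compactness plus the closed-embedding hypothesis, to a set-theoretic bijectivity statement, and isolates the content as $(\dagger)$: $\Sh_{K'}\cap g\Sh_{K'}\subseteq\Sh_K$ for $g\in G_{k_1}\setminus T$. This is a clean and correct reformulation; the ``two formal parts'' of your bijectivity check (surjectivity via $G_{k_1}\mathcal{X}_{K'}=\mathcal{X}_{K''}$, and $\mu_{K'}^{-1}(\Sh_K)=G_{k_1}\times_T\Sh_K$ via left $G_{k_1}$-stability of $\Sh_K$) are sound, and the reduction of injectivity on the complement to $(\dagger)$ is exactly right. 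Your base case $G_{\overline{k}_1}G_{k_1}\cap G_{k_1}G_{\overline{k}_1}=G_{k_1}\cup G_{\overline{k}_1}$ does check out.

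In the end both arguments rest on the same dihedral Bruhat-order combinatorics (equivalently, the fiber description for the last stage of a Bott--Samelson resolution); the paper uses it implicitly through the cell structure, while you make it explicit in $(\dagger)$. Your route has the advantage of pinpointing a single sharp statement that can be verified by induction or quoted from the Bott--Samelson literature, whereas the paper's cell-counting argument is closer to a self-contained CW verification. The one genuine loose end in your write-up, which you flag honestly, is that you state $(\dagger)$ but do not fully prove it beyond the base case and a sketch of the inductive bookkeeping; since this is precisely the nontrivial content, the proof is not complete as written, though the strategy is viable. One small correction: in the degenerate case $q+2\ge m_{ij}$, you write ``$\mathcal{X}_{K''}=G/T$''; in fact $\mathcal{X}_{K''}$ is the Schubert variety of the longest element of the dihedral parabolic $\langle s_i,s_j\rangle$ (which equals $G/T$ only when $G$ has semisimple rank $2$). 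When $q+2>m_{ij}$ the word $K''$ is no longer reduced and $\mathcal{X}_{K''}$ stabilizes at that parabolic Schubert variety; the horizontal maps then become isomorphisms and the pushout statement degenerates to something trivial, so it is worth handling that case by inspection rather than forcing the induction through it.
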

\begin{proof}
Using the left freeness of $X$ as a right $T$-space, we see that the above diagram fibers over the following diagram, with fiber $Y$:
\[
\xymatrix{
X \times_T G_{k_1} \times_T \mathcal{X}_K \ar[d] \ar[r] &  X \times_T G_{k_1} \times_T \mathcal{X}_{K'}  \ar[d] \\
X \times_T  \mathcal{X}_K     \ar[r] & X \times_T \mathcal{X}_{K''}. 
}
\]
Again, using the freeness of $X$ as a right $T$-space, we see that the above diagram itself fibers over $X/T$, with fiber being the diagram:
\[
\xymatrix{
G_{k_1} \times_T \mathcal{X}_K \ar[d] \ar[r] &  G_{k_1} \times_T \mathcal{X}_{K'}  \ar[d] \\
\mathcal{X}_K     \ar[r] & \mathcal{X}_{K''}. 
}
\]
It is therefore enough to prove that the diagram shown above is an honest pushout of left $T$-spaces. This is essentially an application of the Bruhat decomposition theorem \cite{Ku}. The Bruhat decomposition theorem says that there is a canonical (left) $T$-equivariant CW decomposition of the Schubert varieties of the form $\mathcal{X}_K$. Furthermore, the (open) cells are a product (under group multiplication) of the 2-cells of the form $\C_k$, where $\C_k$ denotes any lift of the space $(G_k/T)-(T/T)$ to $G_k$. 

\medskip
\noindent
Let us make the Bruhat decomposition precise in the case of interest to us. Let $K_{red}$ denote the reduced set corresponding to $K$. Recall from remark \ref{reduced}, that $K_{red}$ is obtained from $K$ by contracting all repeated indices. Let $n$ be the cardinality of $K_{red}$. Bruhat decomposition then gives us a cellular decomposition of $\mathcal{X}_K$ with the top cell given by the alternating product $\C_{k_1} \times \C_{\overline{k}_1} \times \C_{k_1} \times \cdots$ $n$-terms. Lower dimensional open cells are given by alternating products of the form $\C_i \times \C_j \times \C_i \times \cdots$ or $\C_j \times \C_i \times \C_j \times \cdots $ with $p$ terms for $0 \leq p < n$. In particular, $K'$ is obtained from $K$ by adding two more cells given by $\C_{\overline{k}_1} \times \C_{k_1} \times \C_{\overline{k}_1} \times \cdots$ $n$-terms, and the cell  $\C_{\overline{k}_1} \times \C_{k_1} \times \C_{\overline{k}_1} \times \cdots$ $(n+1)$-terms. The space $G_{k_1} \times_T \mathcal{X}_{K'}$ is therefore obtained from $G_{k_1} \times_T \mathcal{X}_K$ by adding yet another two cells $\C_{k_1} \times \C_{\overline{k}_1} \times \C_{k_1} \times \C_{\overline{k}_1} \times \cdots$ $(n+1)$-terms, and the cell  $\C_{k_1} \times \C_{\overline{k}_1} \times \C_{k_1} \times \C_{\overline{k}_1} \times \cdots$ $(n+2)$-terms. It follows that the cofiber of the inclusion of $G_{k_1} \times_T \mathcal{X}_K \subset G_{k_1} \times_T \mathcal{X}_{K'}$ has a $T$-invariant CW decomposition with four cells given by $\C_{\overline{k}_1} \times \C_{k_1} \times \C_{\overline{k}_1} \times \cdots$ $n$-terms, the cell  $\C_{\overline{k}_1} \times \C_{k_1} \times \C_{\overline{k}_1} \times \cdots$ $(n+1)$-terms, the cell $\C_{k_1} \times \C_{\overline{k}_1} \times \C_{k_1} \times \C_{\overline{k}_1} \times \cdots$ $(n+1)$-terms, and the cell  $\C_{k_1} \times \C_{\overline{k}_1} \times \C_{k_1} \times \C_{\overline{k}_1} \times \cdots$ $(n+2)$-terms. These are precisely the four cells that build $\mathcal{X}_{K''}$ from $\mathcal{X}_K$. In particular, the cofibers of the horizontal maps in the above diagram are mapped homeomorphically under the vertical maps. This is equivalent to saying that the diagram is a pushout. 
\end{proof}

\medskip
\noindent
\begin{remark} \label{pushout}
Notice that the left vertical map in the diagram for lemma \ref{elem q-equiv} splits at $T \times T$-spaces, hence we have the equality in the category of $G$-spectra
\[ (X \times_T G_{k_1} \times_T \Sh_K \times_T Y)_+ = F \vee (X \times_T \Sh_K \times_T Y)_+ , \]
where $F$ is the equivariant $G$-spectrum given by the fiber of (either) vertical map. It is easy to verify that the statement of lemma \ref{elem q-equiv}, and the above splitting also holds if we replace each corner of the commutative diagram of \ref{elem q-equiv} by the Thom spectrum of a bundle $\zeta$ pulled back from the pushout $X \times_T \Sh_{K''} \times_T Y$. 
\end{remark}

\medskip
\noindent
Our next step is to show that $\sBS^{(i,j,m)}(w_I)$ and $\sBS^{(i,j,m+1)}(w_I)$ are quasi-equivalent. We do that by means of a zig-zag of elementary quasi-equivalences induced by the map of posets $\pi_m$ of definition \ref{PORS}. We abuse notation by overusing the notation $\pi_m$ for induced functors, hoping that the context avoids any confusion. For $m < m_{i,j}$, consider maps 
\[ \pi_m : \sBS^{(i,j,m)}(w_I) \longrightarrow \pi_m^* \sBS^{(i,j,m+1)}(w_I) \longleftarrow \sBS^{(i,j,m+1)}(w_I) : \iota_m, \]
where $\pi_m : \sBS^{(i,j,m)}(w_I) \longrightarrow \pi_m^* \sBS^{(i,j,m+1)}(w_I)$ is induced by the natural transformation between the functor $\BS^{(i,j,m)}(w_{(J,K)})$ and the functor $\pi_m^*\BS^{(i,j,m+1)}(w_{(J,K)}) = \BS^{(i,j,m+1)}(w_{\pi_m(J,K)})$. The map $\iota_m : \sBS^{(i,j,m+1)}(w_I)  \longrightarrow \pi_m^* \sBS^{(i,j,m+1)}(w_I)$ is constructed using the fact that $\pi_m$ is a subdivision as a map of posets (see definition \ref{PORS}), and then applying theorem \ref{subdiv} from the Appendix. 

\medskip
\noindent
We begin by analyzing the map $\pi_m$. Let $Z_m$ denote the filtered $G$-spectrum representing the fiber of $\pi_m$. Consider the fibration induced by $\pi_m$ on the level of associated graded
\[ \Gr_t (Z_m) \longrightarrow \bigvee_{(J,K) \in \I_m^t/\I_m^{t-1}} \BS^{(i,j,m)}(w_{(J,K)}) \llra{\Gr(\pi_m)} \bigvee_{(J,K) \in \I_m^t/\I_m^{t-1}} \BS^{(i,j,m+1)}(w_{\pi_m(J,K)}). \]
Our goal is to show that $Z_m$ is acyclic. Notice that for any object $(J, K)$, for which $i_{l+m_{ij}-m} \notin J$, the map 
\[\Gr(\pi_m)) : \BS^{(i,j,m)}(w_{(J,K)}) \longrightarrow \BS^{(i,j,m+1)}(w_{\pi_m(J,K)}) \]
is an equivalence. Hence, the fiber of $\Gr(\pi_m)$ is detected on objects $(J, K)$ for which $i_{l+m_{ij}-m} \in J$. We decompose such objects into two types. The first type of objects are those for which the first term of $K$ is $i_{l+m_{ij}-m}$, and the second type for which the first term is not $i_{l+m_{ij}-m}$. Consider the boundary map $\partial :  \Gr_t (Z_m)  \longrightarrow  \Gr_{t-1} (Z_m)$ on objects of the first type. We see that precisely one component of this boundary maps to an object of the second type, and on that component, it is equivalent to the map on the homotopy fibers of vertical maps in a diagram of the form described in lemma \ref{elem q-equiv} and remark \ref{pushout}. Since these maps are cellular, lemma \ref{elem q-equiv} implies that the map $\partial$ on this component gives rise to an equivalence. One therefore has a retraction to $\partial$ on objects of the second type, giving rise to a stable chain homotopy. 

\medskip
\noindent
It remains to show that $\iota_m : \sBS^{(i,j,m+1)}(w_I)  \longrightarrow \pi_m^* \sBS^{(i,j,m+1)}(w_I)$ is also a quasi-equivalence. For this, let $F_t W_m$ denote the filtered $G$-spectrum representing the cofiber of $\iota_m$. On the level of associated graded, we have a cofibration induced by $\iota_m$
\[ \bigvee_{(\tilde{J},\tilde{K}) \in \I_{m+1}^t/\I_{m+1}^{t-1}} \BS^{(i,j,m+1)}(w_{(\tilde{J},\tilde{K})}) \longrightarrow \bigvee_{(J,K) \in \I_m^t/\I_m^{t-1}} \pi_m^* \BS^{(i,j,m+1)}(w_{(J,K)}) \longrightarrow \Gr_t (W_m). \]
Invoking theorem \ref{subdiv} we see that the first map admits a retraction, and we are left with the identification of $\Gr_t(W_m)$ with the complementary summand in the middle term. We express this summand as
\[   \bigvee_{\{ (J, K) \in A \}} \BS^{(i,j,m+1)}(w_{(J, K)},  \]
where the set $A \subseteq \I_m^t/\I_m^{t-1}$ denotes the collection of pairs $(J,K)$ for which $i_{l+m_{ij}-m} \in J$ and $K$ can be augmented in $\mathcal{O}_{red}^{(i,j,m)}$ by adding terms on the left. 
Notice that the collection of objects in $A$ come in two types determined by the sequence $K$. The first type are the ones where the first term of $K$ begins with $i_{l+m_{ij}-m+1}$ (or $K$ is the empty sequence), and the second type being the ones where the first term is $i_{l+m_{ij}-m}$. The boundary $\partial$ pairs up the terms of the first type with those of the second, and is an equivalence between these terms. In particular, we obtain the chain homotopy as before given by the inverse of $\partial$ on these terms. It follows that the cofiber of $\iota_m$ is acyclic. This completes the proof of theorem \ref{braid rel}.

\section{Properties of strict broken symmetries: Inverse relation and reflexivity} \label{Inverse}

\medskip
\noindent
In this section, we address the inverse relation and the property of reflection. In the former case, one contracts successive terms in an indexing sequence $I$ of the form $\{ \ldots, -i, i, \ldots \}$ or $\{ \ldots, i, -i, \ldots \}$ without changing the equivariant homotopy type up to possible quasi-equivalence, suspension and shift. In the latter case, one reflects the indexing sequence $I = \{\epsilon_1 i_1, \ldots, \epsilon_k i_k \}$ to the form $\{ \epsilon_k i_k, \epsilon_{k-1} i_{k-1}, \ldots, \epsilon_1 i_1 \}$ without changing the equivariant homotopy type. 

\medskip
\noindent
We begin with the following theorem.

\medskip
\begin{thm} \label{inv rel}
Let $I_{\pm}$ and $I_{\mp}$ denote indexing sequences of the form 
\[ I_{\pm} = \{ \epsilon_1 i_1, \ldots, \epsilon_l i_l, i, -i, \epsilon_{l+3} i_{l+3}, \ldots \epsilon_k i_k \}, \quad I_{\mp} = \{ \epsilon_1 i_1, \ldots, \epsilon_l i_l, -i, i, \epsilon_{l+3} i_{l+3}, \ldots \epsilon_k i_k \}, \]
then there exists an elementary quasi-equivalence between the filtered $G$-spectra $\sBC(w_{I_{\pm}})$ or $\sBC(w_{I_{\mp}})$ and the shifted spectrum $\Sigma \sBC(w_{I_{red}})[-1]$ (see \ref{shift} for the definition of shift), where 
\[ \quad I_{red} = \{  \epsilon_1 i_1, \ldots, \epsilon_l i_l, \epsilon_{l+3} i_{l+3}, \ldots \epsilon_k i_k \}.   \]
\end{thm}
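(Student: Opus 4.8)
The plan is to realize $\sBC(w_{I_\pm})$ (and symmetrically $\sBC(w_{I_\mp})$) as built from a homotopy colimit over the poset $2^{I_\pm}$, and to exhibit a subfunctor on which the colimit diagram is ``trivial'' in the appropriate sense, so that quotienting it out produces the diagram computing $\Sigma\,\sBC(w_{I_{red}})[-1]$. Concretely, the two consecutive entries $i, -i$ contribute, at the $T$-level, the smash factor $H_i \wedge_T H_{-i} = {G_i}_+ \wedge_T (S^{-\zeta_i} \wedge {G_i}_+)$, and the morphisms of the poset $2^{I_\pm}$ restricted to these two coordinates are governed by the two-object pushout/pullback shape attached to $H_i$ and $H_{-i}$ as in Definition \ref{poset}: dropping the $i$-entry is the inclusion $T_+ \to {G_i}_+$, while dropping the $-i$-entry is the Pontrjagin--Thom map $\pi_i : S^{-\zeta_i}\wedge {G_i}_+ \to T_+$ of Claim \ref{PT1}. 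The key local computation is that the composite $\pi_i$ combined with the inclusion identifies the relevant ``corner'' of the diagram: just as in the proof of Theorem \ref{main1a}, the factor built from $H_i \wedge_T H_{-i}$ together with these two elementary morphisms has the homotopy type of $S^{-\zeta_i}\wedge S^{\zeta_i} \wedge \sigma_i\sigma_i^{-1} T_+ \wedge (\text{suspension}) \simeq \Sigma\, T_+$, because $\sigma_i\sigma_i^{-1} = \mathrm{id}$ in the Weyl group and $S^{-\zeta_i}\wedge S^{\zeta_i} \simeq S^0$ $T\times T$-equivariantly (again by Claim \ref{PT1}). The suspension is exactly the source of the $\Sigma(-)[-1]$ appearing in the statement: collapsing the two coordinates replaces a length-$k$ poset by a length-$(k-2)$ poset but introduces a one-step shift in the filtration and a single suspension coordinate.

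The steps, in order, are: (1) Use the smash-product decomposition $\sBC_T(w_{I_\pm}) = H_{i_1}\wedge_T\cdots\wedge_T H_i \wedge_T H_{-i}\wedge_T\cdots\wedge_T H_{i_k}$ and the corresponding decomposition of the indexing poset $2^{I_\pm}$ as a product, exactly as in the proof of Theorem \ref{main1a}, so that the homotopy colimit defining $\sBCU(w_{I_\pm})$ (and more refined versions, each filtration stage $F_t$) factors as $G_+\wedge_T (\cdots) \wedge_T P(H_i, H_{-i}) \wedge_T (\cdots)$, where $P(H_i,H_{-i})$ is the homotopy colimit over the four-element sub-poset attached to the pair of coordinates $\{i,-i\}$. (2) Compute $P(H_i, H_{-i})$: the four-object diagram has vertices $\{{G_i}_+ \wedge_T H_{-i},\ T_+ \wedge_T H_{-i},\ {G_i}_+\wedge_T T_+,\ T_+\wedge_T T_+\}$ with the two PT maps and two inclusions; using Claim \ref{PT1} to trivialize $S^{-\zeta_i}\wedge S^{\zeta_i}$ and the fact that ${G_i}_+\wedge_T T_+ \simeq {G_i}_+$ is pinched out along $\sigma_i T$, one identifies this homotopy colimit (as a filtered object, keeping track of which morphisms are ``drop a positive entry'' versus ``drop a negative entry'') with $\Sigma\, T_+$ placed one filtration degree up — i.e. with $\Sigma\, T_+[-1]$ as a filtered $T\times T$-spectrum. (3) Observe that $T_+$ sitting in the $\{i,-i\}$ slot is precisely what the $H$-factor-free coordinate looks like in $\sBC_T(w_{I_{red}})$, i.e. reassembling the smash product gives an honest equivalence $P(\cdots)\wedge_T P(H_i,H_{-i})\wedge_T P(\cdots) \simeq \Sigma\,\sBC_T(w_{I_{red}})[-1]$ compatible with the $G$-induction and the filtration by the subposets $\I^t$. (4) Check the bookkeeping: the cardinality $|I_\pm| = |I_{red}| + 2$, the word-length count $l(w_{I_\pm})$ differs from $l(w_{I_{red}})$ by $l_- - 2l_+ = 1 - 2 = -1$, and the filtration shift by $-1$ together with the single suspension is exactly the reindexing recorded in Remark \ref{regrading}; conclude that the honest equivalence at the level of unnormalized spectra is in fact the elementary quasi-equivalence claimed (the map has acyclic fiber because the discarded subfunctor — the ``$\sigma_i T$'' part — contributes a contractible filtered piece on associated graded, by the same splitting as in Remark \ref{pushout}). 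The case $I_\mp$ with entries $-i, i$ is identical after swapping the roles of the inclusion and the PT map, which just reverses the direction of the single suspension coordinate and does not affect the underlying homotopy type.

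The main obstacle I expect is Step (2): carefully identifying the \emph{filtered} homotopy type of the four-object homotopy colimit $P(H_i,H_{-i})$, rather than just its underlying $G$-spectrum. The subtlety is that this small colimit is not simply a double suspension of $T_+$ — one of the morphisms is a PT map which, on associated graded, is not surjective or injective in a naive sense (cf. the discussion after Claim \ref{q-isom} of the shift map $s: X\to X[1]$, which is an elementary quasi-equivalence but zero on associated graded). So one must be careful that the comparison map is the \emph{right} kind of zig-zag: I anticipate needing to factor it through a mapping-cylinder-type object as in Claim \ref{q-isom2} to see that the fiber is acyclic, since a direct map may well be trivial on $\Gr_t$. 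A secondary technical point is ensuring all of this is natural enough that the smash factors \emph{outside} the $\{i,-i\}$ coordinates — which can themselves involve PT maps and Thom spectra $S^{-\zeta}$ — are carried along functorially; this is handled, as in Remark \ref{pushout}, by noting that smashing a fixed Thom spectrum into every corner of the pushout square preserves pushouts, so the local computation globalizes without change.
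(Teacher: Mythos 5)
Your plan — decompose the poset at the $\{i,-i\}$ slot, identify the local contribution with $\Sigma T_+[-1]$, and reassemble — is the same shape as the paper's argument: the paper's explicit decomposition of $\I^t$ into the six-node grid indexed by which elements of $\{i,-i\}$ are present is precisely your product decomposition written out. The suspension $\Sigma$ and shift $[-1]$ in the statement arise exactly as you predict. However, there are two genuine gaps you should flag as more than ``technical points.''

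First, your Steps (2)--(3) assert an honest equivalence of filtered spectra between $P(H_i, H_{-i})$ and $\Sigma T_+[-1]$; this cannot hold, since $F_0 P(H_i,H_{-i}) = {G_i}_+$ while $F_0(\Sigma T_+[-1]) = \ast$. You partially anticipate the issue in your ``main obstacle'' paragraph, but the fix is not the zig-zag device of Claim \ref{q-isom2}: the paper constructs a \emph{single} map of filtered $G$-spectra
\[ \Sigma \sBC(w_{I_{red}})[-1] \longrightarrow \sBC(w_{I_\pm}) \]
by exhibiting an explicit inclusion $\Sigma F_{t-1}\sBC(w_{I_{red}}) \hookrightarrow F_t\sBC(w_{I_\pm})$, built from a lift furnished by the split surjection in Claim \ref{inv2} (``add the negative of the lift'' to produce the pushout of two copies of $\ast$ against $F_{t-1}\sBC(w_{I_{red}})$). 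Without writing down that map, the proof is incomplete.

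Second, your Step (4) cites Remark \ref{pushout} (a Bruhat-cell splitting used for braid relations) as the source of the acyclic fiber. That is a misattribution: the splittings that actually drive acyclicity of the cokernel $Z$ are the $T\times T$-equivariant retractions $\mu$ and $r_i$ of Claims \ref{inv1}, \ref{inv2}, and the observation of Remark \ref{retract} that $r_i \circ \iota_i$ is an equivalence. These are introduced in the paper specifically for this theorem. On $\Gr_t(Z)$ the boundary pairs up the summands indexed by $J\cup\{i\}$, $J\cup\{i,-i\}$, $J\cup\{-i\}$, and the null chain homotopy comes from inverting $\partial$ on the split pairs that Claims \ref{inv1}--\ref{inv2} identify. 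Your proposal has the right strategy and correctly locates the main difficulty, but the two specific pieces above — the explicit comparison map and the correct retractions — are what you would need to supply to turn it into a proof.
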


\medskip
\noindent
The proof of the above theorem will rest on the following two claims

\medskip
\begin{claim} \label{inv1}
The inclusion map $\iota_i : T_+ \longrightarrow {G_i}_+$ induces a $T \times T$ equivariantly split injection
\[ \iota_i : S^{-\zeta_i} \wedge {G_i}_+ = T_+ \wedge_T  (S^{-\zeta_i} \wedge {G_i}_+) \longrightarrow {G_i}_+ \wedge_T (S^{-\zeta_i} \wedge {G_i}_+). \]
\end{claim}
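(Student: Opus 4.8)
The plan is to exhibit an explicit $T \times T$-equivariant retraction for the map $\iota_i$. The source $S^{-\zeta_i} \wedge {G_i}_+$ carries the $T \times T$-action in which $(t_1, t_2)$ acts by $Ad(t_1)_\ast$ on $S^{-\zeta_i}$ and by left/right multiplication on ${G_i}_+$; the target $ {G_i}_+ \wedge_T (S^{-\zeta_i} \wedge {G_i}_+)$ carries the $T\times T$-action coming from left multiplication on the first ${G_i}_+$ factor and right multiplication on the last ${G_i}_+$ factor, with the two interior $T$-actions being ``glued'' by $\wedge_T$. First I would record that $\iota_i$ is, up to the identification $T_+ \wedge_T (S^{-\zeta_i}\wedge {G_i}_+) = S^{-\zeta_i}\wedge {G_i}_+$, simply smashing the inclusion $\iota_i : T_+ \hookrightarrow {G_i}_+$ over $T$ with the identity of $S^{-\zeta_i}\wedge {G_i}_+$. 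So it suffices to split the map $T_+ \to {G_i}_+$ as a map of (left) $T$-spaces in a way compatible with the remaining right $T$-structure that the $\wedge_T$ sees, i.e. to produce a $T\times T$-equivariant retraction of pointed spaces $r_i : {G_i}_+ \to T_+$ and then smash it over $T$ with $\mathrm{id}$ on $S^{-\zeta_i}\wedge {G_i}_+$.

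Next I would construct $r_i$ using the Pontrjagin--Thom map $\pi_i$ of Claim~\ref{PT1}. That claim already furnishes a canonical $T\times T$-equivariant map $\pi_i : S^{-\zeta_i}\wedge {G_i}_+ \to T_+$, built by collapsing the complement of an $\epsilon$-tubular neighbourhood of $T\subset G_i$ and using the trivialization of the normal bundle by $\zeta_i$. Composing $\iota_i$ (smashed with $\mathrm{id}$) with the appropriate instance of $\pi_i$ on the outer factor — more precisely, forming $(\pi_i \wedge \mathrm{id}) \circ (\mathrm{smash\ of\ }\iota_i)$, or dually using $\pi_i$ to detect the bottom cell $T\subset G_i$ — I claim the composite $T_+ \to T_+$ is, up to an equivariant homotopy supported near the zero section, the identity: the Pontrjagin--Thom collapse is set up precisely so that the bottom stratum $T$ is sent identically to $T_+$. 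One then has to check this composite is an \emph{equivalence} of $T\times T$-spectra (not merely an equivalence after forgetting equivariance), which follows because the normal-bundle trivialization is $T\times T$-equivariant for the root-space action, exactly as in the proof of Claim~\ref{PT1}. Inverting that self-equivalence yields the desired retraction and hence the split injection.

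The main obstacle I expect is bookkeeping the $T\times T$-equivariance through the $\wedge_T$ and through the Pontrjagin--Thom collapse simultaneously: one must be careful that the interior $T$ over which the smash is taken is the \emph{right} $T$-action on the source ${G_i}_+$ and the \emph{left} $T$-action on the outer ${G_i}_+$, and that $\pi_i$'s equivariance (which by Claim~\ref{PT1} intertwines $Ad(t_1)_\ast$ on $S^{-\zeta_i}$ with honest left/right multiplication on $T_+$) matches up correctly after passing to $\wedge_T$. The identity $t_1 g_i t_2 = (Ad(t_1)g_i)\, t_1 t_2$ used in Claim~\ref{PT1} is the crucial algebraic fact that makes the $S^{-\zeta_i}\wedge S^{\zeta_i}$ cancellation $T\times T$-trivial, and the same identity is what guarantees the retraction is $T\times T$-equivariant; I would lean on that computation rather than redo it. Once the retraction is in hand, the splitting $S^{-\zeta_i}\wedge {G_i}_+ \simeq (\text{target})$ as a wedge, with complementary summand the cofiber of $\iota_i$, is formal from the existence of a retraction in a stable category, and this is what Remark~\ref{pushout} anticipates being used in the sequel.
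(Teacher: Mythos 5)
Your plan hinges on producing a $T\times T$-equivariant retraction $r_i\colon {G_i}_+\to T_+$ of the inclusion $\iota_i\colon T_+\hookrightarrow {G_i}_+$, and then smashing $r_i$ over $T$ with the identity of $S^{-\zeta_i}\wedge{G_i}_+$. No such $r_i$ exists, even nonequivariantly at the level of suspension spectra. For example, with $G=SU(2)$, $T=U(1)\cong S^1\subset S^3\cong G_1$, the inclusion $T\hookrightarrow G_1$ is nullhomotopic, so $\Sigma^\infty T_+\to\Sigma^\infty {G_1}_+$ is the wedge of the identity $S^0\to S^0$ with a \emph{null} map $\Sigma^\infty S^1\to \Sigma^\infty S^3$; any retraction would have to be the identity on the $\Sigma^\infty S^1$ wedge summand, which is impossible. (This is also implicit in the paper: if ${G_i}_+$ retracted $T\times T$-equivariantly onto $T_+$, then Claim~\ref{inv2} would follow trivially, whereas its proof constructs a much more involved retraction.) The Pontrjagin--Thom map does not rescue the plan: $\pi_i$ of Claim~\ref{PT1} has source $S^{-\zeta_i}\wedge{G_i}_+$, not ${G_i}_+$, and the corresponding unshifted collapse ${G_i}_+\to S^{\zeta_i}\wedge T_+$ sends $T$ in along the zero section, i.e.\ the composite $T_+\to{G_i}_+\to S^{\zeta_i}\wedge T_+$ is $(S^0\to S^{\zeta_i})\wedge\mathrm{id}_{T_+}$, not an equivalence. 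Your assertion that ``the bottom stratum $T$ is sent identically to $T_+$'' misreads the Pontrjagin--Thom construction, which preserves the $T$-coordinate but shifts by the $S^{\zeta_i}$ normal direction.

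The paper's proof takes an entirely different, and much shorter, route. It observes that the left $T$-action $Ad(t)_\ast$ on $S^{-\zeta_i}$ from Definition~\ref{desusp} is by construction the restriction of a $G_i$-action $Ad(g)_\ast$, so the left $T$-action on the whole inner factor $S^{-\zeta_i}\wedge{G_i}_+$ extends to a $G_i$-action. The retraction is then simply the action map
\[
\mu\colon {G_i}_+\wedge_T(S^{-\zeta_i}\wedge{G_i}_+)\longrightarrow S^{-\zeta_i}\wedge{G_i}_+,\qquad [(g,\lambda,h)]\longmapsto \bigl(Ad(g)_\ast\lambda,\,gh\bigr),
\]
which is well-defined over $\wedge_T$ and $T\times T$-equivariant by the group-action axioms, and satisfies $\mu\circ\iota_i=\mathrm{id}$ by setting $g=e$. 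The decisive observation you were missing is that $S^{-\zeta_i}$ carries a genuine $G_i$-action rather than merely a $T$-action, so the outer ${G_i}_+$ can act on the inner smash; one never needs to retract ${G_i}_+$ onto $T_+$ at all.
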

\begin{proof}
We simply need to furnish an equivariant retraction. Recall that $S^{-\zeta_i}$ was the restriction to $T$ of a $G_i$ representation. In particular, the $T$-action on $S^{-\zeta_i}$ extends to a $G_i$-action. The retraction we seek is given by the left $G_i$-action on $S^{-\zeta_i} \wedge {G_i}_+$
\[ \mu : {G_i}_+ \wedge_T (S^{-\zeta_i} \wedge {G_i}_+) \longrightarrow S^{-\zeta_i} \wedge {G_i}_+. \]
\end{proof}

\begin{claim} \label{inv2}
The pinch map $\pi_i : S^{-\zeta_i} \wedge {G_i}_+ \longrightarrow T_+$ of claim \ref{PT1} induces a $T \times T$ equivariantly split surjection
\[ \pi_i : {G_i}_+ \wedge_T (S^{-\zeta_i} \wedge {G_i}_+) \longrightarrow {G_i}_+ \wedge_T T_+ = {G_i}_+ .\]
\end{claim}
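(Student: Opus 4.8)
The plan is to exhibit the map $\mathrm{id}_{{G_i}_+}\wedge_T\pi_i$ as the cofibre map of a cofibre sequence of $T\times T$-spectra that splits, the splitting being supplied by the very retraction $\mu$ constructed in Claim~\ref{inv1}; the substantive work is then the identification of the fibre. First I would recall from the construction in Claim~\ref{PT1} that $\pi_i$ is the $\zeta_i$-desuspension of the collapse $c\colon{G_i}_+\to S^{\zeta_i}\wedge T_+$ onto an $\epsilon$-neighbourhood of $T\subset G_i$. The homotopy cofibre sequence of $c$ has the form
\[ T_+^{(\sigma_i)} \xrightarrow{\ j\ } {G_i}_+ \xrightarrow{\ c\ } S^{\zeta_i}\wedge T_+, \]
where $\dot\sigma_i\in N(T)$ is a lift of the simple reflection $\sigma_i$, the map $j$ is the inclusion of the opposite coset $\dot\sigma_i T$ (the complement of the $\epsilon$-neighbourhood deformation retracts onto it, cf.\ the proof of Claim~\ref{PT1} and Theorem~\ref{main1a}), and $T_+^{(\sigma_i)}$ denotes $T_+$ with the \emph{left} $T$-action pre-composed with $\sigma_i$, which is the $T\times T$-structure that coset carries inside $G_i$. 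Desuspending by $\zeta_i$ and smashing over $T$ with ${G_i}_+$ yields a cofibre sequence of $T\times T$-spectra
\[ {G_i}_+\wedge_T\bigl(S^{-\zeta_i}\wedge T_+^{(\sigma_i)}\bigr) \xrightarrow{\ \iota'\ } {G_i}_+\wedge_T\bigl(S^{-\zeta_i}\wedge {G_i}_+\bigr) \xrightarrow{\ \mathrm{id}\wedge_T\pi_i\ } {G_i}_+, \]
so it is enough to prove that the fibre inclusion $\iota'$ is a $T\times T$-equivariant split monomorphism.

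Next I would identify the source of $\iota'$. Using that $S^{-\zeta_i}$ carries a $G_i$-action restricting to the given $T$-action (Definition~\ref{desusp}, exactly the extra structure exploited in Claim~\ref{inv1}), the assignment $[g,(\varphi,t)]\mapsto(Ad(g)_\ast\varphi,\ g\,\dot\sigma_i\,t)$ should define a $T\times T$-equivariant isomorphism ${G_i}_+\wedge_T(S^{-\zeta_i}\wedge T_+^{(\sigma_i)})\xrightarrow{\ \cong\ }S^{-\zeta_i}\wedge{G_i}_+$ — the Weyl twist on $T_+^{(\sigma_i)}$ is precisely what makes it well-defined and equivariant. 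Transporting $\iota'$ across this isomorphism, the fibre inclusion becomes the $T\times T$-map $S^{-\zeta_i}\wedge{G_i}_+\to{G_i}_+\wedge_T(S^{-\zeta_i}\wedge{G_i}_+)$ given by $(\varphi,h)\mapsto[\,h\dot\sigma_i^{-1},\,(Ad(\dot\sigma_i h^{-1})_\ast\varphi,\ \dot\sigma_i)\,]$.

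I would then split it. Post-composing with the retraction $\mu$ of Claim~\ref{inv1}, $\mu[g,(\varphi,h)]=(Ad(g)_\ast\varphi,\ gh)$, a direct computation gives $\mu\circ\iota'=\mathrm{id}$: the two copies of $\dot\sigma_i$ cancel and $Ad(h\dot\sigma_i^{-1})_\ast Ad(\dot\sigma_i h^{-1})_\ast=Ad(e)_\ast$. Hence $\iota'$ is split, the displayed cofibre sequence splits as ${G_i}_+\wedge_T(S^{-\zeta_i}\wedge{G_i}_+)\simeq{G_i}_+\vee(S^{-\zeta_i}\wedge{G_i}_+)$, and $\mathrm{id}\wedge_T\pi_i$ is the projection onto the ${G_i}_+$-summand, hence a $T\times T$-equivariant split epimorphism, as claimed. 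Note that $\pi_i$ itself is \emph{not} a split surjection (it misses a wedge summand on $\pi_0$), so the extra factor ${G_i}_+$ — which is $T$-free on the side over which one smashes — is genuinely doing the work.

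The step I expect to be the main obstacle is the bookkeeping in the first two paragraphs: pinning down the fibre of $\pi_i$ as a $T\times T$-spectrum, i.e.\ getting the Weyl-twisted $T_+^{(\sigma_i)}$ and the map $j$ exactly right, since the relevant coset is $\dot\sigma_i T$ and not $T$. Once that twist is correctly recorded the splitting is formal and reuses Claim~\ref{inv1} verbatim. (An alternative route is to deduce Claim~\ref{inv2} from Claim~\ref{inv1} by $T\times T$-equivariant Atiyah–Spanier–Whitehead duality, since $\pi_i$ is, up to a suspension by the trivial representation $\mathfrak t$, the dual of $\iota_i$, and equivariant duality interchanges split monomorphisms with split epimorphisms; but carrying the equivariant Thom twists through the duality is comparably technical.)
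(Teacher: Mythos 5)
Your proof is correct and follows the same route as the paper: both identify the fibre of $\mathrm{id}\wedge_T\pi_i$ as ${G_i}_+\wedge_T(S^{-\zeta_i}\wedge(\sigma_iT)_+)$ and split the resulting cofibre sequence by an explicit $T\times T$-equivariant retraction. The paper simply writes down $r_i(g,\lambda,h)=(gh\sigma_i^{-1},\,Ad(\sigma_ih^{-1})_\ast\lambda,\,\sigma_i)$ directly, which is precisely your $\Phi^{-1}\circ\mu$; your derivation makes the reuse of Claim~\ref{inv1}'s retraction $\mu$ explicit, but the content is the same.
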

\begin{proof}
It is easy to see that the fiber of $\pi_i$ is given by the spectrum ${G_i}_+ \wedge (S^{-\zeta_i} \wedge_T {T\sigma_i}_+) $, where $\sigma_i \in G_i$ is any lift of the Weyl generator by the same name. It remains to construct  an equivariant retraction from ${G_i}_+ \wedge_T (S^{-\zeta_i} \wedge {G_i}_+)$ to ${G_i}_+ \wedge_T (S^{-\zeta_i} \wedge {T\sigma_i}_+)$. This retraction $r_i$ may be defined as follows:
\[ r_i : {G_i}_+ \wedge_T (S^{-\zeta_i} \wedge {G_i}_+) \longrightarrow {G_i}_+ \wedge_T (S^{-\zeta_i} \wedge {T\sigma_i}_+), \quad \quad (g, \lambda, h) \longmapsto (gh\sigma_i^{-1}, (\sigma_i h^{-1})_\ast \lambda, {\sigma_i}).\]
\end{proof}

\begin{remark} \label{retract}
It is straightforward to check that the composite map given by the inclusion $\iota_i$ followed by the retraction $r_i$ is an equivalence
\[ r_i \circ \iota_i : S^{-\zeta_i} \wedge {G_i}_+ \llra{\cong} {G_i}_+ \wedge_T (S^{-\zeta_i} \wedge {T\sigma_i}_+).\]
\end{remark}

\noindent
In proving theorem \ref{inv rel} we will only address the case of $I := I_{\pm}$, the other case being similar. First, let us consider the homotopy colimit $\hocolim_{J \in \I^t} \BC(w_J)$. We may decompose $\I^t$ into subcategories so that this homotopy colimit may be expressed as the homotopy colimit over the following diagram

\begin{align*}
      \xymatrix{
      &   \hocolim_{J \in \I^t_{red}} \BC(w_{J \cup \{i\}})    &  \\
        \hocolim_{J \in \I^{t-1}_{red}} \BC(w_{J})   \ar[ur]    \ar[d]          & \ar[l] \hocolim_{J \in \I^{t-2}_{red}} \BC(w_{J \cup \{-i\}}) \ar[r] \ar[u] \ar[dl] \ar[d] \ar[dr] &    \hocolim_{J \in \I^{t-1}_{red}} \BC(w_{J \cup \{i, -i\}})  \ar[ul]  \ar[d] \\
     \BC(w_{I_{red}^+}) &  \ar[l] \BC(w_{I_{red}^+ \cup \{-i\}}) \ar[r] &  \BC(w_{I_{red}^+ \cup \{i,-i\}}) 
      }
\end{align*}

\medskip
\noindent
Now the entire diagram fibers over $\BC(w_{I^+}) = \BC(w_{I_{red}^+ \cup \{ i \}})$. In particular, we may express $\BC(w_{I^+}) = \BC(w_{I_{red}^+ \cup \{ i \}})$ as a colimit over a similar diagram 

\begin{align*}
      \xymatrix{
      &   \BC(w_{I_{red}^+ \cup \{i\}})    &  \\
       \quad \quad \quad \BC(w_{I_{red}^+})   \ar[ur]    \ar[d]^{=}          & \ar[l] \BC(w_{I_{red}^+ \cup \{-i\}}) \ar[r] \ar[u] \ar[dl] \ar[d]^{=} \ar[dr] &    \BC(w_{I_{red}^+ \cup \{i, -i\}})  \ar[ul]  \ar[d]^{=} \\
     \BC(w_{I_{red}^+}) &  \ar[l] \BC(w_{I_{red}^+ \cup \{-i\}}) \ar[r] &  \BC(w_{I_{red}^+ \cup \{i,-i\}}) 
      }
\end{align*}

\medskip
\noindent
Now consider the fibration
\[ \hocolim_{J \in \I^t} \BC(w_J) \llra{\pi} \BC(w_{I^+}) \longrightarrow F_t \, \sBC(w_I) . \]
We may express $F_t \, \sBC(w_I)$ as a homotopy colimit of the pointwise cofibers, denoted as $\widetilde{\BC}(w_J)$, of the above two diagrams:

\begin{align*}
      \xymatrix{
       &    \hocolim_{J \in \I^t_{red}} \widetilde{\BC}(w_{J \cup \{i\}})     &   \\
      F_{t-1} \, \sBC(w_{I_{red}})   \ar[ur]    \ar[d]          & \quad \ar[l] \hocolim_{J \in \I^{t-2}_{red}} \widetilde{\BC}(w_{J \cup \{-i\}}) \ar[r] \ar[u] \ar[dl] \ar[d] \ar[dr] &    \hocolim_{J \in \I^{t-1}_{red}} \widetilde{\BC}(w_{J \cup \{i, -i\}})  \ar[ul]  \ar[d] \\
    \ast   & \quad  \ar[l] \ast \ar[r] &  \ast
      }
\end{align*}

\medskip
\noindent
Now, using claim \ref{inv2}, it is easy to see that the following map in the above diagram
\[ F_{t-1} \, \sBC(w_{I_{red}}) \longrightarrow  \hocolim_{J \in \I^t_{red}} \widetilde{\BC}(w_{J \cup \{i\}}) \]
 lifts to $ \hocolim_{J \in \I^{t-1}_{red}} \widetilde{\BC}(w_{J \cup \{i, -i\}})$. Using this lift (namely by adding the negative of the lift), we may construct an inclusion of the following pushout representing $\Sigma F_{t-1} \, \sBC(w_{I_{red}})$
 \[ \ast \longleftarrow F_{t-1} \, \sBC(w_{I_{red}})  \longrightarrow \ast \]
 into the above homotopy colimit diagram representing $F_t \, \sBC(w_I)$. The cokernel of this inclusion is the homotopy colimit given by the $G$-spectrum $Z$, with $F_t Z$ defined as the homotopy colimit of a diagram described below

\medskip
\begin{align*}
      \xymatrix{
  {\quad \quad \quad \quad} &  &    \hocolim_{J \in \I^t_{red}} \widetilde{\BC}(w_{J \cup \{i\}})     &    \\
 \ast \ar[d] \ar[urr]  &     \quad       & \ar[ll] \hocolim_{J \in \I^{t-2}_{red}} \widetilde{\BC}(w_{J \cup \{-i\}}) \ar[r] \ar[u] \ar[dll] \ar[d] \ar[dr] &    \hocolim_{J \in \I^{t-1}_{red}} \widetilde{\BC}(w_{J \cup \{i, -i\}})  \ar[ul]  \ar[d] \\
   \ast  &   \quad &  \ar[ll] \ast \ar[r] &  \ast
      }
\end{align*}

\medskip
\noindent
Consider the associated graded complex of $Z$. All the above maps are trivial on the level of associated graded and so we see that 
\[ \Gr_t (Z) = \bigvee_{J \in \I^t_{red}} \BC(w_{J \cup \{i\}}) \vee \bigvee_{J \in \I^{t-1}_{red}} \BC(w_{J \cup \{i, -i\}}) \vee \bigvee_{J \in \I^{t-2}_{red}} \BC(w_{J \cup \{-i\}}). \]
Using claims \ref{inv1}, \ref{inv2} and remark \ref{retract}, it is easy to see that the nontrivial horizontal map in the above diagram admits an objectwise retraction and the nontrivial slanted map is objectwise split. It follows that the differential on $\Gr_t(Z)$ pairs up these split summands, from which it follows that the filtered spectrum $Z$ is acyclic. We therefore deduce that $\sBC(w_{I_{\pm}})$ is quasi-equivalent to $\Sigma \sBC(w_{I_{red}})[-1]$ as we wanted to show. The above argument completes the proof of theorem \ref{inv rel} and establishes theorem \ref{main1}. 

\medskip
\begin{remark} \label{q-equiv}
Theorem \ref{main1} tells us that $\sBC(w)$ depends only on the braid element $w$ up to quasi-equivalence. However, given a $G$-equivariant cohomology theory $\E_G$, it is not immediate that the cochain complex $\E^*_G(\Gr_t(\sBC(w))$ is well defined up to quasi-isomorphism. By invoking claim \ref{q-isom}, this would indeed be the case if we could check that each map in the zig-zag used to establish the proof of theorem \ref{main1}, is either injective or surjective on the level of $\E^*_G(\Gr)$. Analyzing the proof of theorems \ref{braid rel} and \ref{inv rel} (that feed into the proof of theorem \ref{main1}), one observes that this condition is purely formal for most maps since the associated graded complex for these map splits. The only ones for which this condition needs to be verified in cohomology are the following elementary quasi-equivalences of filtered $G$-spectra in the proof of theorem \ref{braid rel}, for any pair of indices $(i,j)$, and for $1 < m < m_{ij}$
\[ \pi_m : \sBS^{(i,j,m)}(w_I) \longrightarrow \pi_m^\ast \sBS^{(i,j,m+1)}(w_I). \]
\end{remark}

\medskip
\begin{remark} \label{fundamental}
Let us observe that the proofs of invariance under braid and inversion relations given in sections \ref{Braid} and \ref{Inverse} actually hold for the underlying $T \times T$-spectra $\sBC_T(w_I)$ before we induce up to $\SU(r)$. The invariance under these relations consequently also holds for the (strict) ``Bott-Samelson" spectra $\sBC(w_I) \wedge_T S^0$, where we have taken orbits under the right $T$-action. The spectrum $\sBC(w_I) \wedge_T S^0$ is a filtered equivariant spectrum that represents an equivariant filtered homotopy type for the complexes studied by Rouquier in \cite{R,R2}. We revisit the Bott-Samelson spectra in (\cite{Ki2} see in particular theorem 2.7). \end{remark}

\smallskip
\noindent
We end this section with an additional property of the invariant $\sBC(w_I)$ of interest. 

\smallskip
\begin{defn} (The reflected poset $\I^R$)

\noindent
Given a sequence $\, I = \{ \epsilon_1 i_1, \epsilon_2 i_2, \ldots, \epsilon_k i_k \}$, we define its reflection 
\[ I^R = \{ \epsilon_k i_k, \epsilon_{k-1} i_{k-1}, \ldots, \epsilon_1 i_1 \}. \]
This gives rise to an isomorphism of the posets $R : 2^{I} \longrightarrow 2^{I^R}$ which restricts to an isomorphism $R : \I \longrightarrow \I^R$, where $\I^R$ is the poset of all non-terminal objects in $2^{I^R}$. Given a subset $J \in 2^{I}$, its image under $R$ is an element in $2^{I^R}$ denoted by $J^R$. 
\end{defn}

\noindent
With the above definition in place, let us prove the reflexive property: 

\medskip
\begin{thm}\label{Mirror} The functors $\BC(w_{J})$ and $\BC(w_{J^R}) \circ R$ are equivalent. In particular, $R$ induces a levelwise (honest) equivalence of filtered $G$-spectra
\[ R_t : F_t \, \sBC(w_{I}) \llra{\simeq} F_t \, \sBC(w_{I^R}), \quad t \geq 0. \]
\end{thm}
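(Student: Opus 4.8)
The plan is to exhibit an explicit $T\times T$-equivariant homeomorphism (in the positive case) or stable equivalence (in general) realizing the reflection on each broken symmetry space $\BC_T(w_J)$, and to check it intertwines the structure maps of Definition \ref{poset}, so that it assembles to a natural equivalence of the two diagrams indexed by $\I$ and $\I^R$. Concretely, for a positive braid with $J = \{i_{j_1},\ldots,i_{j_s}\}$ one sends
\[ [(g_1,\ldots,g_s)] \in G_{i_{j_1}} \times_T \cdots \times_T G_{i_{j_s}} \longmapsto [(g_s^{-1},\ldots,g_1^{-1})] \in G_{i_{j_s}} \times_T \cdots \times_T G_{i_{j_1}}, \]
using that each $G_i$ is closed under inversion and that inversion $g\mapsto g^{-1}$ swaps the roles of left and right $T$-multiplication. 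One checks this is well-defined on the $\times_T$-quotients (the relation $(g_k t, g_{k+1}) \sim (g_k, t g_{k+1})$ is preserved since $(g_k t)^{-1} = t^{-1} g_k^{-1}$), and that under endpoint conjugation $t\cdot[(g_1,\ldots,g_s)] = [(tg_1,\ldots,g_s t^{-1})]$ the map is $T$-equivariant after we also reverse which endpoint we call "left"; since endpoint conjugation by $T$ is the same whether we feed $t$ into the first factor on the left or the last factor on the right, this is automatic. Finally one induces up along $G\times_T(-)$, using $g\mapsto g$ (no change needed on the $G$-coordinate if we absorb the bookkeeping into the $T$-coordinate), exactly as in the proof of Theorem \ref{M1}.

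Next I would handle the general (signed) case by covering the space-level map with the evident permutation-and-inversion map on the smash factors $H_i = S^{-\zeta_i}\wedge {G_i}_+$: reverse the order of the $H_{i_j}$'s and apply inversion on each ${G_i}_+$, while on the sphere factor $S^{-\zeta_i}$ one uses that $-\zeta_i$ restricts to the root representation $\alpha_i$ which is $T$-equivariantly isomorphic to its conjugate under inversion (indeed $\alpha_i$ and $-\alpha_i$ give homeomorphic one-point compactifications, and the $Ad$-twist in Definition \ref{desusp} is compatible with $g\mapsto g^{-1}$). This produces the natural equivalence of Thom spectra $\BC(w_J) \simeq \BC(w_{J^R})$.

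The one genuinely nontrivial point — and the step I expect to be the main obstacle — is verifying that this family of equivalences is \emph{natural} with respect to the morphisms of the poset, i.e. that it intertwines the two kinds of indecomposable maps $\BC(w_J)\to\BC(w_K)$ of Definition \ref{poset}: the inclusions induced by $T_+\to{G_{i_j}}_+$ (when $K$ is obtained by adjoining a positive entry) and the Pontrjagin–Thom collapses $\pi_{i_j}$ of Claim \ref{PT1} (when $K$ drops a negative entry). Reversing the sequence also reverses the direction in which entries are added or dropped, so one must check that reflection carries an "add a positive entry at position $p$" morphism to an "add a positive entry at the mirror position" morphism, and likewise that it is compatible with $\pi_i$; this is where the precise combinatorics of how $R:\I\to\I^R$ acts on indecomposables (mirroring Definition \ref{poset}'s condition $\epsilon_{i_j}=\pm 1$) must be matched against the maps. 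Since $\pi_i$ is built from a tubular neighborhood of $T\subset G_i$ that is symmetric under $g\mapsto g^{-1}$ (the normal bundle trivialization via right $T$-translation becomes the one via left $T$-translation after inversion, and the two agree up to the $Ad$-action already accounted for in Claim \ref{PT1}), this compatibility should hold on the nose, but it requires care. Once naturality is in hand, $R$ induces an isomorphism of the homotopy colimit diagrams restricted to each $\I^t$, hence the claimed levelwise honest equivalences $R_t: F_t\,\sBC(w_I)\xrightarrow{\simeq} F_t\,\sBC(w_{I^R})$.
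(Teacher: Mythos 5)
The positive-braid formula you propose, $[(g,g_1,\ldots,g_s)]\mapsto[(g,g_s^{-1},\ldots,g_1^{-1})]$, is a genuine and slightly simpler variant of what the paper does: the paper's map is $[(g,g_1,\ldots,g_s)]\mapsto[(g(g_1\cdots g_s),g_s^{-1},\ldots,g_1^{-1})]$, i.e.\ it additionally shifts the $G$-coordinate by the product of the $g_i$'s. Both versions are well-defined (each is $T$-equivariant for the endpoint-conjugation action on $\BC_T(w_J)$, hence induces a $G$-map on the balanced product) and both intertwine the inclusion morphisms of the poset, so in the purely positive case your version does go through. However your parenthetical ``no change needed on the $G$-coordinate\ldots exactly as in the proof of Theorem~\ref{M1}'' is misleading: the proof of Theorem~\ref{M1} \emph{does} modify the $G$-coordinate (it multiplies by $g_{i_1}$ when the first entry wraps around). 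The paper builds its reflection around the self-map $M=\mu\circ\mathrm{D}$ precisely so that the general (signed) construction factors cleanly as a $\wedge_T$-product of single-letter involutions.

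The genuine gap is in your treatment of the signed case. You propose to ``reverse the order of the $H_{i_j}$'s and apply inversion on each ${G_i}_+$, while on the sphere factor $S^{-\zeta_i}$ one uses that $-\zeta_i$\ldots is $T$-equivariantly isomorphic to its conjugate under inversion.'' Reversing the order of factors in $H_{i_1}\wedge_T\cdots\wedge_T H_{i_s}$ requires, on each letter, a map $\rho_i:H_i\to H_i$ intertwining the $T\times T$-action $(t_1,t_2)$ with the swapped-inverted action $(t_2^{-1},t_1^{-1})$. Inversion on ${G_i}_+$ does exactly this. But on $S^{-\zeta_i}$ the $T\times T$-action is $Ad(t_1)_\ast$ with $t_2$ acting trivially, and no \emph{fixed} self-map of $S^{-\zeta_i}$ alone can intertwine $Ad(t_1)_\ast$ with $Ad(t_2^{-1})_\ast$ for all $(t_1,t_2)$. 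So there is no $T$-equivariant isomorphism of the kind your parenthetical suggests. The map on the sphere factor must depend on the ${G_i}_+$-coordinate: the paper takes $R_i(\varphi,g)=\bigl(Ad(g^{-1})_\ast\circ\llbracket-1\rrbracket\,\varphi,\;g^{-1}\bigr)$, where the presence of $Ad(g^{-1})_\ast$ is exactly what makes the computation
\[
Ad\bigl((t_1gt_2)^{-1}\bigr)_\ast\,\llbracket-1\rrbracket\,Ad(t_1)_\ast
= Ad(t_2^{-1})_\ast\,Ad(g^{-1})_\ast\,\llbracket-1\rrbracket
\]
come out right, and the antipode $\llbracket-1\rrbracket$ is needed for compatibility with the Pontrjagin--Thom collapse of Claim~\ref{PT1}. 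Without this $g$-dependent twist your map does not yield a morphism of Thom spectra at all, so the naturality you flag as ``the main obstacle'' cannot even be posed. If you replace your fixed sphere map with $R_i$ as above, your version without the $M$-shift does then assemble into a natural equivalence, giving an alternative route to the theorem.
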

\begin{proof}
We require a natural equivalence between the $G$-spectra $\BC(w_{J})$ and $\BC(w_{J^R})$. Let $J = \{\epsilon_{i_{j_1}} i_{j_1}, \ldots, \epsilon_{i_{j_s}} i_{j_s} \}$ be an element in $2^{I}$. Recall from definition \ref{BC} that 
\[  \BC(w_{J}) : G_+ \wedge_T (H_{i_{j_1}} \wedge_T H_{i_{j_2}} \wedge_T \ldots  \wedge_T H_{i_{j_s}}),  \, \, \mbox{where} \, \, H_i = S^{-\zeta_i} \wedge \, {G_i}_+,  \, \, \mbox{if} \, \, \epsilon_i = -1, \, \,  H_i = {G_i}_+ \, \,  \mbox{else}. \]
Let us first consider the case of a positive braid $w_{I}$, so that $H_i = {G_i}_+$ for all $j$. In that case, we define $R$ on the underlying topological space by
\[ R [(g, g_{i_{j_1}}, \ldots, g_{i_{j_s}})] = [(g (g_{i_{j_1}} \ldots g_{i_{j_s}}), g_{i_{j_s}}^{-1}, \ldots, g_{i_{j_1}}^{-1})], \quad (i_{j_s}, \ldots, i_{j_1}) = J^R. \]
We may express the above map as $R = \mu \wedge_T (R_{i_{j_s}} \wedge_T R_{i_{j_{s-1}}} \ldots \wedge_T R_{i_{j_1}})$, where $\mu$ is induced by the multiplication map: 
\[ \mu : G \times G_{i_{j_1}} \times \cdots \times G_{i_{j_s}} \longrightarrow G, \quad (g, g_{i_{j_1}}, \ldots, g_{i_{j_s}}) \longmapsto g g_{i_{j_1}}, \ldots g_{i_{j_s}},  \]
and $R_i : G_i \rightarrow G_i$ is the inversion map. 

\noindent
We now extend the above description of $R$ to the case of an arbitrary braid $w_{I}$. To begin, let us observe that $S^{-\zeta_i} \wedge {G_i}_+$ admits a map of the form $S^{-\zeta_i} \wedge {G_i}_+ \longrightarrow S^{-\zeta_i} \wedge {G_i}_+ \wedge {G_i}_+$ given by performing the diagonal on the last factor. Therefore, for an arbitrary braid $w_{I}$, we have a map of the form 
\[ \mbox{D} : G_+ \wedge (H_{i_{j_1}} \wedge H_{i_{j_2}} \wedge \ldots  \wedge H_{i_{j_s}}) \longrightarrow G_+ \wedge ((H_{i_{j_1}} \wedge {G_{i_{j_1}}}_+) \wedge (H_{i_{j_2}} \wedge {G_{i_{j_2}}}_+) \wedge \ldots  \wedge (H_{i_{j_s}} \wedge {G_{i_{j_s}}}_+)). \]
The map $\mu$ defined above can now be invoked to obtain. 
\[ \mu : G_+ \wedge ((H_{i_{j_1}} \wedge {G_{i_{j_1}}}_+) \wedge (H_{i_{j_2}} \wedge {G_{i_{j_2}}}_+) \wedge \ldots  \wedge (H_{i_{j_s}} \wedge {G_{i_{j_s}}}_+)) \longrightarrow G_+ \wedge (H_{i_{j_1}} \wedge H_{i_{j_2}}  \wedge \ldots  \wedge H_{i_{j_s}} ). \]
Hence, we have a self-map $M$ of $G_+ \wedge (H_{i_{j_1}} \wedge H_{i_{j_2}} \wedge \ldots  \wedge H_{i_{j_s}})$ given by $M = \mu \circ \mbox{D}$. The map $R_i$ also extends to a spectrum of the form $S^{-\zeta_i} \wedge {G_i}_+$ as follows. Let $\llbracket -1\rrbracket$ denote the involution on $S^{-\zeta_i} = \Map(S^{\mathfrak{g}_i}, S^r)$ induced by conjugation with the antipode maps on $\mathfrak{g}_i$ and $\R^r$. We define $R_i$ on $S^{-\zeta_i} \wedge {G_i}_+$ to be the involution given by smashing $Ad(g_i^{-1})_\ast \circ \llbracket -1\rrbracket$ on $S^{-\zeta_i}$, with the inversion map on ${G_i}_+$. 

\medskip
\noindent
We now define the natural equivalence $R$ from $\BC(w_{J})$ to $\BC(w_{J^R})$ as 
\[ R : \BC(w_{J}) \longrightarrow \BC(w_{J^R}), \quad R := (R_{i_{j_s}} \wedge_T R_{i_{j_{s-1}}} \wedge_T \ldots \wedge_T R_{i_{j_1}}) \circ M. \]
It is straightforward to check from the construction that $R$ is well-defined and indeed a natural equivalence of functors. 
\end{proof}

\medskip
\noindent
\section{Properties of strict broken symmetries: $G = \SU(r)$ and the Markov 2 property} \label{Markov2} 

\medskip
\noindent
In this section, we specialize to the case of the compact Lie group $G=\SU(r)$, whose braid group $\Br(r)$ is the classical braid group on $r$-strands generated by the elementary classical braids $\sigma_1, \ldots, \sigma_{r-1}$.

\medskip
\noindent
The Markov 2 property studies the effect of taking a braid word in $r$-strands, and extending it to a braid word in $(r+1)$-strands by augmenting it by the generator $\sigma_r \in \Br(r+1)$. 
Since we will compare the spectra of broken symmetries for $\SU(r)$ and $\SU(r+1)$, we see that the Markov 2 property introduces a stabilization in the strands. In order to be keep track of the number of strands, let us set some notation. For $i \leq r$, we will use the notation $G_i^r \subseteq \SU(r+1)$ to be the unitary form (of rank $r+1$) in the reductive Levi subgroup with roots $\pm \alpha_i$. We will continue to use the notation $G_i \subseteq \SU(r)$ for the unitary form of rank $r$. Notice that for $i < r$, these subgroups of $\SU(r)$ and $\SU(r+1)$ are related via a block decomposition $G_i^r = G_i \times S^1$, where $S^1$ denotes the last factor of the product decomposition of the standard maximal torus $T^{r+1} \subset \SU(r+1)$. 

\medskip
\noindent
Let $\Delta_r \subset T^{r+1}$ denote the centralizer of the final simple root $\alpha_r$. More precisely, $\Delta_r$ is the subgroup $T^{r-1} \times \Delta$, where $\Delta$ is the diagonal circle in the last two standard factors of $T^{r+1}$. We may re-express $T^{r+1}$ as $\Delta_r \times S^1$, with $S^1$ being identified with the last factor in the standard decomposition of $T^{r+1}$. By construction, $\Delta_r$ centralizes the group $G^r_r$.

\medskip
\noindent
The goal of this section is to establish the following two theorems. 

\medskip
\begin{thm} \label{M2a}
Let $I = \{\epsilon_1 i_1, \cdots, \epsilon_k i_k \}$ denote a sequence that offers a presentation for a braid element $w \in \Br(r)$, and let $I(r)$ denote the sequence obtained by augmenting $I$ by the index $i_{k+1} = r$. In other words, $I(r)$ is a presentation for the braid element $w \sigma_r \in \Br(r+1)$. Then there is an elementary quasi-equivalence of $\SU(r+1)$-spectra
\[ \sBC(w_{I(r)}) \longrightarrow \SU(r+1)_+ \wedge_{\Delta_r} \Sigma^2 \sBC_{T^r}(w_I), \]
where we think of $\Delta_r$ as a map $\Delta_r : T^r \longrightarrow T^{r+1}$, $(t_1, \ldots, t_{r-1}, t_r) \longmapsto (t_1, \ldots, t_{r-1}, t_r, t_r)$.
\end{thm}

\medskip
\begin{thm} \label{M2b}
Let $I = \{\epsilon_1 i_1, \cdots, \epsilon_k i_k \}$ denote a sequence that offers a presentation for a braid element $w \in \Br(r)$, and let $I(-r)$ denote the sequence obtained by augmenting $I$ by the index $i_{k+1} = -r$. In other words, $I(-r)$ is a presentation for the braid element $w \sigma_r^{-1} \in \Br(r+1)$. Then there is an elementary quasi-equivalence of $\SU(r+1)$-spectra
\[  \sBC(w_{I(-r)}) \longrightarrow \SU(r+1)_+ \wedge_{\Delta_r} \Sigma^{-1} \sBC_{T^r}(w_I). \]
\end{thm}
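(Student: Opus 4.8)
The plan is to mimic the strategy one would use for Theorem~\ref{M2a}, but keeping careful track of the fact that the added generator $\sigma_r$ now appears with a negative exponent, so that the last factor in $\BC_T(w_{I(-r)})$ is $H_r = S^{-\zeta_r} \wedge {G_r^r}_+$ rather than ${G_r^r}_+$. First I would unwind the definition of $\sBCU$ and, more generally, of each filtration stage $F_t\,\sBC(w_{I(-r)})$. Because the last index $-r$ carries $\epsilon_{k+1}=-1$, the terminal object $I(-r)^+$ of the poset $2^{I(-r)}$ is obtained by \emph{dropping} this last term, so $I(-r)^+ = I^+$, and the indecomposable morphisms out of objects containing $-r$ are given by applying the Pontrjagin--Thom pinch map $\pi_r$ of Claim~\ref{PT1} in the last factor. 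Thus the homotopy colimit defining $\sBCU(w_{I(-r)})$ decomposes, exactly as in the proof of Theorem~\ref{main1a}, as a smash product of the functor computing $\sBCU(w_I)$ with the pushout diagram $P(H_r)$ for the single negative index, which (by that same proof) is equivalent to $\Sigma S^{-\zeta_r} \wedge \sigma_r T_+$. The key point, already used in Theorem~\ref{main1a}, is that $S^{\zeta_r}\wedge T_+$ is $G^r_r$ pinched along the left coset $\sigma_r T$; combining this with $\zeta_r\cong\alpha_r$ and the desuspension, the last factor contributes a shift by $\Sigma^{-1}$ (one desuspension from the virtual $-\zeta_r$, partially cancelled by the $\Sigma$ from the pushout cone, and the sphere $S^{\zeta_r}$ absorbed by the Thom desuspension $S^{-\zeta_r}$), which is precisely the $\Sigma^{-1}$ appearing in the statement.

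Next I would carry out the change-of-group step, which is where the inclusion $\Delta_r$ enters. After the reduction above, $\sBC(w_{I(-r)})$ is $\SU(r+1)_+ \wedge_{T^{r+1}}$ of $\Sigma^{-1}$ times $\BC_{T^{r+1}}(w_I)$ smashed over $T^{r+1}$ with $\sigma_r T^{r+1}_+$. I would now observe, as in the analysis preceding Theorem~\ref{M2a} in Section~\ref{Markov2}, that the holonomy in the new $r$-th sector being forced into the coset $\sigma_r T^{r+1}$ has the effect of twisting the conjugation action in the last two torus coordinates: concretely, smashing with $\sigma_r T^{r+1}_+$ over $T^{r+1}$ and re-absorbing the $\sigma_r$-translate identifies the resulting $T^{r+1}$-spectrum with one induced along the inclusion $\Delta_r \hookrightarrow T^{r+1}$ (the centralizer of $\alpha_r$) of the spectrum $\sBC_{T^r}(w_I)$, where $\Delta_r\cong T^r$ by dropping the last coordinate. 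The free $S^1$-factor (the complement of $\Delta_r$ in $T^{r+1}$) is what makes the Weyl-reflection bookkeeping go through. Inducing the identification $\SU(r+1)_+\wedge_{T^{r+1}}(\,\cdot\,) \simeq \SU(r+1)_+\wedge_{\Delta_r}(\,\cdot\,)$ along $\Delta_r < T^{r+1} < \SU(r+1)$ then yields the map in the statement; the filtrations match termwise because the only modification to the poset $\I$ is the single extra pushout coordinate, which contributes the shift but no new filtration jumps, so the induced map is a levelwise map of filtered $\SU(r+1)$-spectra.

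Finally I would verify that this map is an \emph{elementary quasi-equivalence}, i.e.\ that its fiber is acyclic in the sense of Definition~\ref{quasi-equiv}. Here I expect the main obstacle: the identification in the previous paragraph is not quite an honest levelwise equivalence, precisely because of the ``small subtlety'' flagged in the introduction—the inclusion $\Delta_r$ differs from the standard $T^r < \SU(r+1)$ in the last entry, and the redundant connections whose holonomy preserves the $T$-structure in the new sector must be factored out. Concretely, the pushout $P(H_r)$ differs from the naive guess $S^{\zeta_r}\wedge\sigma_r T_+$ by a piece built from $\Sh_{\emptyset}$-type contributions, and on the associated graded the discrepancy is controlled by a Pontrjagin--Thom splitting exactly of the kind isolated in Claims~\ref{inv1}, \ref{inv2} and Remark~\ref{retract}. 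I would therefore compute $\Gr_t$ of the fiber, show every structure map on it is objectwise split or admits an objectwise retraction via the left $G^r_r$-action on $S^{-\zeta_r}\wedge{G^r_r}_+$ (Claim~\ref{inv1}) and the retraction $r_r$ onto the $\sigma_r$-coset (Claim~\ref{inv2}), and conclude that the boundary $\partial$ pairs up the resulting split summands, giving the required null chain homotopy $h_t$ with $\partial_t\circ h_t + h_{t-1}\circ\partial_{t-1}$ an equivalence. This establishes acyclicity of the fiber and hence the elementary quasi-equivalence asserted in Theorem~\ref{M2b}. The case of $\sigma_r^{-1}$ being settled, Theorem~\ref{M2a} for $\sigma_r$ is the mirror computation with $H_r={G^r_r}_+$ in place of $S^{-\zeta_r}\wedge{G^r_r}_+$, producing the positive shift $\Sigma^2$ instead.
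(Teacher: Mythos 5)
The overall plan—produce a map of filtered spectra whose fiber is acyclic—is the right shape, but the mechanism you propose for both the construction of the map and the acyclicity check does not match what is actually needed, and there is a genuine gap in the filtered bookkeeping.

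Your construction of the map is built on the smash-product decomposition used in Theorem~\ref{main1a}. That decomposition identifies only the \emph{limiting} spectrum $\sBCU(w_{I(-r)})$, because the pushout diagrams $P(H_i)$ collapse the whole indexing poset at once; it does not identify each filtrate $F_t\,\sBC(w_{I(-r)})$. Your sentence asserting that ``the filtrations match termwise because the only modification to the poset $\I$ is the single extra pushout coordinate'' glosses over the actual interaction: the poset $\I(-r)$ is built from $2^I\times 2^{\{-r\}}$, and the filtration by distance from the terminal object $(I^+,\emptyset)$ shifts the ``$\{-r\}$-slice'' by one, which is exactly the source of the $\vee$-decomposition that controls the proof. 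The correct route (and the one the paper follows, mirroring Claim~\ref{M2cof}) is to prove an \emph{objectwise} cofibration
$\BC(w_{J(-r)}) \to \BC(w_J) \to \SU(r+1)_+ \wedge_{\Delta_r} \Sigma^{-1}\BC_{T^r}(w_J)$
coming from the Pontrjagin--Thom sequence of $T^{r+1}\sigma_r \subset G^r_r$, introduce the auxiliary functor $\BC^{-r}$ on $\I(-r)$ sending $J \mapsto \BC(w_{J\cup\{-i_{k+1}\}})$, and chase the resulting diagram of homotopy colimits to obtain a cofibration of \emph{filtered} $\SU(r+1)$-spectra with the stated target.

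Your acyclicity argument is also off the mark. You invoke Claims~\ref{inv1}, \ref{inv2} and Remark~\ref{retract}, but those are the splittings governing the inversion relation, where two consecutive factors $G_i$ and $S^{-\zeta_i}\wedge G_{i+}$ are compared over $T$; in the Markov~2 setting there is no such double factor of $G^r_r$, and those claims simply do not enter. The actual fiber $s\BC^{-r}(w_{I(-r)})$ has associated graded $\Gr_{t-1}(\sBC(w_I)) \vee \Gr_t(\sBC(w_I))$ with the shift differential identifying the two summands, and the null homotopy is the tautological one. Your mention of ``$\Sh_\emptyset$-type contributions'' is likewise not meaningful here: the Schubert varieties $\Sh_K$ only appear in the braid-relation argument of Section~\ref{Braid}, not in the Markov~2 analysis. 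In short, you have the right target and the right kind of statement to prove, but the tools you reach for (the smash decomposition of the colimit, and the inversion-relation splittings) are from the wrong parts of the paper, and replacing them with the objectwise cofibration plus the $\BC^{-r}$ diagram chase is essential to make the argument close.
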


\medskip
\noindent
The proof of theorem \ref{M2a} rests on the following claim

\medskip
\begin{claim} \label{M2cof}
Let $J \subseteq I$ denote a subsequence $J = \{\epsilon_{j_1} i_{j_1}, \ldots, \epsilon_{j_s} i_{j_s}\}$. Regarding $J$ as a subsequence of $I(r)$, let $J(r) \subseteq I(r)$ denote the sequence $J \cup \{i_{k+1} \}$. Then there is a cofibration of $\SU(r+1)$-equivariant spectra induced by the inclusion $J \subset J(r)$
\[ \BC(w_J) \longrightarrow \BC(w_{J(r)}) \longrightarrow \SU(r+1)_+ \wedge_{\Delta_r} \Sigma^2 \BC_{T^r}(w_J). \]
\end{claim}

\begin{proof}
Let $T^{r+1} \subset G^r_r \subset \SU(r+1)$ denote the inclusion of the standard maximal torus $(S^1)^{\times r+1}$. Notice that we have a cofibration of $T^{r+1} \times T^{r+1}$-equivariant spectra
\begin{equation} \label{eq1} T^{r+1}_+ {\longrightarrow G^r_r}_+ \longrightarrow S^{\zeta_r} \wedge (T^{r+1} \sigma_r)_+, \end{equation}
where $\sigma_r$ is the permutation matrix in $\SU(r+1)$ that permutes the last two standard coordinates of $T^{r+1}$, so that $T^{r+1} \sigma_r$ is a $T^{r+1} \times T^{r+1}$-space abstractly isomorphic to $T^{r+1}$, with the right $T^{r+1}$-action being twisted by $\sigma_r$. As before, $S^{\zeta_r}$ is the compactification of the root space representation of the root $\alpha_r$, and is given a trivial right $T^{r+1}$ action. 

\medskip
\noindent
Smashing equation (\ref{eq1}), $T^{r+1}$-equivariantly, with the spectra $\BC_{T^{r+1}}(w_J)$, we get a cofibration
\begin{equation}\label{eq2} \BC(w_J) \longrightarrow \BC(w_{J(r)}) \longrightarrow \SU(r+1)_+ \wedge_{T^{r+1}} (H^r_{i_{j_1}} \wedge_{T^{r+1}} \cdots \wedge_{T^{r+1}} H^r_{i_{j_s}} \wedge S^{\zeta_r} \wedge {\sigma_r}_+). \end{equation}
Decomposing $T^{r+1}$ as $\Delta_r \times S^1$, and observing that the right action of $\Delta_r$ fixes the spectrum $S^{\zeta_r}$ and commutes with $\sigma_r$, we may express the above cofiber as
\[ (\SU(r+1)_+ \wedge_{\Delta_r}  \wedge (H^r_{i_{j_1}} \wedge_{T^{r+1}} \cdots \wedge_{T^{r+1}} H^r_{i_{j_s}}) \wedge S^{\zeta_r} \wedge {\sigma_r}_+) \wedge_{S^1} S^0. \]
Recall that the $S^1$-action on $(H^r_{i_{j_1}} \wedge_{T^{r+1}} \cdots \wedge_{T^{r+1}} H^r_{i_{j_s}}) \wedge {\sigma_r}_+$ is by endpoint conjugation. Incorporating the twisting by $\sigma_r$ on the right allows us to identify the above $S^1$-spectrum with $(H^r_{i_{j_1}} \wedge_{T^{r+1}} \cdots \wedge_{T^{r+1}} H^r_{i_{j_s}})$, with the $S^1$-action given by twisting the conjugation action by $\sigma_r$ on the right hand side. This twisted conjugation action can be identified with the standard right multiplication action of the conjugate diagonal subgroup $S^1 \cong \overline{\Delta} \subseteq T^{r+1}$ consisting of elements of the form $(x^{-1}, x)$ in the last two factors. Recall that for $i \in I$, we have a block decomposition $G^r_j = G_r \times S^1$. It follows that all the spectra $H^r_i$ that occur above are free $S^1$-spectra of the form $H_i \wedge S^1_+$, where $H_i$ denotes the corresponding spectra when $J$ is seen as a subset of $I$. We may therefore express $(H^r_{i_{j_1}} \wedge_{T^{r+1}} \cdots \wedge_{T^{r+1}} H^r_{i_{j_s}})$ as $(H_{i_{j_1}} \wedge_{T^r} \cdots \wedge_{T^r} H_{i_{j_s}} \wedge S^1_+)$. In particular, the cofiber of equation \ref{eq2} can be identified with
\[ (\SU(r+1)_+ \wedge_{\Delta_r} (H_{i_{j_1}} \wedge_{T^r} \cdots \wedge_{T^r} H_{i_{j_s}}) \wedge S^{\zeta_r} )\wedge_{\overline{\Delta}} S^1_+). \]
Since the $\overline{\Delta}$-action is free on the $S^1$-factor, we may drop the free $S^1$-factor and identify the above spectrum with
\begin{equation} \label{eq3} \SU(r+1)_+ \wedge_{\Delta_r} \Sigma^2 (H_{i_{j_1}} \wedge_{T^r} \cdots \wedge_{T^r} H_{i_{j_s}} ). \end{equation}
Putting equation \ref{eq2} and the identification \ref{eq3} together, gives rise to the cofibations of $\SU(r+1)$-spectra that we seek
\[ \BC(w_J) \longrightarrow \BC(w_{J(r)}) \longrightarrow \SU(r+1)_+ \wedge_{\Delta_r} \Sigma^2 \BC_{T^r}(w_J). \]
\end{proof}

\noindent
Let us use the above claim to prove theorem \ref{M2a}. Let us first recall the categories used in defining the spectra $\sBC(w_{I(r)})$. 

\medskip
\begin{defn} (The poset $\I(r)$ and the functor $\BC^r(w_J)$ ) \label{category L}

\noindent
Let $\I(r) \subset 2^{I(r)}$ denote the poset subcategory of subsets in $I(r)$ that do not contain the terminal object. Consider the functor $\BC^r$ from $\I(r)$ to $\SU(r+1)$-spectra that sends $J \in \I(r)$ to $\BC(w_{J \cap I})$. It is clear that $\BC(w_J) = \BC^r(w_J)$ if $J \subseteq I$. In particular, the above functor is a natural extension of the functor $\BC$ on $2^I$. Let us also observe that one has a canonical natural transformation $\mathcal{T} : \BC^r \longrightarrow \BC$ induced by the inclusions $J\cap I \subset J(r)$. 
\end{defn}

\medskip
\noindent
Now consider the following commutative diagram
\[
\xymatrix{
\hocolim_{J \in \I(r)^t} \BC^r(w_J)  \ar[d]^{\hocolim \mathcal{T}} \ar[r] & \BC^r(w_{I^+}) \ar[d]  \\
\hocolim_{J \in \I(r)^t} \BC(w_J) \ar[d] \ar[r] & \BC(w_{I(r)^+}) \ar[d] \\
\hocolim_{J \in \I^t} \SU(r+1)_+ \wedge_{\Delta_r} \Sigma^2 \BC_{T^r} (w_J)  \ar[r] & \SU(r+1)_+ \wedge_{\Delta_r} \Sigma^2 \BC_{T^r}(w_{I^+}).
}
\]
\noindent
It is clear from claim \ref{M2cof} that the right vertical sequence is a cofibration. Let us notice that the left vertical sequence is also a cofibration. To see this, recall that the functor $\BC(w_J)$ agrees with the functor $\BC^r$ on the full sub-category generated by $J \in \I(r)^t$ that do not contain $i_{r+1}$. This sub-category has a terminal object $I$. In particular, the cofiber of $\hocolim \mathcal{T}$ is detected on the full sub-category of objects $J$ containing $i_{r+1}$. This category is equivalent to $\I^t$, and one may identify the cofiber of the top left vertical map with $\hocolim_{J \in \I^t} \SU(r+1)_+ \wedge_{\Delta_r} \Sigma^2 \BC_{T^r} (w_J)$ using claim \ref{M2cof}. This shows that the left vertical sequence is a cofibration. Taking horizontal cofibers of the above diagram gives rise to a cofibration of filtered $\SU(r+1)$-spectra
\[ s\BC^r(w_{I(r)}) \llra{s\mathcal{T}} s\BC(w_{I(r)}) \longrightarrow \SU(r+1)_+ \wedge_{\Delta_r} \Sigma^2 \sBC_{T^r}(w_I), \]
where the filtered $\SU(r+1)$-spectrum $s\BC^r(w_{I(r)})$ is defined to have filtrates $F_t \, s\BC^r(w_{I(r)})$ given by the cofiber of the top horizontal map. It remains to show that $s\BC^r(w_{I(r)})$ is acyclic. From the definition of $s\BC^r(w_{I(r)})$, the associated graded of the filtered $\SU(r+1)$-spectrum $s\BC^r(w_{I(r)})$ is easily computed to be
\[ \Gr_t(s\BC^r(w_{I(r)})) = \Gr_{t-1}(\sBC(w_I)) \vee \Gr_t(\sBC(w_I)), \]
with the differential identifying the obvious summands. The null homotopy is straightforward to construct, completing the proof of theorem \ref{M2a}. 

\medskip
\noindent
The proof of theorem \ref{M2b} is similar to the above and rests on the following claim similar to claim \ref{M2cof}. We sketch the argument below, leaving the details to the reader

\medskip
\begin{claim}
Let $J \subseteq I$ denote a subsequence $J = \{\epsilon_{j_1} i_{j_1}, \ldots, \epsilon_{j_s} i_{j_s}\}$. Regarding $J$ as a subsequence of $I(-r)$, let $J(-r) \subseteq I(-r)$ denote the sequence $J \cup \{-i_{k+1} \}$. Then there is a cofibration of $\SU(r+1)$-equivariant spectra induced by the map $J(-r) \rightarrow J$
\[ \BC(w_{J(-r)}) \longrightarrow \BC(w_J) \longrightarrow \SU(r+1)_+ \wedge_{\Delta_r} \Sigma^{-1} \BC_{T^r}(w_J). \]
\end{claim}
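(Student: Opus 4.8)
The plan is to run, \emph{mutatis mutandis}, the argument of Claim \ref{M2cof}, replacing the cofiber sequence \ref{eq1} (which played the role of $\sigma_r$) by the Pontrjagin--Thom cofiber sequence attached to the pinch map $\pi_r$ of Claim \ref{PT1} (which now plays the role of $\sigma_r^{-1}$). Concretely, I would first record that, exactly as in the pushout computation in the proof of Theorem \ref{main1a}, the map $\pi_r$ fits into a cofiber sequence of $T^{r+1}\times T^{r+1}$-spectra
\[ S^{-\zeta_r}\wedge {G^r_r}_+ \llra{\pi_r} T^{r+1}_+ \longrightarrow \Sigma\, S^{-\zeta_r}\wedge(\sigma_r T^{r+1})_+, \]
where $S^{-\zeta_r}$ carries the trivial right $T^{r+1}$-action and $(\sigma_r T^{r+1})$ is a copy of $T^{r+1}$ whose right action is twisted by the permutation matrix $\sigma_r$; this follows from the tubular-neighborhood description in the proof of Claim \ref{PT1}, the complement of an $\epsilon$-neighborhood of $T^{r+1}\subseteq G^r_r$ deformation-retracting onto the opposite coset and the collapse thereby producing the factor $\Sigma S^{-\zeta_r}$ after smashing with $S^{-\zeta_r}$.

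Next I would smash this sequence $T^{r+1}$-equivariantly into the final ($H^r_{-r}$) factor of $\BC_{T^{r+1}}(w_{J(-r)})$. By Definition \ref{poset} the first map then becomes the canonical map $\BC(w_{J(-r)})\to\BC(w_J)$, and --- inducing up to $\SU(r+1)$ exactly as in the passage from \ref{eq1} to \ref{eq2} --- the cofiber is identified with
\[ \SU(r+1)_+\wedge_{T^{r+1}}\bigl(H^r_{i_{j_1}}\wedge_{T^{r+1}}\cdots\wedge_{T^{r+1}} H^r_{i_{j_s}}\wedge \Sigma S^{-\zeta_r}\wedge(\sigma_r T^{r+1})_+\bigr). \]
From this point the reduction is verbatim the one in Claim \ref{M2cof}: decompose $T^{r+1}=\Delta_r\times S^1$; use that $\Delta_r$ centralizes $G^r_r$ to pull it out of $S^{-\zeta_r}$ and past $\sigma_r$; identify the residual twisted-conjugation $S^1$-action with right multiplication by the antidiagonal circle $\overline\Delta=\{(x^{-1},x)\}\subseteq T^{r+1}$; invoke the block decompositions $G^r_i=G_i\times S^1$ (for $i<r$) to write each $H^r_i$ as the free $S^1$-spectrum $H_i\wedge S^1_+$, so that $H^r_{i_{j_1}}\wedge_{T^{r+1}}\cdots\wedge_{T^{r+1}} H^r_{i_{j_s}}=\BC_{T^r}(w_J)\wedge S^1_+$ with free $S^1$-action; and cancel the free $\overline\Delta$-action against the $S^1_+$ factor. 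This leaves $\SU(r+1)_+\wedge_{\Delta_r}(\BC_{T^r}(w_J)\wedge\Sigma S^{-\zeta_r})$, and since $\Delta_r$ centralizes $\alpha_r$ the sphere $S^{-\zeta_r}$ is the trivial $S^{-2}$ as a $\Delta_r$-spectrum, whence $\Sigma S^{-\zeta_r}\simeq\Sigma^{-1}$ and one obtains the claimed cofibration $\BC(w_{J(-r)})\to\BC(w_J)\to\SU(r+1)_+\wedge_{\Delta_r}\Sigma^{-1}\BC_{T^r}(w_J)$.

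The only step that genuinely differs from Claim \ref{M2cof}, and hence the one I would write out carefully, is the first: pinning down the cofiber of $\pi_r$ with the correct single suspension and the correct \emph{opposite} coset $\sigma_r T^{r+1}$, together with its full $T^{r+1}\times T^{r+1}$-equivariance --- this equivariance is what makes the subsequent $\Delta_r\times S^1$ splitting legitimate. All of this is already implicit in the proof of Claim \ref{PT1} (the normal bundle of $T^{r+1}$ in $G^r_r$ is trivialized by right translation with conjugation action $\zeta_r$, and the complement of the tube retracts $T^{r+1}\times T^{r+1}$-equivariantly onto $\sigma_r T^{r+1}$); once it is in place, the rest is formally identical to Claim \ref{M2cof} and can be left to the reader as stated, after which Theorem \ref{M2b} follows by the analogue of the diagram-chase used for Theorem \ref{M2a}, with $\Sigma^2$ replaced throughout by $\Sigma^{-1}$.
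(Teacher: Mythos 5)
Your proof is correct and takes essentially the same route as the paper: the paper's own (sketched) argument begins with exactly the cofiber sequence $S^{-\zeta_r}\wedge(T^{r+1}\sigma_r)_+ \to S^{-\zeta_r}\wedge{G^r_r}_+ \xrightarrow{\pi_r} T^{r+1}_+ \to \Sigma S^{-\zeta_r}\wedge(T^{r+1}\sigma_r)_+$ (your version is the same sequence, just written starting from $\pi_r$ rather than from the inclusion of $T^{r+1}\sigma_r$), and then tells the reader to repeat the $\Delta_r\times S^1$ cancellation from Claim \ref{M2cof}, which is precisely what you carry out — including the correct identification $\Sigma S^{-\zeta_r}\simeq\Sigma^{-1}$ as a $\Delta_r$-spectrum.
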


\medskip
\noindent
The proof of this claim is formally the same as that of claim \ref{M2cof} and starts with the cofibration sequence of $T^{r+1} \times T^{r+1}$ spectra induced by the inclusion $T^{r+1} \sigma_r \subseteq G^r_r$
\[ S^{-\zeta_r} \wedge (T^{r+1} \sigma_r)_+ \longrightarrow S^{-\zeta_r} \wedge {G_r^r}_+ \longrightarrow T^{r+1}_+ \longrightarrow \Sigma S^{-\zeta_r} \wedge (T^{r+1} {\sigma_r})_+. \]
We leave it to the reader to complete the argument along the lines of claim \ref{M2cof}. 

\medskip
\noindent
The proof of theorem \ref{M2b} is now very similar to that of theorem $\ref{M2a}$. One begins by defining a functor $\BC^{-r}$ from $\I(-r)$ to $\SU(r+1)$-spectra that sends $J \in \I(-r)$ to $\BC(w_{J \cup \{-i_{k+1}\}})$. It is clear that $\BC(w_J) = \BC^{-r}(w_J)$ if $-i_{k+1} \in J$. The rest of the proof follows from chasing a diagram similar to the one described in the proof of theorem $\ref{M2a}$. We leave it to the reader to complete the proof.

\section{The invariant $\sBC(L)$ of (framed) links and the Galois symmetry} \label{one}

\medskip
\noindent
By now it it clear that the invariant $\sBC(w_I)$ that has been studied in the previous few sections enjoys several important properties. Theorem \ref{M1} shows that $\sBC(w_I)$ is equivalent to its cyclic permutation $\sBC(w_{\overline{I}})$, which is known as the Markov 1 property. In theorem \ref{main1} we showed that $\sBC(w_I)$ did not depend on the indexing sequence $I$ used to present the braid word $w$. And finally, in the case $G = \SU(r)$, theorems \ref{M2a} and \ref{M2b} demonstrated that $\sBC(w_I)$ satisfied an interesting variant of the Markov 2 property. 

\medskip
\noindent
Of particular importance is the Markov 2 property, which we turn our attention to for the moment. Recall that by theorems \ref{M2a} and \ref{M2b}, we have elementary quasi-equivalences
\[ \sBC(w_{I(r)}) \longrightarrow  \SU(r+1)_+ \wedge_{\Delta_r} \Sigma^2 \sBC_{T^r}(w_I) , \, \, \]
\[ \sBC(w_{I(-r)}) \longrightarrow \SU(r+1)_+ \wedge_{\Delta_r} \Sigma^{-1} \sBC_{T^r}(w_I), \]
where we consider $\Delta_r$ as a map $\Delta_r : T^r \longrightarrow T^{r+1}$, $(t_1, \ldots, t_{r-1}, t_r) \longmapsto (t_1, \ldots, t_{r-1}, t_r, t_r)$, 
and $I(\pm r)$ is the multiindex representing the braid word $w_I \sigma_r^{\pm 1}$ in $\Br(r+1)$. 

\medskip
\noindent
The difference in the number of suspensions in these two equivalences is easily corrected when we normalize and consider the invariant $\sNBC(w)$ of definition \ref{stableLI}. A more subtle issue is that the invariant $\sBC(w_{I(\pm r)})$ is equivalent to the spectrum obtained by inducing the $T^r$-spectrum $\sBC_{T^r}(w_I)$ to a $\SU(r+1)$ spectrum, along a {\em non-standard} copy of the torus $T^r \subset \SU(r+1)$ given by $\Delta_r$. This observation points to a canonical enhancement of $\sBC(w)$ to a {\em framed} link invariant. The following definition describes how one may achieve this. 

\medskip
\begin{defn} \label{redef} (Framed enhancement of $\sBC(w_I)$)

\noindent
Let $T^w \subset T^r$ denote the sub torus invariant under the permutation in $\Sigma_r$ underlying $w_I$, and let $\mbox{X}^*(T^w)$ denote the character lattice of $T^w$. Note that the factors of the torus $T^w$ are canonically in bijection with the components of the link obtained in closing the braid $w_I$. In particular, elements of $\mbox{X}^\ast(T^w)$ parametrize all possible framings of this link. 

\medskip
\noindent
We define the framed enhancement of the filtered $T^r$-spectrum $\sBC_{T^r}(w_I)$ (equivalently, the filtered $\SU(r)$-spectrum $\sBC(w_I)$)  to be a pair $(\sBC_{T^r}(w_I), \chi)$ (resp. $(\sBC(w_I), \chi)$), where $\chi \in \mbox{X}^*(T^w)$. 
\end{defn}

\noindent
The curious variation of the second Markov move translates into an elementary claim:

\begin{corr} \label{redef2}
Let $\Delta_r : T^r \longrightarrow T^{r+1}$ be the map described above. Then $\Delta_r$ restricts to an isomorphism $\Delta_r : T^{w} \longrightarrow T^{w(\pm r)}$, where $T^{w(\pm r)} \subseteq T^{r+1}$ is the sub torus invariant under $w_I \sigma_r^{\pm 1}$. In particular, $\Delta_r$ induces a canonical bijection $\Delta_r^\ast : \mbox{X}^\ast(T^{w(\pm r)}) \cong \mbox{X}^\ast(T^w)$ lifting to framed elementary quasi-equivalences
 \[ (\sBC(w_{I(r)}), \chi) \longrightarrow \SU(r+1)_+ \wedge_{T^{r+1}} T^{r+1}_+ \wedge_{\Delta_r} (\Sigma^2 \sBC_{T^r}(w_I), \Delta_r^\ast(\chi)), \, \, \]
\[ (\sBC(w_{I(-r)}), \chi) \longrightarrow \SU(r+1)_+ \wedge_{T^{r+1}} T^{r+1}_+ \wedge_{\Delta_r} (\Sigma^{-1} \sBC_{T^r}(w_I), \Delta_r^\ast(\chi)). \]
\end{corr}

\medskip
\begin{Conv} \label{conv}
For the sake of simplifying notation, the framing enhancement will be implicitly understood in what follows. We will supress the notation that makes explicit reference to the framing when referring to $\sBC(w_I)$. 
\end{Conv}

\medskip
\noindent
\begin{defn} \label{stableLI2} (Normalization of strict broken symmetries as a function of links)

\noindent
Given a link $L$ described by the closure of a braid word on $r$ strands, define the normalized, filtered $\SU(r)$-equivariant spectrum as described in \ref{stableLI}
\[ \sBC(L) := \sBC(w) = \Sigma^{l(w_I)} \sBC(w_I)[\varrho_I], \]
where $w \in \Br(r)$ is any braid with presentation $w_I$, that represents the link $L$. This normalization corrects for the filtration shifts and suspensions that one encounters in proving invariance under the various properties (see remark \ref{normalization}). \end{defn}

\noindent
Having verified braid invariance and invariance under Markov moves, we conclude

\medskip
\begin{thm} \label{coh}
As a function of a (framed) link $L$ underlying the closure of a braid word on $r$-strands, the filtered $\SU(r)$-spectrum $\sBC(L)$ is well-defined up to quasi-equivalence. In particular, the limiting equivariant stable homotopy type $\sBCU(L)$ is a well-defined (framed) link invariant.  
\end{thm}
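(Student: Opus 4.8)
The plan is to derive the statement from Markov's theorem together with the structural results of the preceding sections. Recall that, by Markov's theorem, two braids (on possibly different numbers of strands) have isotopic closures if and only if one can be obtained from the other by a finite sequence of moves of three kinds: (a) passage between two words representing the same element of $\Br(r)$; (b) conjugation within a fixed $\Br(r)$; and (c) (de)stabilization, $w\in\Br(r)\leftrightarrow w\sigma_r^{\pm1}\in\Br(r+1)$. Since quasi-equivalence is generated by zig-zags and is in particular transitive, it suffices to verify that the normalized filtered spectrum $\sNBC(w)=\Sigma^{l(w_I)}\sBC(w_I)[\varrho_I]$ of definition \ref{stableLI} is invariant, up to quasi-equivalence, under each of these moves; concatenating the invariances along a Markov sequence then shows that $\sBC(L)$ depends only on $L$, and Remark \ref{limiting} upgrades this to the corresponding statement for the limiting spectrum $\sBCU(L)$. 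I would use repeatedly the observation recorded after Claim \ref{q-isom} that the shift $s:X\to X[1]$ is an elementary quasi-equivalence, so that a filtration shift never changes the quasi-equivalence class; this renders the $[\varrho_I]$-bookkeeping irrelevant for the present statement, and only the suspension bookkeeping must be tracked.

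Move (a) is exactly Theorem \ref{main1}, whose proof combines the braid-relation invariance of Theorem \ref{braid rel} with the inverse-relation invariance of Theorem \ref{inv rel}. For move (b), I would first observe that a cyclic permutation of an indexing sequence realizes conjugation by a single generator: in the notation of Theorem \ref{M1} one has $w_{\overline I}=\sigma_{i_1}^{-\epsilon_1}w_I\sigma_{i_1}^{\epsilon_1}$, and \ref{M1} provides an honest levelwise equivalence $F_t\,\sBC(w_I)\he F_t\,\sBC(w_{\overline I})$. An arbitrary conjugation is a finite composite of conjugations by generators and their inverses, so it remains to handle $w\mapsto\sigma_i^{\epsilon}w\sigma_i^{-\epsilon}$: choosing a presentation $w_J$ of $w$ with indexing sequence $J$, the concatenated sequence $\{\epsilon i\}\cup J\cup\{-\epsilon i\}$ may be cyclically permuted (iterating \ref{M1}) to $\{-\epsilon i,\epsilon i\}\cup J$ and then, by Theorem \ref{inv rel}, is quasi-equivalent to $\Sigma\,\sBC(w_J)[-1]$. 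The two added letters contribute one positive and one negative exponent, so $l$ decreases by $1$, cancelling the extra $\Sigma$; the residual $[-1]$, like all shift discrepancies, disappears up to quasi-equivalence. Hence $\sNBC$ factors through conjugacy classes in $\Br(r)$.

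Move (c), the Markov stabilization, is the content of Theorems \ref{M2a} and \ref{M2b}, and here lies the one genuine subtlety. Those theorems give elementary quasi-equivalences exhibiting $\sBC(w_{I(r)})$ (resp. $\sBC(w_{I(-r)})$) as $\SU(r+1)_+\wedge_{\Delta_r}\Sigma^{2}\sBC_{T^r}(w_I)$ (resp. $\SU(r+1)_+\wedge_{\Delta_r}\Sigma^{-1}\sBC_{T^r}(w_I)$) --- that is, induced up from $\sBC_{T^r}(w_I)$ along the \emph{non-standard} rank-$r$ torus $\Delta_r\subset\SU(r+1)$, rather than along the standard inclusion used to build $\sBC(w_I)$. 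The remedy is Claim \ref{redef} and Corollary \ref{redef2}: pass to the $e_r$-lift model $\sBC(w_I,e_r)$, a $\T\times\SU(r)$-spectrum which recovers $\sBC(w_I)$ after inducing along the multiplication $m:\T\times\SU(r)\to\SU(r)$ (since $m\circ e_r$ is the standard inclusion) and which is carried to $\sBC(w_{I(\pm r)},e_{r+1})$ under stabilization (since $e_{r+1}\circ\Delta_r$ factors through $e_r$). Adopting Convention \ref{conv}, i.e. working throughout with these $\T\times\SU(r)$-models, makes the Markov 2 invariance manifest; the discrepancy between the $\Sigma^{2}$ and the $\Sigma^{-1}$ is exactly the discrepancy between $l(w_{I(r)})=l(w_I)-2$ and $l(w_{I(-r)})=l(w_I)+1$, so it is absorbed by the normalizing suspension $\Sigma^{l(w_I)}$, and the remaining filtration shift is again immaterial.

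To finish, given two braid presentations of a link $L$, Markov's theorem connects them by a finite chain of moves (a)--(c), each inducing a quasi-equivalence of the associated normalized filtered spectra, so $\sBC(L)$ is well defined up to quasi-equivalence; applying Remark \ref{limiting} along this chain shows the limiting spectra $\sBCU(L)$ are $G$-equivariantly homotopy equivalent, so $\sBCU(L)$ is a link invariant (understood via Convention \ref{conv} when comparing across different numbers of strands). The main obstacle is not any one of the three moves by itself but, in move (c), the reconciliation of the non-standard torus $\Delta_r$ from Theorems \ref{M2a}--\ref{M2b} with the standard induction defining $\sBC$ --- handled by the $e_r$-lift --- and, throughout, the verification that the suspension shifts built into the normalization $\sNBC$ are globally consistent across all three moves.
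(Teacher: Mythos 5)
Your proof is correct and takes essentially the same approach as the paper, which assembles Theorem \ref{coh} from Markov's theorem together with Theorems \ref{main1}, \ref{M1}, \ref{M2a}--\ref{M2b}, and Corollary \ref{redef2} / Convention \ref{conv}. Your explicit reduction of arbitrary conjugation to cyclic permutation combined with the inverse relation, and your check that the suspension normalization $\Sigma^{l(w_I)}$ absorbs the $\Sigma^2$/$\Sigma^{-1}$ discrepancy of \ref{M2a}--\ref{M2b}, merely make precise two steps the paper leaves implicit.
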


\begin{remark} \label{normalization}
We make note here that the normalization of the spectrum $\sBC(L)$ we have provided in definition \ref{stableLI2} is purely topological in nature, and may differ from the standard normalization for link invariants once we identify the cohomology of $\sBC(L)$ with such an invariant. The table given in definition \ref{condM2} indicates how the topological normalization is sensitive to the various cohomology theories we will consider in the sequel. 
\end{remark}

\noindent
At this point we come to an interesting symmetry that is compatible with the constructions of the previous chapters, and consequently descends to $\sBC(L)$. This symmetry is defined by the Galois action, denoted by $\sigma$, given by complex conjugation on the spaces $\BC(w_I)$, and the bundles $\zeta_i$. Let us study this symmetry in some detail. 

\medskip
\noindent
Given a positive sequence $I$, we define the $\sigma$ action on $\BC(w_I)$ as 
\[ \sigma [(g, g_{i_1}, \ldots, g_{i_k})] := [(\overline{g}, \overline{g}_{i_1}, \ldots, \overline{g}_{i_k})], \]
where $\overline{h}$ denotes the complex conjugation of the element $h \in \SU(r)$. Since complex conjugation is an automorphism of $\SU(r)$ that preserves all the compact subgroups $G_i$, we see that $\sigma$ gives rise to an (anti linear) automorphism of the $\SU(r)$-space $\BC(w_I)$. Now recalling the definition \ref{desusp} of the Thom spectrum $S^{-\zeta_i}$ as $\Sigma^{r} S^{-\mathfrak{g}_i}$, we may define $\sigma$ to act on $S^{-\zeta_i}$ by smashing the antipode action on $\R^r$, with  the dual of the action induced by complex conjugation on $S^{\mathfrak{g}_i}$. Recall that as a real virtual $T$-representation $\zeta_i$ is isomorphic to the root space representation for the root $\alpha_i$. As such, $\sigma$ can be identified with the canonical automorphism of $\zeta_i$ that acts by complex conjugation on the root space. 

\medskip
\noindent
It follows from the above observation that $\sigma$ extends to an anti-linear automorphism of the $\SU(r)$-spectrum $\BC(w_I)$ for arbitrary words $I$. One may verify that the constructions leading to the filtered spectrum $\sBC(w_I)$ are natural with respect to $\sigma$. In particular, $\sigma$ extends to a Galois symmetry acting on $\sBC(w_I)$.

\medskip
\begin{remark} \label{Galois}
Let us explicitly describe how one normalizes the action of $\sigma$ on $\sBC(w_I)$ so that the link invariant $\sBC(L)$ is compatible with the Galois action. For this, one recalls that the normalization by the suspension $\Sigma^{l(w_I)}$ is dictated by the suspensions appearing in theorems \ref{M2a} and \ref{M2b}. On unraveling the source of suspensions, we see that one must identify the suspension $\Sigma^{l(w_I)}$ as $\Sigma^{-l_-(w_I)} \wedge \C_+^{l_-(w_I)} \wedge \C_+^{-l_+(w_I)}$, with $\sigma$ acting by complex conjugation on the last two factors, and where $\C_+^{k}$ denotes the one-point compactification of the complex vector space $\C^{k}$. 
\end{remark}

\medskip
\noindent
Now let $\E_G$ denote a family of equivariant cohomology theories indexed by the collection $G = \SU(r)$, with $r \geq 1$, endowed with natural compatiblity under restrictions 
\[ \Delta_j : \E_{\SU(r_1)} \wedge \E_{\SU(r_k)} \ldots \wedge \E_{\SU(r_m)} \llra{\cong} j^\ast \E_{\SU(m)}, \, \, \mbox{where} \, \, \,  j : \SU(r_1) \times \SU(r_2) \times \ldots \times \SU(r_k) \hookrightarrow \SU(m), \]
is an arbitrary injection of Lie groups. For instance, we may take $\E_G$ to be a Global cohomology theory \cite{Sc}. We do not assume that $\E_G$ is multiplicative for now. In the case of multiplicative theories, one will ask for some more structure (see \cite{Ki2} section 3). 

\medskip
\begin{defn} \label{condM2} (INS-Type equivariant cohomology theories)

\noindent
Given a family of equivariant cohomology theories $\{ \E_{\SU(r)}, r \geq 1\}$ as above, we call them INS-type theories if the elementary quasi-equivalences flagged in remark \ref{q-equiv} (called $\mbox{B}$-type maps), and those of \ref{M2a} and \ref{M2b} (called $\mbox{M2a}$ and $\mbox{M2b}$-type maps resp.) induce injective, null or surjective maps on the associated graded complex. For such theories, claims \ref{q-isom} and \ref{q-isom2} show that the quasi-isomorphism type of the bi-complex $\E_{\SU(r)}^s(\Gr_t(\sBC(L)))$ is well defined up to a shift in bi-degree. By claim \ref{q-isom2} and remark \ref{regrading}, we see that the shift will depend only on the number of elementary quasi-equivalences between two braid presentations of the link L that induce null maps on the associated graded complex. Below we list how the cohomology theories considered in \cite{Go, Ki2} are expected to behave under the $\mbox{B}$, $\mbox{M2a}$ and $\mbox{M2b}$-type elementary quasi-equivalences above. 
\end{defn}
\[
\begin{tabular}{|c|c|c|c|}
\hline
$\mbox{Equivariant cohomology Theory}$ & $\mbox{B-type maps}$ & $ \mbox{M2a-type maps} $&$ \mbox{M2b-type maps}$\\
\hline
$\mbox{Singular Coh. (untwisted)}$ \cite{Ki2} & $\mbox{Injective}$ & $\mbox{Null} $& $ \mbox{Null}$\\

$\mbox{Singular Coh. (twisted)}$ \cite{Go} & $\mbox{Injective}$ & $\mbox{Null}$&$ \mbox{Injective}$\\

$\mbox{K-theory (twisted)}$ \cite{Ki2} & $\mbox{Injective (\dag)}$ & $\mbox{Null (\dag)}$ &$ \mbox{Injective (\dag)} $\\
\hline
\end{tabular}
\]
$\dag$ indicates that we expect the property to hold but we have not shown it to be true. 

\medskip
\noindent
Given an INS-type equivariant cohomology theory, an obvious strategy to construct group valued link invariants from $\sBC(L)$ is to study the spectral sequence that computes the cohomology $\E^\ast_{\SU(r)}(\sBCU(L))$ by virtue of the filtration. The $E_2$-term of this spectral sequence is the cohomology of the complex $\E^\ast_{\SU(r)}(\Gr_t(\sBC(L)))$, described in example \ref{FiltBC}. 

\medskip
\begin{thm} \label{SS}
Assume that $\E_{\SU(r)}$ is a family of INS-type $\SU(r)$-equivariant cohomology theories. Then, given a link $L$ described as a closure of a braid $w$ on $r$-strands as above, one has a cohomologically graded spectral sequence converging to $\E^\ast_{\SU(r)}(\sBCU(L))$ and with $E_1$-term given by 
\[ E_1^{t,s} = \bigoplus_{J \in \I^t/\I^{t-1}} \E_{\SU(r)}^s(\BC(w_J))  \, \, \Rightarrow \, \, \E_{\SU(r)}^{s+t+l(w_I)}(\sBCU(L)). \]
The differential $d_1$ is the canonical simplicial differential induced by the functor described in definition \ref{poset}. In addition, the terms $E_q(L)$ are invariants of the link $L$ for all $q \geq 2$, upto an indeterminacy given by an overall shift in bi-degree. 
\end{thm}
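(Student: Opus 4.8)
The plan is to build the spectral sequence as the standard one associated to the finite increasing filtration $F_t\,\sBC(w_I)$ of Definition \ref{Filt}, and then leverage the invariance results proved in the previous sections to show that the pages from $E_2$ onward do not depend on the chosen braid presentation. First I would record the elementary homological algebra: for any bounded-below filtered $G$-spectrum $X$ and any $G$-equivariant cohomology theory $\E_G$, applying $\E_G^\ast$ to the tower of cofiber sequences $F_{t-1}X\to F_tX\to F_tX/F_{t-1}X$ yields an exact couple, hence a spectral sequence whose $E_1$-page is $\E_G^\ast(F_tX/F_{t-1}X)$ with $d_1$ the composite induced by the connecting maps $\partial_t$ of Definition \ref{shift}. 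Since the filtration has finite length $k=|I|$, convergence to $\E_G^\ast(\hocolim_t F_tX)=\E_G^\ast(\sBCU(w_I))$ is automatic (a finite filtration gives a conditionally and strongly convergent spectral sequence, with no $\lim^1$ issues). Plugging in $X=\Sigma^{l(w_I)}\sBC(w_I)$ and using the explicit description of the associated graded from Example \ref{FiltBC} and the remark following Definition \ref{Filt}, namely $F_tX/F_{t-1}X\simeq\bigvee_{J\in\I^t/\I^{t-1}}\Sigma^t\BC(w_J)$ (up to the global suspension $\Sigma^{l(w_I)}$), identifies $E_1^{t,s}=\bigoplus_{J\in\I^t/\I^{t-1}}\E_{\SU(r)}^s(\BC(w_J))$ and gives the claimed target $\E_{\SU(r)}^{s+t+l(w_I)}(\sBCU(L))$ after accounting for the suspension shift. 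The identification of $d_1$ with the simplicial differential of Definition \ref{poset} is immediate from the definition of $\partial_t$ as the signed sum over nontrivial indecomposable morphisms.

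Next I would address invariance of the pages $E_q(L)$ for $q\ge 2$. By Theorem \ref{coh} (equivalently Theorem \ref{main1} together with the Markov 1 and Markov 2 results), any two braid presentations of $L$ are connected by a zig-zag of elementary quasi-equivalences of filtered $\SU(r)$-spectra, of the three flagged types: B-type maps from the proof of Theorem \ref{braid rel}, and M2a/M2b-type maps from Theorems \ref{M2a} and \ref{M2b} (the inversion and reflection relations split on the associated graded, so they contribute only formally, as noted in Remark \ref{q-equiv}). For each such elementary quasi-equivalence $\rho:X\to Y$, the hypothesis that $\E_{\SU(r)}$ is of ISN-type means $\rho^\ast$ on the associated graded complex is injective, surjective, or null. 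In the injective or surjective case, Claim \ref{q-isom} shows $\rho^\ast$ is a quasi-isomorphism of the $E_1$-complexes, hence induces an isomorphism on $E_2$ and on all higher pages; in the null case, Claim \ref{q-isom2} produces an intermediate filtered spectrum $P_\rho$ and a further zig-zag $Y\to P_\rho\leftarrow X[1]$ in which both maps are quasi-isomorphisms on associated graded complexes, so again $E_q$ is preserved for $q\ge 2$ (the shift $X\mapsto X[1]$ only reindexes the associated graded by Remark \ref{regrading}, which is absorbed into the normalization $\sNBC(w)$ and does not affect the isomorphism type of the pages). Composing these isomorphisms along the zig-zag gives a well-defined isomorphism between the $E_q$-pages attached to any two presentations, proving that $E_q(L)$ is a link invariant.

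The main obstacle is entirely bookkeeping rather than conceptual: one must check that the isomorphisms on $E_q$-pages produced at each step of the zig-zag are compatible, so that the resulting identification $E_q(L)$ is canonical and not merely "exists." Concretely, the subtle point is that a null B-type or M2-type map forces passage through $X[1]$, introducing a shift that must be tracked consistently across the whole zig-zag; here one invokes the fact (recorded in Definition \ref{condM2}, via Claim \ref{q-isom2} and Remark \ref{regrading}) that the total shift depends only on the number of null-inducing elementary equivalences between the two presentations, so after normalizing by $\sNBC$ the bidegree is pinned down. A secondary routine check is that the spectral sequence is natural in $\E_{\SU(r)}$ and compatible with the restriction isomorphisms $\E_{\SU(r)}\cong\iota^\ast\E_{\SU(r+1)}$, which is what makes the Markov 2 comparison (passing between $\SU(r)$ and $\SU(r+1)$, and between the tori $T^r$ and $\Delta_r$ via Corollary \ref{redef2}) land in the correct spectral sequence; this follows formally from the naturality of the exact couple construction. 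With these compatibilities in hand the theorem follows.
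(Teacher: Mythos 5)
Your proposal is correct and takes essentially the same approach the paper does: the spectral sequence is the standard one associated to the finite filtration of Definition \ref{Filt} (with $E_1$-page read off from Example \ref{FiltBC}), and the invariance of $E_q$ for $q\ge 2$ follows by running the zig-zag of elementary quasi-equivalences from Theorem \ref{coh} through Claims \ref{q-isom} and \ref{q-isom2}, using the ISN hypothesis exactly as Definition \ref{condM2} anticipates. The paper leaves this argument implicit in the surrounding discussion rather than writing out a formal proof, and your write-up fills in precisely those implicit steps, including the correct observation that the only delicate point is tracking the filtration shifts introduced by null-inducing maps via Remark \ref{regrading}.
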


\medskip
\begin{remark} \label{Galois2}
We notice that the Galois symmetry $\sigma$ descends to a symmetry of each page of the spectral sequence above. In other words, the link invariants $E_q(L)$ admit an extra symmetry given by the involution $\sigma$. 
\end{remark}

\section{The $p$-completion, \'Etale homotopy type and the Frobenius} \label{category}

\medskip
\noindent
In this very brief section, we point out a piece of algebraic structure that extends the Galois symmetry described in the previous section. This structure appears on $p$-completing our constructions. We have kept this section brief since it deviates from the general geometric flavor of the arguments we have been describing in this document. We plan to return to this structure in the future. 

\medskip
\noindent
Let us revert back to a general compact connected Lie group $G$, and assume that it is the unitary form of the complex points of a Chevalley group scheme $G_\Z$. For instance, we may take $G_\Z$ to be $\GL(r)_\Z$ in the case $G = \SU(r)$. The groups $G_i$ in definition \ref{BC+} admit $\Z$-forms given by the corresponding split reductive Levi factors. It follows that for positive sequences $I$, the spaces of broken symmetries $\BC(w_I)$ and the Bott-Samelson spaces $\BSa(w_I) := \BC(w_I)/T$ also admit $\Z$-forms $\BC_\Z(w_I)$ and $\BSa_\Z(w_I)$ resp. 

\medskip
\noindent
Now \'Etale homotopy theory \cite{F} allows us to compare the \'Etale homotopy type of schemes over the algebraic closure $\overline{\F}_q$, with the analytic space of complex points after $p$-completion at any prime $p \neq q$. It follows from these ideas that the $p$-completion of their respective suspension spectra are also equivalent. In particular, we recover the action of the Frobenius automorphism $\psi_q$ on the $p$-complete spectrum $\mbox{L}_{H\Z/p} \BC(w_I)$, and $\mbox{L}_{H\Z/p} \BSa(w_I)$ where $\mbox{L}_{H\Z/p}$ denotes Bousfield localization with respect to mod-$p$ homology. 

\medskip
\noindent
Continuing to work with positive sequences $I$, the reader can confirm that maps used to show the braid invariance of $\BC(w_I)$ or $\BSa(w_I)$ (up to quasi-equivalence) are all algebraic. In particular, the action of the Frobenius $\psi_q$ on $p$-complete spectra of broken symmetries extends to an action on the $p$-complete spectra of strict broken symmetries $\mbox{L}_{H\Z/p} \sBC(w_I)$. Since one can think of the diagram that defines $\sBC(w_I)$ as the diagram of complex points of a simplicial scheme defined over $\Z$, we conclude 

\medskip
\begin{thm} \label{Etale}
Let $I$ be a positive sequence, and let $p \neq q$ be distinct primes, then the stable $p$-completion of the \'Etale homotopy type of the simplicial scheme $\sBC_{\overline{\F}_q}(w_I)$ or the corresponding Rouquier complex $\sBSa_{\overline{\F}_q}(w_I)$, is invariant under braid relations in the presentation $w_I$ up to quasi-equivalence in the category of $p$-complete spectra endowed with an action of a (Frobenius) automorphism $\psi_q$. 
\end{thm}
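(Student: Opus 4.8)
The plan is to assemble Theorem \ref{Etale} by combining the étale comparison machinery with the already-established presentation-independence results, carefully tracking which maps in the proofs survive the algebraization. First I would record the basic setup: for a positive sequence $I$, the Chevalley $\Z$-forms of the $G_i$ give a $\Z$-form $\BC_\Z(w_I)$ of the space of broken symmetries, and, more generally, the diagram $\{\BC(w_J)\}_{J \in 2^I}$ of definition \ref{poset} is the diagram of complex points of a diagram of schemes over $\Z$ (the connecting maps for positive sequences being the inclusions $T \hookrightarrow G_{i_j}$, which are visibly algebraic). Hence the homotopy colimit defining $\sBCU(w_I)$ and its filtration $F_t\,\sBC(w_I)$ are realized by a filtered simplicial scheme $\sBC_\Z(w_I)$ over $\Z$. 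Base change to $\overline{\F}_q$ gives $\sBC_{\overline{\F}_q}(w_I)$, and base change to $\C$ followed by taking complex points recovers the analytic filtered spectrum $\sBC(w_I)$.

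The second step is to invoke Friedlander's étale homotopy theory \cite{F}: for a (simplicial) scheme of finite type over $\Z$, the $p$-completion of the étale homotopy type of its reduction mod $q$ agrees with the $p$-completion of the analytic homotopy type of its complex points, for $p \neq q$; moreover this comparison is functorial in the scheme and compatible with the formation of homotopy colimits over a fixed diagram shape. Applying this levelwise to the filtration, one obtains an $\mbox{L}_{H\Z/p}$-equivalence of filtered spectra
\[ \mbox{L}_{H\Z/p}\,(\sBC_{\overline{\F}_q}(w_I))^{\mathrm{\acute et}} \,\simeq\, \mbox{L}_{H\Z/p}\,\sBC(w_I), \]
and the Frobenius endomorphism $\psi_q$ of $\sBC_{\overline{\F}_q}(w_I)$ (acting on the mod-$q$ reduction, hence on its étale homotopy type) transports to an automorphism $\psi_q$ of $\mbox{L}_{H\Z/p}\,\sBC(w_I)$ of the filtered spectrum. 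This reduces the theorem to the following purely analytic assertion: the filtered $p$-complete spectrum $\mbox{L}_{H\Z/p}\,\sBC(w_I)$, together with its $\psi_q$-action, depends only on $w \in \Br(G)$ up to quasi-equivalence (of $\psi_q$-equivariant $p$-complete filtered spectra).

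For the third step I would revisit the proofs of Theorems \ref{braid rel} and \ref{M1} (the only relations needed for positive braids on a single $G$; the inverse relation and Markov~2 do not enter the positive-sequence $\psi_q$-statement) and check that every map in every zig-zag is the realization of a map of $\Z$-schemes (or $\Z$-diagrams of schemes). The cyclic-permutation equivalence $\tau$ of Theorem \ref{M1} is manifestly algebraic — it is built from multiplication in $G$. For braid invariance, the Schubert varieties $\mathcal{X}_K$ and their lifts $\Sh_K$ of definition \ref{Schubert} are defined over $\Z$ via the split reductive Levi subgroups, the Bruhat cell decomposition used in Lemma \ref{elem q-equiv} is defined over $\Z$, and the projection/inclusion functors $\pi_m, \iota_m$ of definition \ref{PORS} together with the natural transformations inducing $\pi_m, \iota_m$ on strict broken Schubert spectra are all built from algebraic group multiplication and the canonical inclusions; hence the acyclicity arguments (objectwise retractions coming from $\Z$-split summands) take place already over $\Z$ and, after $p$-completion and étale realization, commute with $\psi_q$. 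Therefore each elementary quasi-equivalence in the proof of Theorem \ref{main1} (restricted to positive sequences and a single group $G$) lifts to a $\psi_q$-equivariant elementary quasi-equivalence of $p$-complete filtered spectra, which is exactly what is needed.

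The main obstacle I anticipate is not any single step but the bookkeeping required to make the étale comparison genuinely \emph{functorial and filtration-compatible} at the level of filtered spectra with an endomorphism: one must present $\sBC_\Z(w_I)$ as a single finite diagram of finite-type $\Z$-schemes (so that Friedlander's comparison applies uniformly), verify that $p$-completion commutes with the relevant finite homotopy colimits (it does, since these are finite and we work after $\mbox{L}_{H\Z/p}$), and confirm that the Frobenius, which a priori acts on the mod-$q$ fibre, is carried through the chain of equivalences to a well-defined self-map of the analytic $p$-complete object — this last point is standard in étale homotopy theory but deserves an explicit sentence. A secondary subtlety is that ``quasi-equivalence'' in definition \ref{quasi-equiv} is defined for filtered $G$-spectra; here one wants the analogous notion in filtered spectra equipped with a $\psi_q$-action, so one should remark that acyclicity and the zig-zag construction go through verbatim once the null chain homotopies are chosen $\psi_q$-equivariantly, which is automatic because they arise from algebraic retractions. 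Granting these, the theorem follows by concatenating the algebraic zig-zag of Theorem \ref{main1} with the étale comparison equivalence.
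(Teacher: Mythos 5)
Your proposal is correct and follows essentially the same approach as the paper, which likewise passes through the $\Z$-forms of $G_i$ and $\BC(w_I)$, invokes Friedlander's \'etale comparison to transport $\psi_q$ to the $p$-complete analytic filtered spectrum, and then appeals to the algebraicity of the maps in the presentation-independence zig-zags; the paper merely asserts those algebraicity checks where you carry them out. One minor streamlining: for two positive presentations of the same positive braid only the braid relations (Theorem~\ref{braid rel}) are required, so the Markov~1 equivalence of Theorem~\ref{M1} is not actually needed for this statement, though verifying its algebraicity of course does no harm.
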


\section{Appendix: Homotopy Colimits} \label{appendix}

\medskip
\noindent
We review the construction and properties of the homotopy colimit of small diagrams of $G$-spectra. The standard reference is the last few chapters of \cite{BK}, though the reader may find several very helpful modern sources (for instance \cite{D}). 

\medskip
\noindent
Let $\mathscr{F}$ denote a functor from a small category $\mathcal{C}$ to the category $G \mathscr{S}$ of $G$-spectra
\[ \mathscr{F} : \mathcal{C} \longrightarrow G \mathscr{S}. \]
Then $\mathscr{F}$ gives rise to a simplicial $G$-spectrum which we denote by $N_\bullet(\mathscr{F})$, whose $k$-simplices $N_k(\mathscr{F})$ are defined to be the $G$-spectrum
\[ N_k(\mathscr{F}) = \bigvee_{i_0 \rightarrow i_1 \rightarrow \cdots \rightarrow i_k} \mathscr{F}(i_0), \quad \mbox{for $k>0$}, \quad \quad N_0(\mathscr{F})= \bigvee_{i \in Ob(\mathcal{C})} \mathscr{F}(i) \]
where $i_0 \rightarrow i_1 \rightarrow \cdots \rightarrow i_k$ denotes all possible length $k$ sequences of composable morphisms in $\mathcal{C}$, and $\mathscr{F}(i_0)$ denotes the value of the functor $\mathscr{F}$ on the initial object $i_0$ of the sequence. The simplicial maps are induced by their counterparts in the nerve $N_\bullet(\mathcal{C})$ of the category $\mathcal{C}$. More precisely, each of the $k+1$ face maps from $N_k(\mathscr{F})$ to $N_{k-1}(\mathscr{F})$ is given by either one of the two maps induced by dropping the terminal morphisms, or by the composition of any of the $k-1$ sequential pairs of morphisms in the length $k$ sequence of composable morphisms. The $k+1$ degeneracy maps from $N_k(\mathscr{F})$ to $N_{k+1}(\mathscr{F})$ are given by the insertion of the identity morphism in the $k+1$ possible spots. 

\medskip
\noindent
A simplicial $G$-spectrum can be described as a contravariant functor from the category of ordered finite sets to $G\mathscr{S}$. As such, the spectrum $N_k(\mathscr{F})$ can be identified with the value of this functor on the set $[k]$ of integers $\{0, \cdots, k \}$ in increasing order. The $k+1$ face and degeneracy maps described above then correspond to the order preserving injective maps $[k-1] \rightarrow [k]$ and the order preserving surjective maps $[k+1] \rightarrow [k]$ respectively. The homotopy colimit of $\mathcal{F}$ is then defined as the geometric realization of the simplicial $G$-spectrum $N_\bullet(\mathscr{F})$. In order to describe this geometric realization, notice that there is a canonical (covariant) functor $\Delta_\bullet$ from the category of finite ordered sets to spaces, whose value on the set $[k]$ is given by the geometric $k$-simplex $\Delta_k$. By identifying the vertices of $\Delta_k$ with the elements of $[k]$, one obtains canonically induced affine maps between the spaces $\Delta_k$ that correspond to face and degeneracy maps. 

\medskip
\begin{defn} \label{hocolim} (Homotopy colimit)

\noindent
The homotopy colimit of a functor $\mathcal{F} : \mathcal{C} \longrightarrow G\mathscr{S}$ is defined as the $G$-spectrum given by the coequalizer (also known as the geometric realization of the simplicial spectrum $N_\bullet(\mathscr{F})$)
\[  \bigvee_{[k] \rightarrow [l]} N_l(\mathscr{F}) \wedge (\Delta_k)_+ \rightrightarrows \bigvee_n N_n(\mathscr{F}) \wedge (\Delta_n)_+ \longrightarrow \hocolim_{\mathcal{C}} \mathscr{F}, \]
where the two maps are given by applying the order preserving map $[k] \rightarrow [l]$ to the two arguments of $N_l(\mathscr{F}) \wedge (\Delta_k)_+$ respectively. 
\end{defn}

\noindent
The categories $\mathcal{C}$ that are relevant to us in this document are very special. These categories have the shape of finite posets $\mathcal{P}$, such that their opposite poset $\overline{\mathcal{P}}$ is an unaugmented finite CW poset \cite{B} (Section 3)\footnote{The posets in \cite{B} are augmented by adding a minimal object for the sake of convenience} . More precisely, given such a poset $\mathcal{P}$, there is a unique regular finite CW complex $X_{\mathcal{P}}$, so that the poset of cells (under inclusion of their closures) is equivalent to $\overline{\mathcal{P}}$. Furthermore, under this equivalence, cells of dimension $k$ in $X_{\mathcal{P}}$ are indexed by elements $p \in \mathcal{P}$ such that the longest nondegenerate ascending chain of morphisms in $\mathcal{P}$ starting with $p$ has length $k$. In fact, any maximal nondegenerate ascending chain starting with $p$ has length $k$. Let us call this integer $k$ the dimension of $p$ and denote that by $|p| = k$. 

\begin{remark} \label{join}
Given two finite posets $\mathcal{P}$ and $\mathcal{Q}$, so that $\overline{\mathcal{P}}$ and $\overline{\mathcal{Q}}$ are unaugmented CW posets, one may define a new finite poset called the join of $\mathcal{P}$ and $\mathcal{Q}$, denoted by $\mathcal{P}$ \textcircled{$\star$} $\mathcal{Q}$, whose opposite poset is also a CW poset
\[ \mbox{$\mathcal{P}$ \textcircled{$\star$} $\mathcal{Q}$} :=  \{\mathcal{P} \cup  \infty \} \times \{ \mathcal{Q} \cup \infty \} - (\infty, \infty) \]
where $\mathcal{P} \cup \infty$ denotes the poset obtained by adding a terminal object $\infty$ to $\mathcal{P}$ (see \cite{M}, Proposition 1.1). The finite regular CW complex corresponding to $\mathcal{P}$ \textcircled{$\star$} $\mathcal{Q}$ is given by the standard (regular) CW structure on the join $X_{\mathcal{P}} \star X_{\mathcal{Q}}$. 
\end{remark}

\medskip
\noindent
Let us now consider the homotopy colimit of a functor $\mathscr{F} : \mathcal{P} \longrightarrow G\mathscr{S}$, where $\overline{\mathcal{P}}$ is an unaugmented CW poset. Definition \ref{hocolim} describes $\hocolim_{\mathcal{P}} \mathscr{F}$ as a coequalizer
\[  \bigvee_{[k] \rightarrow [l]} \bigvee_{i_0 \rightarrow i_1 \rightarrow \cdots \rightarrow i_l} \mathscr{F}(i_0) \wedge (\Delta_k)_+ \rightrightarrows \bigvee_n \bigvee_{i_0 \rightarrow i_1 \rightarrow \cdots \rightarrow i_n} \mathscr{F}(i_0) \wedge (\Delta_n)_+ \longrightarrow \hocolim_{\mathcal{P}} \mathscr{F}, \]
For $k \geq 1$, consider the full subcategory $\mathcal{P}^k \subseteq \mathcal{P}$ of elements $p \in \mathcal{P}$ such that $|p| < k$. We may restrict $\mathscr{F}$ to a functor $\mathscr{F}^k$ on $\mathcal{P}^k$. It is clear that the face and degeneracy maps preserve the simplicial $G$-spectrum $N_\bullet (\mathscr{F}^k)$, which is a levelwise summand in $N_\bullet(\mathscr{F})$. It follows that one has an increasing filtration $F_k(\hocolim_{\mathcal{P}} (\mathscr{F}))$ of $\hocolim_{\mathcal{P}} \mathscr{F}$ defined for $k \geq 1$, and given by 
\[ F_k(\hocolim_{\mathcal{P}} (\mathscr{F})) := \hocolim_{\mathcal{P}^k} \mathscr{F}^k. \]

\medskip
\noindent
Using the same argument as in \cite{B} (Proposition 3.1), we see that $F_{k+1}(\hocolim_{\mathcal{P}} (\mathscr{F}))$ is obtained inductively, starting with $F_1(\hocolim_{\mathcal{P}} (\mathscr{F})) = \bigvee _{p \in  \mathcal{P}, |p|=0} \mathscr{F}(p) $, and constructing  $F_{k+1}(\hocolim_{\mathcal{P}} (\mathscr{F}))$ as a pushout diagram

\[
\xymatrix{\bigvee_{p \in \mathcal{P}, |p|=k} \mathscr{F}(p) \wedge ({\partial X_{\mathcal{P}_p})}_+,
 \ar[d] \ar[r] &  \bigvee_{p \in  \mathcal{P}, |p|=k} \mathscr{F}(p) \wedge ({X_{\mathcal{P}_p})}_+  \ar[d] \\
  F_k(\hocolim_{\mathcal{P}} (\mathscr{F}))  \ar[r] & F_{k+1}(\hocolim_{\mathcal{P}} (\mathscr{F}))
}
\]

\noindent
where $X_{\mathcal{P}_p}$ is the regular CW complex that corresponds to the subcategory $\mathcal{P}_p \subseteq \mathcal{P}$ of objects over $p$ and $\partial X_{\mathcal{P}_p}$ denotes the subcomplex of $X_{\mathcal{P}_p}$ corresponding to the subcategory $\mathcal{P}_p \cap \mathcal{P}^k$. The top horizontal and left vertical maps are the canonical maps induced by the inclusion of the subcategories $(\mathcal{P}_p \cap \mathcal{P}^k) \subseteq \mathcal{P}_p$ and $(\mathcal{P}_p \cap \mathcal{P}^k) \subseteq \mathcal{P}^k$ respectively. In fact, $X_{\mathcal{P}_p}$ is homeomorphic to a closed $k$-disc, and $\partial X_{\mathcal{P}_p}$ is homeomorphic to its boundary sphere of dimension $k-1$. It follows there is a cofiber sequence of the form
\[ F_k(\hocolim_{\mathcal{P}} (\mathscr{F})) \longrightarrow F_{k+1}(\hocolim_{\mathcal{P}} (\mathscr{F})) \longrightarrow \bigvee_{p \in \mathcal{P}, |p|=k} \Sigma^k \mathscr{F}(p). \]

\medskip
\noindent
We package the above observations as the following useful theorem.

\medskip
\begin{thm} \label{CWP} 
Let $\mathcal{P}$ be a finite poset so that $\overline{\mathcal{P}}$ is an unaugmented CW poset. Then given any functor $\mathscr{F} : \mathcal{P} \longrightarrow G\mathscr{S}$, the homotopy colimit $\hocolim_{\mathcal{P}} \mathscr{F}$ admits an increasing filtration 
\[ F_k(\hocolim_{\mathcal{P}} (\mathscr{F})) := \hocolim_{\mathcal{P}^k} \mathscr{F}^k, \quad \mbox{for $k \geq 1$, \, with} \quad F_1(\hocolim_{\mathcal{P}} (\mathscr{F})) = \bigvee _{p \in  \mathcal{P}, |p|=0} \mathscr{F}(p).\]
Furthermore, one may construct  $F_{k+1}(\hocolim_{\mathcal{P}} (\mathscr{F}))$ inductively as a pushout 
\[
\xymatrix{\bigvee_{p \in \mathcal{P}, |p|=k} \mathscr{F}(p) \wedge ({\partial X_{\mathcal{P}_p})}_+,
 \ar[d] \ar[r] &  \bigvee_{p \in \mathcal{P}, |p|=k} \mathscr{F}(p) \wedge ({X_{\mathcal{P}_p})}_+  \ar[d] \\
  F_k(\hocolim_{\mathcal{P}} (\mathscr{F}))  \ar[r] & F_{k+1}(\hocolim_{\mathcal{P}} (\mathscr{F})) }
\]
where $X_{\mathcal{P}_p}$ is the regular CW complex that corresponds to the subcategory $\mathcal{P}_p \subseteq \mathcal{P}$ of objects over $p$, and $\partial X_{\mathcal{P}_p}$ denotes the subcomplex of $X_{\mathcal{P}_p}$ corresponding to the subcategory $\mathcal{P}_p \cap \mathcal{P}^k$. The top horizontal and left vertical maps are the canonical maps induced by the inclusion of the subcategories $(\mathcal{P}_p \cap \mathcal{P}^k) \subseteq \mathcal{P}_p$ and $(\mathcal{P}_p \cap \mathcal{P}^k) \subseteq \mathcal{P}^k$ respectively. By \cite{B} (Proposition 3.1), $X_{\mathcal{P}_p}$ is homeomorphic to the $k$-disc, with $\partial X_{\mathcal{P}_p}$ being its boundary. In particular, there is a cofiber sequence of the form
\[ F_k(\hocolim_{\mathcal{P}} (\mathscr{F})) \longrightarrow F_{k+1}(\hocolim_{\mathcal{P}} (\mathscr{F})) \longrightarrow \bigvee_{p \in \mathcal{P}, |p|=k} \Sigma^k \mathscr{F}(p). \]
\end{thm}

\noindent
Theorem \ref{CWP} reconstructs the homotopy colimit of a functor $\mathscr{F}$ inductively in the same way that the underlying regular CW complex $X_{\mathcal{P}}$ is constructed by attaching the cells $X_{\mathcal{P}_p}$. The only difference is that now one replaces $X_{\mathcal{P}_p}$ with $\mathscr{F}(p) \wedge ({X_{\mathcal{P}_p})}_+$. Notice that if we were to subdivide the cells of $X_{\mathcal{P}}$ so as to refine it to another regular CW complex $X_{\mathcal{R}}$, where $\mathcal{R}$ is a subdivision of the poset $\mathcal{P}$, one would clearly not change the homotopy colimit of $\mathscr{F}$, provided one extends $\mathcal{F}$ to $\mathcal{R}$ compatibly with the subdivision of posets. On the other hand, the filtration on the homotopy colimit would change as a result of this subdivision. We formally explore this process of subdivision in what follows. 

\smallskip
\begin{defn} \label{refine} (Subdivision of posets, compare \cite{St} Section 7)

\noindent
Let $\mathcal{P}$ and $\mathcal{R}$ be finite posets so that $\overline{\mathcal{P}}$ and $\overline{\mathcal{R}}$ are unaugmented CW posets. We say that $\mathcal{R}$ is a subdivision of $\mathcal{P}$ if there exists a surjective map of posets
\[ \pi : \mathcal{R} \longrightarrow \mathcal{P} \]
with the following property. Given $p \in \mathcal{P}$, let $\mathcal{P}_p$ the subposet of objects over $p$, We demand that the regular CW subcomplex of $X_{\mathcal{R}}$ corresponding to $\pi^{-1}(\mathcal{P}_p)$ is a disc of dimension $|p|$ with boundary being precisely those cells indexed by elements $r \in \pi^{-1} (\mathcal{P}_p)$ such that $|\pi(r)| < |p|$. 
\end{defn}

\begin{remark} 
It is not hard to see that given $\mathcal{P}$ and $\mathcal{R}$ as in definition \ref{refine}, $X_{\mathcal{R}}$ is a subdivision of the regular CW complex $X_{\mathcal{P}}$ (as defined in \cite{LW}), which is also regular. Moreover, under this subdivision, the (open) cells of $X_{\mathcal{R}}$ that belong to the interior of the cell indexed by $p \in \mathcal{P}$ are precisely the ones indexed by the set $\pi^{-1}(p) \subseteq \mathcal{R}$. 
\end{remark}

\smallskip
\begin{example} \label{subex}
Consider the following instructive example of a subdivision that is relevant in the proof of theorem \ref{braid rel}. We take $\mathcal{P}_n$ to be the poset representing the standard regular CW decomposition of the $n$-disc, which we will denote by $X_{\mathcal{P}_{n}}$ with one interior $n$-cell, and $2$-hemispherical $k$-cells for all $0 \leq k < n$. We define $\mathcal{R}_{(n+1)}$ to be the poset given by the join $\bullet$  \textcircled{$\star$} $\mathcal{P}_n$ of the one-element poset $\bullet$ and the poset $\mathcal{P}_n$. By remark \ref{join} we see that $X_{\mathcal{R}_{(n+1)}} $ is a cone on $X_{\mathcal{P}_{n}}$ or its topological join with a point, $pt \star X_{\mathcal{P}_{n}}$. One can describe $X_{\mathcal{R}_{(n+1)}}$ as a subdivision of $X_{\mathcal{P}_{(n+1)}}$ which has the following form. For $0 < k \leq n$, one of the two open $k$-cells of $X_{\mathcal{P}_{(n+1)}}$ gets subdivided into two $k$-cells, by introducing one interior $(k-1)$-cell called the separating cell. The closure relations for this subdivision are encapsulated by a map of posets $\pi : \mathcal{R}_{(n+1)} \longrightarrow \mathcal{P}_{(n+1)}$ that satisfies the conditions given in definition \ref{refine}. We describe such a map in definition \ref{PORS}. 
\end{example}

\medskip
\begin{thm} \label{subdiv} 
Let $\pi : \mathcal{R} \longrightarrow \mathcal{P}$ be a be a subdivision as in definition \ref{refine} and assume that we are given a functor $\mathscr{F} : \mathcal{P} \longrightarrow G\mathscr{S}$. Then there is a map of filtered spectra, which is an equivalence on the global homotopy colimit
\[ \iota: \hocolim_{\mathcal{P}} \mathscr{F} \llra{\simeq} \hocolim_{\mathcal{R}} \pi^\ast \mathscr{F}. \]
Furthermore, on the associated quotients, $\iota$ induces the diagonal map for each $p \in \mathcal{P}$ with $|p| = k$
\[ \iota : \Sigma^k \mathscr{F}(p) \llra{\Delta} \bigvee_{r \in \pi^{-1}(p), |r|=k} \Sigma^k \mathscr{F}(p). \]
\end{thm}
\begin{proof}
The inductive description of $\hocolim_{\mathcal{P}} \mathscr{F}$ in theorem \ref{CWP} shows that $\hocolim_{\mathcal{R}} \pi^\ast \mathscr{F}$ is a refinement of $\hocolim_{\mathcal{P}} \mathscr{F}$ and hence there is a map of filtered spectra, which is an equivalence on the global homotopy colimits
\[ \iota: \hocolim_{\mathcal{P}} \mathscr{F} \llra{\simeq} \hocolim_{\mathcal{R}} \pi^\ast \mathscr{F}. \]
Furthermore, since $\mathcal{R}$ is a subdivision of $\mathcal{P}$, on associated graded object, it follows that $\iota$ is given by a sum of diagonal maps for each $p \in \mathcal{P}$ of dimension $k$
\[ \iota : \Sigma^k \mathscr{F}(p) \longrightarrow \bigvee_{r \in \pi^{-1}(p), |r|=k} \Sigma^k \mathscr{F}(p). \]
\end{proof}

\pagestyle{empty}
\bibliographystyle{amsplain}
\providecommand{\bysame}{\leavevmode\hbox
to3em{\hrulefill}\thinspace}

\end{document}